\renewcommand\l{\lambda}
\newcommand\les{\lesssim}
\newcommand\ges{\gtrsim}
\newcommand{\lan}{\langle}
\newcommand{\ran}{\rangle}
\newcommand\R{\mathbb{R}}
\newcommand\C{\mathbb{C}}
\newcommand\Z{\mathbb{Z}}
\newcommand\N{\mathbb{N}}
\renewcommand\S{\mathbb{S}}
\renewcommand\H{\mathbb{H}}
\newcommand{\calO}{\mathcal O}
\newcommand\W{\mathcal{W}}
\newcommand\la{\langle}
\newcommand\ra{\rangle}
\newtheorem{theo}{Theorem}
\numberwithin{theo}{section} 
\newtheorem{lema}[theo]{Lemma}
\newtheorem{prop}[theo]{Proposition}
\newtheorem*{rema}{Remark}
\numberwithin{equation}{section}
\newcommand{\dHe}{\dot{H}^1_{e}}
\begin{document}
\title{Equivariant Schr\"odinger Maps in two spatial dimensions}

\author{I. Bejenaru} \address{ Department of Mathematics, University
  of Chicago, 5734 S. University Ave, Chicago, IL 60637}
\email{bejenaru@math.uchicago.edu}

\author{A. Ionescu} \address{Department of Mathematics, Princeton
  University, Washington Rd., Princeton, NJ 08540}
\email{aionescu@math.princeton.edu}

\author{C. Kenig} \address{ Department of Mathematics, University of
  Chicago, 5734 S. University Ave, Chicago, IL 60637}
\email{cek@math.uchicago.edu}

\author{D.\ Tataru} \address{Department of Mathematics, The University
  of California at Berkeley, Evans Hall, Berkeley, CA 94720, U.S.A.}
\email{tataru@math.berkeley.edu}

\thanks{I.B. was supported in part by NSF grant DMS-1001676. A. I. was
  partially supported by a Packard Fellowship and NSF grant
  DMS-1065710. C.K. was supported in part by NSF grant DMS-0968742. 
  D.T. was supported in part by the Miller Foundation and by NSF 
grant DMS-0801261}

\begin{abstract} We consider equivariant solutions for the
  Schr\"odinger map problem from $\R^{2+1}$ to $\S^2$ with energy less
  than $4\pi$ and show that they are global in time and scatter.

\end{abstract}

\maketitle

\section{Introduction}

In this article we consider the Schr\"odinger map equation in
$\R^{2+1}$ with values into $\S^2 \subset \R^3$ (the two dimensional
unit sphere in $\R^3$),
\begin{equation}
  u_t = u \times \Delta u, \qquad u(0) = u_0
  \label{SM}\end{equation}

This equation admits a conserved energy,
\[
E(u) = \frac12 \int_{\R^2} |\nabla u|^2 dx
\]
and is invariant with respect to the dimensionless scaling
\[
u(t,x) \to u(\lambda^2 t, \lambda x).
\]
The energy is invariant with respect to the above scaling, therefore
the Schr\"odinger map equation in $\R^{2+1}$ is {\em energy critical}.

The local theory for classical data was established in \cite{SuSuBa}
and \cite{Ga}. We will use the following

\begin{theo}[McGahagan, \cite{Ga}] \label{clasic} If $u_0 \in \dot H^1 \cap \dot
  H^3$ then there exists a time $T>0$, such that \eqref{SM} has a
  unique solution in $L^\infty_t \dot ([0,T]:\dot H^1 \cap \dot H^3)$.
\end{theo}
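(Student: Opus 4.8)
The plan is to construct the solution by a vanishing-viscosity (Landau--Lifshitz--Gilbert) regularization, to establish a priori bounds that are uniform in the viscosity parameter via a geometric energy method, and then to pass to the limit. Throughout I would write the equation in its intrinsic form $\partial_t u = J(u)\,\tau(u)$, where $J(u) = u\times(\cdot)$ is the complex structure on $T_u\S^2$ and $\tau(u) = \Delta u + |\nabla u|^2 u = \sum_j D_j\partial_j u$ is the tension field, $D$ denoting the pullback Levi--Civita connection along $u$. The structural facts I would lean on are that $J$ is covariantly constant, $DJ=0$ (the target is K\"ahler), that $J$ is pointwise antisymmetric, $\langle JX,Y\rangle = -\langle X,JY\rangle$, so $\langle JZ,Z\rangle = 0$ for every tangent $Z$, and that $\S^2$ has constant curvature, $R(X,Y)Z = \langle Y,Z\rangle X - \langle X,Z\rangle Y$.

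First I would regularize. For $\e>0$ consider $\partial_t u_\e = \e\,\tau(u_\e) + J(u_\e)\tau(u_\e)$ with the same data, which in coordinates is $\partial_t u_\e = \e(\Delta u_\e + |\nabla u_\e|^2 u_\e) + u_\e\times\Delta u_\e$. The principal symbol is $(\e + iJ)|\xi|^2$, which has real part $\e|\xi|^2>0$, so the system is parabolic; for data in $\dot H^1\cap\dot H^3$ it has a unique local solution by the standard contraction argument, and one checks directly that the constraint $|u_\e|\equiv 1$ is preserved (test against $u_\e$ and use $\tau\perp u_\e$, $J\tau\perp u_\e$). The energy identity then reads $\frac{d}{dt}E(u_\e) = -\e\int|\tau(u_\e)|^2 \le 0$, which supplies a uniform $\dot H^1$ bound at once.

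The heart of the matter is a higher-order estimate uniform in $\e$. Setting $\phi_j = \partial_j u_\e$, torsion-freeness gives $D_t\phi_j = D_j\partial_t u_\e$, and commuting covariant derivatives (producing only curvature terms) yields the covariant Schr\"odinger equation $D_t\phi = J\,\Delta_{\mathrm{cov}}\phi + (\text{lower order})$, with $\Delta_{\mathrm{cov}} = \sum_l D_lD_l$. Applying $D^{(k)}$ for $0\le k\le 2$ and using metric compatibility, the worst term in $\frac{d}{dt}\tfrac12\sum\int|D^{(k)}\phi|^2$ is $\int\langle J\,\Delta_{\mathrm{cov}}W,\,W\rangle$ with $W = D^{(k)}\phi$; integrating by parts and invoking $DJ=0$ turns this into $-\sum_l\int\langle J\,D_lW,\,D_lW\rangle$, which vanishes identically by the antisymmetry of $J$. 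Thus the threatened loss of a derivative is averted by the geometry. The remaining contributions — the curvature terms from the commutators, the gap between covariant and flat derivatives, and the viscous term (whose top order is $-\e\|D W\|_{L^2}^2\le 0$, hence favorable) — are all lower order and bounded by the energy $\mathcal{E}_k$ times a polynomial in the lower Sobolev norms. A Gronwall argument then produces a time $T>0$, independent of $\e$, on which $\sup_{[0,T]}\|u_\e\|_{\dot H^1\cap\dot H^3}$ is bounded uniformly.

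With uniform bounds in hand I would pass to the limit: the equation controls $\partial_t u_\e$ in a lower norm, so by Aubin--Lions a subsequence converges, strongly enough in intermediate topologies to identify the limit $u$ as a solution of $\partial_t u = u\times\Delta u$ lying in $L^\infty_t(\dot H^1\cap\dot H^3)$ with $|u|\equiv 1$. Uniqueness follows from an $L^2$-type estimate for the extrinsic difference $v = u^{(1)}-u^{(2)}$ of two solutions; since both are smooth, every commutator and curvature term is controlled and Gronwall forces $v\equiv 0$. I expect the genuine obstacle to be the no-derivative-loss estimate of the third paragraph: unlike for parabolic or hyperbolic flows, the Schr\"odinger evolution gives no smoothing in $L^\infty_t H^s$, so one must extract the top-order cancellation \emph{exactly}, and the careful bookkeeping of the surviving covariant commutators and curvature terms — together with verifying that the viscous term contributes only a favorable sign at top order — is where the real work lies.
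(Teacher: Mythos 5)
Your existence scheme is essentially sound, but first a remark on context: the paper does not prove this theorem at all --- it is quoted from McGahagan's work \cite{Ga} --- so the comparison here is with the cited proof rather than with an argument in the paper. For the existence half, your parabolic (Landau--Lifshitz--Gilbert) regularization with covariant energy estimates is a legitimate route, and the cancellation you isolate, namely $\int\langle J\,D_lW,D_lW\rangle=0$ coming from $DJ=0$ and the antisymmetry of $J$, is exactly the mechanism that makes high-regularity energy estimates close for Schr\"odinger maps; the viscous term contributes $-\e\|DW\|_{L^2}^2$ plus $O(\e)$ commutator terms that it can absorb. (McGahagan's own scheme is different in flavor --- a wave-map--type, hyperbolic regularization of the form $\e D_t\partial_t u+\partial_t u=J(u)\tau(u)$ --- but that choice is not essential for the existence part.)

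The genuine gap is uniqueness. Your claim that an $L^2$ estimate for the extrinsic difference $v=u^{(1)}-u^{(2)}$ closes under Gronwall is false, and not for bookkeeping reasons. The difference solves $\partial_t v=u^{(1)}\times\Delta v+v\times\Delta u^{(2)}$; pairing with $v$, the second term vanishes pointwise (scalar triple product with a repeated entry), and after one integration by parts so does $\int\langle u^{(1)}\times\partial_jv,\partial_jv\rangle$, but you are left with $-\int\langle\partial_ju^{(1)}\times\partial_jv,\,v\rangle$, which is bounded only by $\|\nabla u^{(1)}\|_{L^\infty}\|\nabla v\|_{L^2}\|v\|_{L^2}$: one derivative too many on $v$. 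Smoothness of the two solutions does not repair this. Interpolation gives $\|\nabla v\|_{L^2}\lesssim\|v\|_{L^2}^{1-1/k}\|v\|_{\dot H^k}^{1/k}$, so the best one gets is $\frac{d}{dt}\|v\|_{L^2}^2\lesssim\|v\|_{L^2}^{2-1/k}$, a non-Lipschitz, non-Osgood power for which $y(0)=0$ admits nonzero solutions ($y\sim t^{2k}$); alternatively, rewriting $\partial_ju^{(1)}\times\partial_jv=-\partial_ju^{(1)}\times\partial_ju^{(2)}$ removes the derivative from $v$ but also removes every factor of the difference, so Gronwall has nothing to act on. The same loss recurs at every regularity level of the difference, because the equation is quasilinear with a derivative nonlinearity. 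This is precisely the known obstruction, and it is where the real work in McGahagan's paper lies: she measures the discrepancy intrinsically, comparing $\nabla u^{(1)}$ with the parallel transport of $\nabla u^{(2)}$ along geodesics joining $u^{(2)}(x)$ to $u^{(1)}(x)$, so that the dangerous term reappears as a curvature/commutator term carrying a factor of $|v|$ rather than $|\nabla v|$, and an energy of the form $\|v\|_{L^2}^2+\|\nabla u^{(1)}-P\,\nabla u^{(2)}\|_{L^2}^2$ does close. (An alternative is to transfer uniqueness from a gauged formulation, which is in effect what the present paper does at the level of the system \eqref{psieq}.) Without some such device, your final paragraph is an assertion, not a proof.
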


The local and global in time of the Schr\"odinger map problem with
small data has been intensely studied, see \cite{Be}, \cite{Be2},
\cite{bik}, \cite{BIKT}, \cite{csu}, \cite{IoKe2}, \cite{IoKe3}.  The
state of the art result for the problem with small data was established by
the authors in \cite{BIKT} where they proved that classical solutions
(and in fact rough solutions too) with small energy are global in
time. More recently Smith established, in \cite{Sm1} and \cite{Sm2}, 
the global existence  in two dimensions of smooth Schr\"odinger maps with energy-dispersed data 
(essentially $\| u_0 \|_{\dot B^{1}_{2,\infty}} \ll 1$)
and satisfying $E(u) < 4\pi$, see the
commentaries below for the relevance of this threshold.

To understand the large data problem, one needs to describe the
solitons for \eqref{SM}. The solitons for this problem are the
harmonic maps, which are solutions to $u \times \Delta u=0$. The
harmonic maps cannot have arbitrary energy.  The trivial solitons are
of the form $u=Q$ for some $Q \in \S^2$ and their energy is $0$. The
next energy level admissible for solitons is $4 \pi$.

In this article we confine ourselves to the class of {\em equivariant}
Schr\"odinger maps.  These are indexed by an integer $m$ called the
equivariance class, and consist of maps of the form
\begin{equation} \label{equiv} u(r,\theta) = e^{m \theta R} \bar{u}(r)
\end{equation} 
Here $R$ is the generator of horizontal rotations, which can be
interpreted as a matrix or, equivalently, as the operator below
\[
R = \left( \begin{array}{ccc} 0 & -1 & 0 \\ 1 & 0 & 0 \\ 0 & 0 &
    0 \end{array} \right) , \qquad R u = \overrightarrow{k} \times u.
\]
Here and thereafter we denote by $\overrightarrow{i},
\overrightarrow{j}, \overrightarrow{k}$ the standard orthonormal basis
in $\R^3$, i.e. the vectors with coordinate representation $(1,0,0),
(0,1,0)$ respectively $(0,0,1)$. The case $m=0$ corresponds to radial
symmetry.

The energy for equivariant maps takes the following form:
\begin{equation} \label{energy} E(u) = \pi \int_{0}^\infty \left(
    |\partial_r \bar{u}(r)|^2 +
    \frac{m^2}{r^2}(\bar{u}_1^2(r)+\bar{u}_2^2(r)) \right) r dr
\end{equation}
If $m \ne 0$, then $E(u) < \infty$ implies that $u_1$ and $u_2$ have
limit zero as $r \rightarrow 0$ and $r \rightarrow \infty$.  For a
proof see for instance \cite{gkt1}. Due to the restriction on the size
of the energy, this implies that $u_3(t,r)$ has the same limit $+1$ or
$-1$ both as $r \rightarrow 0$ and $r \rightarrow \infty$, see
\cite{gkt1} or the commentaries in subsection \ref{COG}. To fix
matters we agree that this limit is $-1$ for all $t$ (the fact that
$u_3(0,t)$ or $u_3(\infty,t)$ cannot jump in time from $-1$ to $+1$ is
justified in subsection \ref{COG}).

The global regularity question in the case $m=0$, corresponding to
radial symmetry, has been considered recently by Gustafson and Koo,
see \cite{gk}. In this paper we consider the case $m=1$. More
precisely, our main result is the following:

\begin{theo} \label{MT} i) Let $u_0 \in \dot H^1
  \cap \dot H^3$ be a $1$-equivariant function satisfying $E(u_0) < 4
  \pi$.  Then \eqref{SM} has a unique global in time solution $u \in
  L^\infty(\R:\dot H^1 \cap \dot H^3)$.  In addition $\nabla u$, expressed in a
  Coulomb frame, scatters to the free solution of a suitable
  linear Schr\"odinger equation.
  
  ii) The above solution is Lipschitz continuous with respect to the initial
  data in $\dot H^1$. In particular if $u_0 \in \dot H^1$ 
  is a $1$-equivariant function satisfying $E(u_0) < 4
  \pi$ then  there is a global solution $u(t) \in L^\infty \dot H^1$ defined as the unique 
  limit of smooth solutions in $\dot H^1 \cap \dot H^3$.  
\end{theo}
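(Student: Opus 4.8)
The plan is to pass from the geometric equation \eqref{SM} to a scalar nonlinear Schr\"odinger equation via a good choice of moving frame, to convert the hypothesis $E(u_0)<4\pi$ into uniform a priori control ruling out energy concentration, and then to bootstrap this control into global existence, scattering, and Lipschitz dependence. First I would attach to the map $u$ an orthonormal frame $(v,w)$ for the tangent spaces $T_u\S^2$, normalized in the Coulomb gauge so that the associated connection coefficient is divergence free. Using $1$-equivariance \eqref{equiv} to reduce to a single radial spatial variable, I would write the differentiated field $\psi=\langle\partial_r\bar u,v\rangle+i\langle\partial_r\bar u,w\rangle$; a computation based on \eqref{SM} and on the harmonic-map equation $u\times\Delta u=0$ shows that $\psi$ solves a cubic equation of the schematic form $i\partial_t\psi=-\Delta_m\psi+V\psi+\mathcal N(\psi)$, where $\Delta_m$ is the radial Laplacian carrying the $m^2/r^2$ centrifugal term forced by \eqref{equiv} and \eqref{energy}, $V$ is a potential built from $u_3$ and the gauge, and $\mathcal N$ is cubic in $\psi$. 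In these variables the conserved energy \eqref{energy} is, up to constants, $\|\psi(t)\|_{L^2}^2$, and the soliton $Q$ appears as an explicit stationary profile of energy exactly $4\pi$.

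Energy conservation already gives the uniform bound $\|\psi(t)\|_{L^2}^2=2E(u_0)<8\pi$ for all $t$, but $L^2$ control alone does not yield global dispersive bounds; the genuine role of the sub-threshold hypothesis is to prevent the energy from concentrating into a harmonic-map bubble, the smallest of which carries energy exactly $4\pi$. I would therefore establish a quantitative no-concentration estimate from $E(u_0)<4\pi$: the evolution stays uniformly away, in the relevant $\dot H^1$ metric, from the family of harmonic maps, so no $4\pi$ bubble can split off. This no-concentration property is exactly the substitute for the smallness hypothesis used in the small-data theory of \cite{BIKT}.

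With this a priori control and the local theory of Theorem \ref{clasic}, I would run a continuation/bootstrap argument in function spaces adapted to the magnetic evolution of $-\Delta_m+V$ (Strichartz together with local smoothing, organized through frequency envelopes), aiming at a global finite bound on a master dispersive norm of $\psi$. Since $\mathcal N$ is cubic and the linear flow disperses, finiteness of this norm both continues the solution to all of $\R$ and forces the Duhamel tail to converge, giving scattering of $\nabla u$ in the Coulomb frame; this proves part i). For part ii) I would linearize around a fixed solution and estimate the difference of two solutions directly at the $\dot H^1$ level, using the global dispersive bounds already obtained to close the difference estimate and thereby obtaining the Lipschitz bound. Since $\dot H^1\cap\dot H^3$ data are dense in the sub-threshold $1$-equivariant subset of $\dot H^1$, the solution map then extends continuously and defines the $\dot H^1$ solution as the unique limit of smooth ones.

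The hard part will be closing the global bootstrap, that is, propagating the master dispersive norm using only the sub-threshold energy. This demands both the sharp no-concentration estimate near the critical threshold $4\pi$ and a delicate dispersive analysis of $-\Delta_m+V$, whose estimates degenerate near $r=0$ relative to the free Schr\"odinger flow and which must be controlled uniformly in the (weak but nontrivial) interaction between $\psi$ and the soliton profile $Q$.
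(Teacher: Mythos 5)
Your frame/gauge reduction is broadly consistent with the paper's Section \ref{seccoulomb} (Coulomb gauge, cubic nonlinearity with no derivative terms, mass of the gauge field equal to $E(u)/\pi$), but the heart of your argument has a genuine gap: the ``quantitative no-concentration estimate'' plus the global bootstrap of a master dispersive norm is not a proof method here, it is a restatement of the difficulty. For a mass-critical equation with large data, finiteness of sub-threshold energy does not by itself give any quantitative dispersive control, and there is no known way to close a direct frequency-envelope bootstrap without a smallness or energy-dispersion hypothesis (this is exactly why Smith's results \cite{Sm1}, \cite{Sm2} require $\|u_0\|_{\dot B^1_{2,\infty}}\ll 1$ in addition to $E<4\pi$). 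Saying that the flow ``stays uniformly away from harmonic maps in $\dot H^1$'' is either a soft energy statement (true but useless: it does not preclude concentration of part of the energy at a point, nor non-bubbling blowup scenarios, which must themselves be ruled out) or it already presupposes the compactness/rigidity machinery you are trying to avoid. The paper's route is the Kenig--Merle scheme: define the critical mass $m_0$ via the $L^4$ Strichartz norm of the gauge system \eqref{psieq}, run a profile decomposition adapted to equivariance \emph{and} to the geometric compatibility constraint \eqref{compnew} (this is where the reconstruction Lemma \ref{a2p2c} is needed, so that profiles of $\psi^-$ generate compatible profiles of $\psi^+$) to produce a minimal-mass critical element with precompact orbit (Theorems \ref{cc}, \ref{lambdacontrol}), and then kill the critical element with virial/momentum identities (Theorem \ref{lack}). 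The hypothesis $E(u_0)<4\pi$ enters not as a distance-to-solitons bound but through the pointwise sign \eqref{A2neg}, $\sup_r A_2(r)<0$ when $\|\psi^\pm\|_{L^2}<2$, which makes the main virial term coercive; this is also precisely why the paper's argument stops at $4\pi$ rather than the conjectured $8\pi$.

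Two further points. First, you work with a single field $\psi$ and an operator $-\Delta_m$ carrying the centrifugal potential; the paper's decisive algebraic step is to pass to the pair $\psi^\pm=\psi_1\mp i\psi_2/r$, which decouples the linear part into $H^+=\Delta$ and $H^-=\Delta-4/r^2$ and makes the Strichartz/concentration-compactness analysis possible; the coupled $(\psi_1,\psi_2/r)$ system does not have this structure. Second, for part ii) your plan to estimate differences of two solutions ``directly at the $\dot H^1$ level'' of the map runs into the well-known derivative loss for Schr\"odinger maps at energy regularity; the paper instead factors the Lipschitz dependence through the gauge: $u\mapsto\psi^\pm$ is Lipschitz from $\dot H^1$ to $L^2$ by \eqref{dpsi}, the flow of \eqref{psieq} is Lipschitz in $L^2$ by the Cauchy theory of Theorem \ref{CT}, and the reconstruction $\psi^-\mapsto u$ is Lipschitz back into $\dot H^1$ by \eqref{udif}, so no difference estimate on the map equation itself is ever needed.
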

The statement of the scattering cannot be made precise at this
time. We need to introduce a moving frame on $\S^2$, write the
equation of the coordinates of $\nabla u$ in that frame and identify
there the linear part of the Schr\"odinger equation. This will be
carried out in Section \ref{seccoulomb}.

The result in Theorem \ref{MT} is sharp from the following point of
view. Maps with energy less than $4\pi$ have topological degree $0$,
and cannot cover the full sphere. However, for energies 
greater than or equal to  $4\pi$ there are also maps with topological degree $1$
which fully cover the sphere. The solitons have least energy, i.e. $4\pi$,
among such maps, and all  degree one maps with energy slightly 
larger than $4\pi$ must stay close to the soliton family. 
The global in time behavior of such maps was first studied 
by  Bejenaru and Tataru~\cite{BT-SSM}, who proved that the solitons are unstable, but 
global solutions always exist for equivariant data which are small localized 
perturbations of solitons. Later Merle, Raphael and Rodnianski~\cite{MRR}
produced examples of equivariant data which are somewhat less localized  
soliton perturbations, which still have energy slightly larger than $4 \pi$  and for which the solutions blow up in finite time.
One should note that the situation changes for larger $m$, namely $m \geq 3$;
in that case  Gustafson and collaborators, see \cite{gkt1},
\cite{gkt} and \cite{gnt} have established the stability of the corresponding
solitons in the $m$-equivariant class.

However, if one restricts attention to maps to $1$-equivariant maps
whose topological degree is $0$, then such maps cannot cover the
sphere if their energy is below $8 \pi$. Hence it is natural to
conjecture that for degree zero maps the above result should be valid
up to an $8\pi$ energy. This is why most of our arguments are
written for the $8\pi$ threshold. While going above $4\pi$
is not very difficult, extending the proof up to 
$8\pi$ requires some new idea; our present argument fails
 due to the lack of sign for a term in the virial identities.

Our result in Theorem \ref{MT} extends to all other $m \ne
0$. Moreover we expect that it extends to the case when the target
manifold is the hyperbolic space $\H^2$ in which case no restriction
on the data is needed due to the absence of nontrivial solitons.

\subsection{Definitions and notations.}
\label{defnot}

While at fixed time our maps into the sphere are functions defined on
$\R^2$, the equivariance condition allows us to reduce our analysis to
functions of a single variable $|x|=r \in [0,\infty)$.  One such
instance is exhibited in \eqref{equiv} where to each equivariant map
$u$ we naturally associate its radial component $\bar u$.  Some other
functions will turn out to be radial by definition, see, for instance,
all the gauge elements in Section \ref{seccoulomb}.  We agree to
identify such radial functions with the corresponding one dimensional
functions of $r$.  Some of these functions are complex valued, and
this convention allows us to use the bar notation with the standard
meaning, i.e. the complex conjugate.

Even though we work mainly with functions of a single spatial variable
$r$, they originate in two dimensions. Therefore, it is natural to
make the convention that for the one dimensional functions all the
Lebesgue integrals and spaces are with respect to the $rdr$ measure,
unless otherwise specified.
 
Since equivariant functions are easily reduced to their
one-dimensional companions via \eqref{equiv}, we introduce the one
dimensional equivariant version of $\dot H^1$,
\begin{equation}\label{defhe}
  \| f \|_{\dHe}^2 = \| \partial_r f\|_{L^2(rdr)}^2 + \| r^{-1} f \|_{L^2(rdr)}^2.
\end{equation}
This is natural since for functions $u: \R^2 \to \R^2$ with
$u(r,\theta) = e^{\theta R} \bar u(r)$ (here $R u = \overrightarrow{k}
\times u$ or, as a matrix, it is the upper left $2 \times 2$ block of
the original matrix $R$) we have
\[
\| u \|_{\dot H^1}= (2\pi)^\frac12 \| \bar u \|_{\dHe}.
\]
It is important to note that functions in $\dHe$ enjoy the following
properties: they are continuous and have limit $0$ both at $r=0$ and
$r=\infty$, see \cite{gkt1} for a proof.

We introduce $\dot H_e^{-1}$ as the dual space to $\dHe$ with respect
to the $L^2$ pairing, i.e.
\[
\| f \|_{\dot H^{-1}_e} = \sup_{\| \phi \|_{\dHe}=1} \lan f,\phi \ran
\]
We will mostly be interested in elements from $\dot H^{-1}_e$ 
of the form $\partial_r g$ or $\frac{g}r$ with $g \in L^2$.

Three operators which are often used on radial functions are
$[\partial_r]^{-1}, [r^{-1} \bar \partial_r]^{-1}$ and
$[r \partial_r]^{-1}$ defined as
\[
\begin{split}
  [\partial_r]^{-1} f(r) & = - \int_{r}^\infty f(s) ds, \quad  [r^{-1}\bar \partial_r]^{-1} f(r) = \int_{0}^r f(s) sds \\
  [r \partial_r]^{-1}f(r) & = - \int_{r}^\infty \frac{1}s f(s) ds
\end{split}
\]
A direct argument shows that
\begin{equation} \label{rdrm}
  \begin{split}
    & \| [r\partial_r]^{-1}f \|_{L^p} \lesssim_p \| f \|_{L^p}, \qquad 1 \leq p < \infty, \\
    & \| r^{-2} [r^{-1} \bar \partial_r]^{-1}f \|_{L^p} \lesssim_p \| f \|_{L^p}, \qquad 1 < p \leq \infty, \\
    & \| [\partial_r]^{-1}f \|_{L^2} \lesssim \| f \|_{L^1}.
  \end{split}
\end{equation}

The equivariance properties of the functions involved in this paper
require that the two-dimensional Fourier calculus is replaced by the
Hankel calculus for one-dimensional functions which we recall below.

For $k \geq 0$ integer, let $J_k$ be the Bessel function of the first
kind,
\[
J_k(r)= \frac1\pi \int_0^\pi \cos(n\tau-r\sin\tau) d\tau
\]
If $H_k=\partial_r^2 + \frac1r \partial_r - \frac{k^2}{r^2}$, then
$J_k$ solves $H_k J_k = -J_k$.

We recall some formulas involving Bessel functions
\begin{equation} \label{derB}
  \partial_r J_k = \frac12 (J_{k-1}-J_{k+1}), \quad 
  (r^{-1}\partial_r)^m \left( \frac{J_k}{r^k} \right) = (-1)^m \frac{J_{k+m}}{r^{k+m}}, 
\end{equation}
where $J_{-k}=(-1)^k J_k$.

For each $k \geq 0$ integer one defines the Hankel transform $\mathcal
F_k$ by
\[
\mathcal F_k f (\xi) = \int_0^\infty J_k(r \xi) f(r) rdr
\]
The inversion formula holds true
\[
f(r) = \int_0^\infty J_k(r\xi) \mathcal F_k f(\xi) \xi d\xi
\]
The Plancherel formula holds true, hence in particular, the Hankel
transform is an isometry.

For a radial function $f$ and for an integer $k$ we define its
two-dimensional extension
\begin{equation} \label{Ddef} R_k(r,\theta)=e^{ik\theta} f(r)
\end{equation}
If $f \in L^2$ then $R_k f \in L^2$; if $R_k f$ has additional
regularity, this is easily red in terms of $\mathcal F_k f$.  Indeed
for any $s \geq 0$ integer the following holds true
\begin{equation} \label{Freg} R_k f \in \dot H^s \Leftrightarrow \xi^s
  \mathcal F_k f \in L^2
\end{equation}
For even values of $s$ this is a consequence of $\Delta R_k f= R_k H_k
f$, while for odd values of $s$ it follows by interpolation.

By direct computation, we also have that for $k \ne 0$,
\begin{equation} \label{Frg1} R_k f \in \dot H^1 \Leftrightarrow f \in
  \dHe, \qquad R_0 f \in \dot H^1 \Leftrightarrow \partial_r f \in
  L^2.
\end{equation}

We will use the following result
\begin{lema} \label{LBE}

  i) If $f \in L^2$ is such that $H_0 f \in L^2$, then the following
  holds true
  \[
  \|\partial_r^2 f \|_{L^2} + \| \frac{\partial_r f}{r} \|_{L^2} \les \| H_0
  f \|_{L^2}
  \]
  ii) If $f \in L^2$ is such that $H_2 f \in L^2$, then the following
  holds true
  \[
  \|\partial_r^2 f\|_{L^2}
 + \| \frac{\partial_r f}{r}\|_{L^2}+ \|\frac{f}{r^2}
  \|_{L^2} \les \| H_2 f \|_{L^2}
  \]
\end{lema}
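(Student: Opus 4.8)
The plan is to deduce both estimates from standard $L^2$ elliptic regularity for the flat Laplacian on $\R^2$, transported to the radial setting through the two-dimensional extension $R_k f = e^{ik\theta}f(r)$ introduced in \eqref{Ddef}. The key is the identity $\Delta R_k f = R_k H_k f$ recorded after \eqref{Freg}: taking $k=0$ in part i) and $k=2$ in part ii), the hypothesis $f\in L^2$ gives $R_k f \in L^2(\R^2)$ (with $\|R_k f\|_{L^2(\R^2)}^2 = 2\pi\|f\|_{L^2(rdr)}^2$), while $H_k f \in L^2$ gives $\Delta(R_k f) = R_k H_k f \in L^2(\R^2)$, of $L^2(\R^2)$ norm equal to $(2\pi)^{1/2}\|H_k f\|_{L^2}$.

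First I would record the flat second-derivative bound: if $u \in L^2(\R^2)$ and $\Delta u \in L^2(\R^2)$, then $u \in \dot H^2(\R^2)$ and in fact $\|D^2 u\|_{L^2(\R^2)} = \|\Delta u\|_{L^2(\R^2)}$. This is immediate on the Fourier side, since $\widehat{\partial_i \partial_j u}(\xi) = -\xi_i\xi_j \hat u(\xi)$ and $\sum_{i,j}|\xi_i \xi_j|^2 = |\xi|^4 = |\widehat{\Delta u}(\xi)|^2$ pointwise; the hypotheses guarantee $\int |\xi|^4 |\hat u|^2 < \infty$, so the full Hessian lies in $L^2$. (Only the inequality $\|D^2u\|_{L^2}\les\|\Delta u\|_{L^2}$ is needed below.) Applying this to $u = R_k f$ reduces matters to expressing $\|D^2(R_k f)\|_{L^2(\R^2)}$ in terms of the one-dimensional quantities on the left-hand sides.

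Next I would compute the Hessian of $u=R_k f$ in the orthonormal polar frame $(e_r,e_\theta)$. A direct computation gives the three independent components
\[
H_{rr}=e^{ik\theta}\,\partial_r^2 f,\qquad
H_{r\theta}=ik\,e^{ik\theta}\Big(\frac{\partial_r f}{r}-\frac{f}{r^2}\Big),\qquad
H_{\theta\theta}=e^{ik\theta}\Big(\frac{\partial_r f}{r}-\frac{k^2 f}{r^2}\Big),
\]
whose trace is indeed $R_k H_k f=\Delta u$. Since $|D^2 u|^2=|H_{rr}|^2+2|H_{r\theta}|^2+|H_{\theta\theta}|^2$ and the angular integration contributes a harmless factor $2\pi$, we obtain
\[
\|D^2 u\|_{L^2(\R^2)}^2=2\pi\Big(\big\|\partial_r^2 f\big\|_{L^2}^2+2k^2\big\|\tfrac{\partial_r f}{r}-\tfrac{f}{r^2}\big\|_{L^2}^2+\big\|\tfrac{\partial_r f}{r}-\tfrac{k^2 f}{r^2}\big\|_{L^2}^2\Big).
\]

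For part i) ($k=0$) the middle term drops and the right side is exactly $2\pi(\|\partial_r^2 f\|_{L^2}^2+\|\tfrac{\partial_r f}{r}\|_{L^2}^2)$, so the claim follows at once from $\|D^2 u\|_{L^2}\les\|\Delta u\|_{L^2}=(2\pi)^{1/2}\|H_0 f\|_{L^2}$. For part ii) ($k=2$) the same bound controls $\|\partial_r^2 f\|_{L^2}$ together with $\|a-b\|_{L^2}$ and $\|a-4b\|_{L^2}$, where $a=\tfrac{\partial_r f}{r}$ and $b=\tfrac{f}{r^2}$. The only genuine point is then a trivial piece of linear algebra: the matrix $\left(\begin{smallmatrix}1&-1\\1&-4\end{smallmatrix}\right)$ sending $(a,b)$ to $(a-b,\,a-4b)$ has nonzero determinant, so pointwise in $r$ one has $|a|^2+|b|^2\les|a-b|^2+|a-4b|^2$; integrating against $rdr$ recovers control of $\|\tfrac{\partial_r f}{r}\|_{L^2}$ and $\|\tfrac{f}{r^2}\|_{L^2}$, completing the proof. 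The main things to watch are precisely this disentangling step in part ii) and the (routine) justification that $R_kf\in\dot H^2(\R^2)$, so that the Hessian computation and the Plancherel identity are legitimate; the latter is exactly what the Fourier argument above supplies.
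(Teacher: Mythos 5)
Your proof is correct, but it takes a genuinely different route from the paper's. The paper works on the Hankel-transform side: using the Bessel recurrence \eqref{derB} it writes $\partial_r f = g_1 - g_3$, where $g_1,g_3$ have Hankel transforms of orders $1$ and $3$ equal to $\pm\frac12 \xi \mathcal F_2 f$, so that $R_1 g_1, R_3 g_3 \in \dot H^1$ with norm controlled by $\|\xi^2 \mathcal F_2 f\|_{L^2} \approx \|H_2 f\|_{L^2}$; this gives $\partial_r f \in \dHe$, hence via \eqref{Frg1} the bounds on $\partial_r^2 f$ and $\frac1r \partial_r f$, and finally $\frac{f}{r^2}$ is recovered from the equation $\frac{4f}{r^2} = (\partial_r^2 + \frac1r\partial_r) f - H_2 f$. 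You instead stay on the physical side: the Plancherel identity $\|D^2 u\|_{L^2(\R^2)} = \|\Delta u\|_{L^2(\R^2)}$ applied to $u = R_k f$, the polar orthonormal-frame decomposition of the Hessian, and then a $2\times 2$ linear-algebra step to separate $\frac{\partial_r f}{r}$ from $\frac{f}{r^2}$. Both arguments hinge on the same intertwining $\Delta R_k f = R_k H_k f$, but the executions are distinct, and each has its merits. Yours is more elementary and self-contained (no Bessel identities needed), yields clean constants, and makes transparent exactly where the equivariance class enters: your matrix $(a,b) \mapsto (a-b,\,a-k^2 b)$ is invertible precisely when $k^2 \neq 1$, which matches the fact that the bound on $\frac{f}{r^2}$ genuinely fails for $k = 1$ (e.g. $f(r) = r\chi(r)$ with $\chi$ smooth and cut off, for which $R_1 f$ is smooth and compactly supported yet $\frac{f}{r^2} \notin L^2(rdr)$). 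The paper's Bessel decomposition, on the other hand, is exactly the mechanism reused in Lemma \ref{LB} to prove the refined Strichartz estimates, where the relevant norms are mixed $L^p_t L^q_r$ and a Plancherel-based Hessian argument is unavailable; that uniformity across the $L^2$ and dispersive settings is what the paper's approach buys.
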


\begin{proof} i) The proof follows the same lines as the one in ii),
  though it is easier.

  ii) Based on the representation formula
  \[
  f = \int J_2(r\xi) \mathcal F_2 f (\xi) \xi d\xi
  \]
  and by using \eqref{derB}, we compute
  \[
  \partial_r f = \frac12 \int (J_1(r\xi) ) - J_3(r\xi))\xi \mathcal
  F_2 f_0^- (\xi) \xi d\xi =g_1 - g_3
  \]
  with $R_1 g_1, R_3 g_3 \in \dot H^1$, hence $\partial_r f \in
  \dHe$. Using \eqref{Frg1} we obtain the conclusion for $\partial_r^2
  f$ and $\frac1r \partial_r f$. Since the estimate holds true for $H_2
  f$, it follows for $\frac{1}{r^2} f$ too.
\end{proof}

\section{The Coulomb gauge representation of the equation}
\label{seccoulomb}

In this section we rewrite the Schr\"odinger map equation for
equivariant solutions in a gauge form.  This approach originates in
the work of Chang, Shatah, Uhlenbeck~\cite{csu}. However, our analysis
is closer to the one in \cite{bik} and \cite{BT-SSM}.

\subsection{The Coulomb gauge} \label{CG}

The computations below are at the formal level as we are not yet
concerned with the regularity of the terms involved in writing various
identities and equations. Implicitly we use only the information $u
\in \dot H^1$. In subsection \ref{sreg} we prove that if $u \in \dot
H^3$ then all the gauge elements, their compatibility relations and
the equations they obey are meaningful in the sense that they involve
terms which are at least at the level of $L^2$.

We let the differentiation operators
$\partial_0,\partial_1,\partial_2$ stand for
$\partial_t, \partial_r, \partial_\theta$ respectively. Our strategy
will be to replace the equation for the Schr\"odinger map $u$ with
equations for its derivatives $\partial_1 u$, $\partial_2 u$ expressed
in an orthonormal frame $v,w \in T_u \S^2$. We choose $v \in T_u \S^2$
such that $v \cdot v=1$ and define $w = u \times v \in T_u \S^2$; to
summarize
\begin{equation}
  v \cdot v=1, \qquad v \cdot u =0, \qquad w=v \times w
\end{equation}
From this, we obtain
\begin{equation}
  w \cdot v=0, \qquad w \cdot w=1, \qquad v \times w= u, \qquad w \times u=v
\end{equation}
Since $u$ is $1$-equivariant it is natural to work with
$1$-equivariant frames, i.e.
\[
v = e^{\theta R} \bar{v}(r), \qquad w = e^{ \theta R} \bar{w}(r).
\]
where $\bar v, \bar w$ (as well as $\bar u$ from \eqref{equiv}) are
unit vectors in $\R^3$.

Given such a frame we introduce the differentiated fields $\psi_k$ and
the connection coefficients $A_k$ by
\begin{equation}\label{connection}
  \begin{split}
    \psi_k = \partial_k u \cdot v + i \partial_k u \cdot w, \qquad A_k
    = \partial_k v \cdot w.
  \end{split}
\end{equation}
Due to the equivariance of $(u,v,w)$ it follows that both $\psi_k$ and
$A_k$ are spherically symmetric (therefore subject to the conventions
made in Section \ref{defnot}). Conversely, given $\psi_k$ and $A_k$ we
can return to the frame $(u,v,w)$ via the ODE system:
\begin{equation}
  \label{return}
  \left\{ \begin{array}{l}
      \partial_k u = (\Re {\psi_k}) v  + (\Im{\psi_k}) w
      \cr
      \partial_k v = - (\Re{\psi_k}) u + A_k w
      \cr
      \partial_k w = - (\Im{\psi_k}) u - A_k v
    \end{array} \right.
\end{equation}

If we introduce the covariant differentiation
\[
D_k = \partial_k + i A_k, \ \ k \in \{0,1,2\}
\]
it is a straightforward computation to check the compatibility
conditions:
\begin{equation} \label{compat} D_l \psi_k = D_k \psi_l, \ \ \
  l,k=0,1,2.
\end{equation}
The curvature of this connection is given by
\begin{equation}\label{curb}
  D_l D_k - D_k D_l = i(\partial_l A_k - \partial_k
  A_l) = i \Im{(\psi_l \bar{\psi}_k)}, \ \ \  l,k=0,1,2.
\end{equation}
An important geometric feature is that $\psi_2, A_2$ are closely
related to the original map. Precisely, for $A_2$ we have:
\begin{equation}
  A_2 = ( \overrightarrow{k} \times v) \cdot w=  \overrightarrow{k} \cdot (v \times w)= \overrightarrow{k} \cdot u =  u_3
  \label{a2u3}\end{equation}
and, in a similar manner,
\begin{equation}
  \psi_2 = w_3-iv_3
  \label{psi2vw3}\end{equation}
Since the $(u,v,w)$ frame is orthonormal, it follows that $|\psi_2|^2
= u_1^2 + u_2^2$ and the following important conservation law
\begin{equation} \label{cons} |\psi_2|^2 + A_2^2 = 1
\end{equation}

Now we turn our attention to the choice of the $(\bar v,\bar w)$ frame
at $\theta = 0$. Here we have the freedom of an arbitrary rotation
depending on $t$ and $r$. In this article we will use the Coulomb
gauge, which for general maps $u$ has the form $\text{div } A = 0$.
In polar coordinates this is written as $\partial_1 A_1 + r^{-2}
\partial_2 A_2 = 0$. However, in the equivariant case $A_2$ is radial,
so we are left with a simpler formulation $A_1 = 0$, or equivalently
\begin{equation}
  \partial_r \bar v \cdot \bar w=0
  \label{coulomb}\end{equation}
which can be rearranged into a convenient ODE as follows
\begin{equation} \label{cgeq}
  \partial_r  \bar v = (\bar v \cdot \bar u) \partial_r \bar u
  - (\bar v \cdot \partial_r \bar u) \bar u
\end{equation}
The first term on the right vanishes and could be omitted, but it is
convenient to add it so that the above linear ODE is solved not only
by $\bar v$ and $\bar w$, but also by $\bar u$. Then we can write an
equation for the matrix $ \calO = (\bar v, \bar w,\bar u)$:
\begin{equation} \label{cgeq-m}
  \partial_r \calO = M \calO, \qquad M = \partial_r \bar u \wedge \bar u : = 
  \partial_r \bar u \otimes \bar u - \bar u \otimes \partial_r \bar u
\end{equation}
with an antisymmetric matrix $M$.

An advantage of using the Coulomb gauge is that it makes the
derivative terms in the nonlinearity disappear. Unfortunately, this
only happens in the equivariant case, which is why in \cite{BIKT} we
had to use a different gauge, namely the caloric gauge.

The ODE \eqref{cgeq} needs to be initialized at some point.  A change
in the initialization leads to a multiplication of all of the $\psi_k$
by a unit sized complex number. This is irrelevant at fixed time, but
as the time varies we need to be careful and choose this
initialization uniformly with respect to $t$, in order to avoid
introducing a constant time dependent potential into the equations via
$A_0$.  Since in our results we start with data which converges
asymptotically to $-\vec{k}$ as $r \to \infty$, and the solutions
continue to have this property, it is natural to fix the choice of
$\bar{v}$ and $\bar{w}$ at infinity,
\begin{equation}
  \label{bcvw}
  \lim_{r \to \infty} \bar v(r,t) = \vec{i} , \qquad \lim_{r \to \infty} \bar w(r,t) =
  -\vec{j}
\end{equation}

The existence of a unique solution $\bar v \in C((0,+\infty):\R^3)$ of
\eqref{cgeq} satisfying \eqref{bcvw} is standard, we skip the details.
Moreover the solution is continuous with respect to $u$
in the following sense
\begin{equation} \label{gcont}
\| \bar v- \bar{\tilde v} \|_{L^\infty} \les \| u - \tilde u \|_{\dot H^1}
\end{equation}

\subsection{ Schr\"odinger maps in the Coulomb gauge} \label{COG}

We are now prepared to write the evolution equations for the
differentiated fields $\psi_1$ and $\psi_2$ in \eqref{connection}
computed with respect to the Coulomb gauge.

Writing the Laplacian in polar coordinates, a direct computation using
the formulas \eqref{connection} shows that we can rewrite the
Schr\"odinger Map equation \eqref{SM} in the form
\begin{equation}\label{psizero}
  \psi_0 = i \left(D_1 \psi_1 + \frac{1}{r} \psi_1 + \frac{1}{r^2} D_2
    \psi_2\right)
\end{equation}
Applying the operators $D_1$ and $D_2$ to both sides of this equation
and using the relation \eqref{curb} for $l,k = 1,2$ we obtain
\begin{equation} \label{psiab}
  \begin{split}
    D_1 \psi_0 = & \ i\left( D_1(D_1+\frac1r) \psi_1 + \frac{1}{r^2}
      D_2 D_1 \psi_2\right) - \frac{1}{r^2} \Im{(\psi_1 \bar{\psi}_2)}
    \psi_2
    \\
    D_2 \psi_0 = & \ i\left( (D_1+\frac1r)D_2 \psi_1 + \frac{1}{r^2}
      D_2 D_2 \psi_2\right) - \Im{(\psi_2 \bar{\psi}_1)} \psi_1
  \end{split}
\end{equation}
Using now \eqref{compat} for $(k,l)=(0,1)$ respectively $(k,l)=(0,2)$
on the left and for $(k,l)=(1,2)$ on the right we can derive the
evolution equations for $\psi_m$, $ m=1,2$:
\begin{equation} \label{dtpsiab}
  \begin{split}
    D_0 \psi_1 = & \ i\left( D_1(D_1+\frac1r) + \frac{1}{r^2} D_2
      D_2\right) \psi_1 - \frac{1}{r^2} \Im{(\psi_1 \bar{\psi}_2)}
    \psi_2
    \\
    D_0 \psi_2 = & \ i\left( (D_1+\frac1r)D_1 + \frac{1}{r^2} D_2
      D_2\right) \psi_2 - \Im{(\psi_2 \bar{\psi}_1)} \psi_1
  \end{split}
\end{equation}
In our set-up all functions are radial and we are using the the
Coulomb gauge $A_1 = 0$. Then these equations take the simpler form
\begin{equation*} \label{smg}
  \begin{split}
    \partial_t \psi_1 + i A_0 \psi_1 = & i \Delta \psi_1 - i
    \frac{1}{r^2} A_2^2 \psi_1 -i \frac1{r^2} \psi_1
    +  \frac2{r^3} A_2 \psi_2 - \frac{1}{r^2} \Im{(\psi_1 \bar{\psi}_2)} \psi_2 \\
    \partial_t \psi_2 + i A_0 \psi_2 = & i \Delta \psi_2 - i
    \frac{1}{r^2} A_2^2 \psi_2 - \Im{(\psi_2 \bar{\psi}_1)} \psi_1
  \end{split}
\end{equation*}
The two variables $\psi_1$ and $\psi_2$ are not independent.  Indeed,
the relations \eqref{compat} and \eqref{curb} for $(k,l)=(1,2)$ give
\begin{equation} \label{comp}
  \partial_r A_2= \Im{(\psi_1 \bar{\psi}_2)}, \qquad \partial_r \psi_2 = i A_2
  \psi_1
\end{equation}
which at the same time describe the relation between $\psi_1$ and
$\psi_2$ and determine $A_2$.

From the compatibility relations involving $A_0$, we obtain
\begin{equation} \label{a0}
  \partial_r A_0= - \frac1{2r^2} \partial_r (r^2 |\psi_1|^2 - |\psi_2|^2)
\end{equation}
from which we derive
\begin{equation}
  A_0 = - \frac12 \left( |\psi_1|^2 - \frac{1}{r^2}|\psi_2|^2\right)
  - [r\partial_r]^{-1} \left( |\psi_1|^2 - \frac{1}{r^2}|\psi_2|^2\right) 
  \label{a0bis}\end{equation}
This is where the initialization of the Coulomb gauge at infinity is
important. It guarantees that $A_0 \in L^p$, provided that $|\psi_1|^2
- r^{-2}|\psi_2|^2 \in L^p$ for $1 \leq p < \infty$. In particular,
without any additional regularity assumptions, we know that $A_0 \in
L^1$.  A direct computation using integration by parts gives that
\begin{equation} \label{A0c} \int A_0(r) rdr =0.
\end{equation}

The system satisfied by $\psi_1$ and $\frac{\psi_2}r$ (this being in
fact the correct variable instead of $\psi_2$) is given by:
\begin{equation*}
  \begin{split}
    (i \partial_t + \Delta - \frac{2}{r^2}) \psi_1 + \frac{4i}{r^2}
    \frac{\psi_2}r= & A_0 \psi_1 + \frac{A_2^2-1}{r^2} \psi_1
    + 2i \frac{A_2+1}{r^3} \psi_2 - i \Im{(\psi_1 \frac{\bar{\psi}_2}r}) \frac{\psi_2}r \\
    (i \partial_t + \Delta - \frac{2}{r^2}) \frac{\psi_2}{r} -
    \frac{4i}{r^2} \psi_1= & A_0 \frac{\psi_2}r + \frac{A_2^2-1}{r^2}
    \frac{\psi_2}r - 2i \frac{A_2+1}{r^2} \psi_1- i
    \Im{(\frac{\psi_2}r \bar{\psi}_1)} \psi_1
  \end{split}
\end{equation*}
The problem with this system is that its linear part is not decoupled.
This can be remedied by a change of variables. Indeed consider
\[
\psi^-=\psi_1 -i \frac{\psi_2}{r}, \qquad \psi^+=\psi_1 + i
\frac{\psi_2}{r}
\]
It turns out that $\psi^\pm$ satisfy a similar system (described
below) whose linear part is decoupled.  The relevance of the variables
$\psi^{\pm}$ comes also from the following reinterpretation.  If
$\W^{\pm}$ is defined as the vector
\[
\W^{\pm} = \partial_r u \pm \frac{1}{r} u \times R u \in T_u(\S^2)
\]
then $\psi^\pm$ is the representation of $\W^\pm$ with respect to the
frame $(v,w)$. The vector field $\W^+$ vanishes if and only if $u$ is
an equivariant soliton which starts at the south pole at $r =0$ and
goes to the north pole at $r = \infty$. For $\W^-$ the poles get
interchanged.  On the other hand, a direct computation leads to
\[
\begin{split}
  E(u) & = \pi \int_0^\infty \left( |\partial_r \bar{u}|^2 +
    \frac{1}{r^2} |\bar{u} \times
    R \bar{u}|^2 \right) rdr \\
  & = \pi \| \bar \W^\pm \|_{L^2}^2 \mp 2 \pi (\bar u_3(\infty)- \bar
  u_3(0))
\end{split}
\]
where we recall that $u(r,\theta)= e^{m \theta R} \bar{u}(r)$ and
$\bar u_3(\infty)=\lim_{r \rightarrow \infty} \bar u_3(r), \bar
u_3(0)=\lim_{r \rightarrow 0} \bar u_3(r)$ are well-defined since
$\bar u_1, \bar u_2 \in \dHe$ and if $f \in \dHe$ then $\lim_{r
  \rightarrow 0} f(r)=\lim_{r \rightarrow 0} f(r)=0$, see \cite{gkt1}
or \cite{BT-SSM}. From the above computation and that $E(u) < 4\pi$ it
follows that $\bar u_3(0)=\bar u_3(\infty) \in \{-1,+1\}$ and we have
made the choice $\bar u_3(0)=\bar u_3(\infty)=-1$.  This limit is
invariant under the evolution. Indeed since $\bar u,\Delta \bar u \in
\dot H^1$ it follows that $\bar u \times \Delta \bar u \in \dHe$,
where we use the fact that $\dHe \subset L^\infty$ is an
algebra. Therefore $\bar u_t \in \dHe$ hence $\bar u(0,t), \bar
u(\infty,t)$ are continuous in time.
  
In our setup we have the following identity
\begin{equation} \label{basicpsi} \|\psi^\pm\|_{L^2}^2 = \| \bar
  \W^\pm \|_{L^2}^2 = \frac{E(u)}{\pi}.
\end{equation}

From \eqref{gcont} it follows that the following continuity property holds true
\begin{equation} \label{dpsi}
\| \psi^\pm - \tilde{\psi}^\pm \|_{L^2} \les \| u - \tilde u  \|_{\dot H^1}
\end{equation}

A direct computation yields the following system for $\psi^{\pm}$:
\[
\begin{split}
  ( i \partial_t + H^-) \psi^- & = \left( A_0 - 2 \frac{A_2+1}{r^2}
    + \frac{A_2^2-1}{r^2} - \frac1{r} \Im{(\psi_2 \bar{\psi}_1)} \right) \psi^- \\
  (i \partial_t + H^+) \psi^+ & = \left( A_0 + 2 \frac{A_2+1}{r^2} +
    \frac{A_2^2-1}{r^2} + \frac1{r} \Im{(\psi_2 \bar{\psi}_1)} \right)
  \psi^+
\end{split}
\]
where
\[
H^- = \Delta - \frac{4}{r^2}, \qquad H^+ = \Delta .
\]
Here and whenever $\Delta$ acts on radial functions, it is known that
$\Delta = \partial_r^2 + \frac1r \partial_r$.  By replacing $\psi_1 =
\psi^- + i r^{-1} \psi_2$ and using $A_2^2 + |\psi_2|^2=1$, we obtain
the key evolution system we work with in this paper,
\begin{equation} \label{psieq} \left\{ \begin{array}{l} (i \partial_t
      + H^{-}) \psi^- = (A_0 - 2 \frac{A_2+1}{r^2} -
      \frac1{r}\Im{(\psi_2 \bar{\psi}^-)}) \psi^- \cr (i \partial_t +
      H^+) \psi^+ = ( A_0 + 2 \frac{A_2+1}{r^2} + \frac1{r}\Im{(\psi_2
        \bar{\psi}^+)}) \psi^+
    \end{array} \right.
\end{equation}
We will use this system in order to obtain estimates for
$\psi^\pm$. The old variables $\psi_1$ and $\frac{\psi_2}r$ are
recovered from
\begin{equation} \label{rela} \psi_1 = \frac{\psi^+ + \psi^-}2, \qquad
  \frac{\psi_2}r = \frac{\psi^+ - \psi^-}{2i}
\end{equation}
From the compatibility conditions \eqref{comp} we derive the formula
for $A_2$
\begin{equation} \label{A2} A_2(r)+1 = - \int_0^r
  \frac{|\psi^+|^2-|\psi^-|^2}{4} s ds
\end{equation}

From \eqref{a0bis} $A_0$ is given by
\begin{equation} \label{A0} A_0 = - \frac12 \Re(\overline{\psi}^+
  \psi^-) + [r\partial_r]^{-1} \Re(\overline{\psi}^+ \psi^-)
\end{equation}

The compatibility condition \eqref{comp} reduces then to
\begin{equation} \label{compnew}
  \partial_r r (\psi^+ - \psi^-) = - A_2 (\psi^+ + \psi^-)
\end{equation}

Next assume that $\psi^\pm \in L^2$ are given such that $\| \psi^-
\|_{L^2}, \| \psi^+ \|_{L^2} < 2\sqrt{2}$ and satisfy the
compatibility conditions \eqref{compnew}.  We reconstruct $A_2,
\psi_2, \psi_1$ using the \eqref{rela} and \eqref{A2}.  From
\eqref{A2} and \eqref{compnew} it follows that \eqref{comp} hold true.
From \eqref{A2} it follows that $A_2 \in L^\infty$ and it is
continuous and has limits both at $0$ and $\infty$. From the
definition of $\psi_2$ we have $\frac{\psi_2}r \in L^2$ and from
\eqref{compnew} we derive $\partial_r \psi_2 \in L^2$, hence $\psi_2
\in \dHe$. From this and \eqref{A2} it follows that $\partial_r A_2
\in L^2$, while by invoking \eqref{rdrm} we obtain $\frac{A_2+1}r \in
L^2$, therefore $A_2+1 \in \dHe$.  In particular $A_2(\infty)=\lim_{r
  \rightarrow \infty} A_2(r)=-1$ which implies that $\| \psi^+
\|_{L^2}=\| \psi^- \|_{L^2}$.

If $\| \psi^+\|_{L^2}= \| \psi^- \|_{L^2}< 2$ it follows from
\eqref{A2} that
\begin{equation} \label{A2neg} \sup_{r \in (0,\infty)} A_2(r) \leq -1
  + \frac{\| \psi^{-} \|^2_{L^2}}4 < 0
\end{equation}

If $\| \psi^+\|_{L^2}= \| \psi^- \|_{L^2}< 2\sqrt{2}$ it follows from
\eqref{A2} that
\begin{equation} \label{A2les1} \sup_{r \in (0,\infty)} A_2(r) \leq -1
  + \frac{\| \psi^{-} \|^2_{L^2}}4 < 1
\end{equation}

Most of the arguments in this paper will work for $\| \psi^+\|_{L^2}=
\| \psi^- \|_{L^2}< 2\sqrt{2}$, but when involving the virial
identities we will need the stronger conclusion in \eqref{A2neg},
hence the limitation of our final result to the case $\|
\psi^+\|_{L^2}= \| \psi^- \|_{L^2}< 2$.

In fact one can keep track of a single variable, $\psi^-$ or $\psi^+$
since it contains all the information about the map, provided that the
choice of gauge \eqref{bcvw} was made. To be more precise,
\eqref{comp} gives the following
\begin{equation}
  \label{comp1}
  \partial_r A_2= \Im{(\psi^- \bar{\psi}_2)}+\frac{1}r |\psi_2|^2, \qquad \partial_r
  \psi_2 = i A_2 \psi^- - \frac{1}r A_2 \psi_2
\end{equation}
We will show that if $\| \psi^- \|_{L^2} < 2 \sqrt{2}$, this system has a
unique solution $A_2+1,\psi_2 \in \dHe$.  From this we can reconstruct
$\psi_1, \psi^+,A_0$. Then we can return to the map $u$ via the system \eqref{return}
with the boundary condition at infinity given by \eqref{bcvw}. A similar procedure can completely reconstruct $u$
from $\psi^+$.

Keeping track of both variables $\psi^\pm$ (instead of just one) via the system \eqref{psieq}
is advantageous for the dynamical (in time) properties of the problem. 
Understanding how to recover all the information from only
one variable, say $\psi^-$,  is advantageous for the elliptic part of the
profile decomposition in Proposition \ref{pcc}.

\subsection{Regularity of the gauge elements} \label{sreg} In this
section we clarify the regularity of the gauge elements.  We extend
$\psi^\pm$ to two-dimensional functions by
\begin{equation} \label{ext} R_+ \psi^+:=R_0 \psi^+, \qquad R_-
  \psi^-= R_2 \psi^-
\end{equation}
Essentially we claim that if $u \in \dot H^1 \cap \dot H^3$ then the
two-dimensional functions $R_\pm \psi^\pm \in H^2$.  Our main claim is
the following
\begin{prop} \label{greg} If $u \in \dot H^3$ then $R_\pm \psi^\pm \in
  H^2$ and
  \begin{equation} \label{rtr} \| u \|_{\dot H^1 \cap \dot H^3}
    \approx \| R_+ \psi^+ \|_{H^2} + \| R_- \psi^- \|_{H^2}
  \end{equation}
\end{prop}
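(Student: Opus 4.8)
The plan is to route everything through the Hankel transform and reduce the equivalence \eqref{rtr} to two scalar facts: that the bottom-order part of $\|R_\pm\psi^\pm\|_{H^2}$ is the energy, while the top-order part is governed by the operators $H^\pm$, which are precisely the Bessel operators attached to the two chosen extensions. Indeed, since $R_+\psi^+=R_0\psi^+$ and $R_-\psi^-=R_2\psi^-$ while $H^+=H_0$ and $H^-=H_2$, the characterization \eqref{Freg} together with the identity $\Delta R_k f=R_k H_k f$ and the Plancherel formula for $\mathcal F_k$ gives
\[
\|R_+\psi^+\|_{H^2}^2 \approx \|\psi^+\|_{L^2}^2 + \|H^+\psi^+\|_{L^2}^2, \qquad \|R_-\psi^-\|_{H^2}^2 \approx \|\psi^-\|_{L^2}^2 + \|H^-\psi^-\|_{L^2}^2,
\]
the intermediate $\dot H^1$ pieces being filled in by interpolation (equivalently, $(1+\xi^2)^2\approx 1+\xi^4$). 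It therefore suffices to match $\|\psi^\pm\|_{L^2}$ with $\|u\|_{\dot H^1}$ and $\|H^\pm\psi^\pm\|_{L^2}$ with $\|u\|_{\dot H^3}$.

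The first match is immediate from \eqref{basicpsi}: since $\|\psi^\pm\|_{L^2}^2=E(u)/\pi$ and $\|u\|_{\dot H^1}^2=2E(u)$, the quantities $\|\psi^+\|_{L^2}^2+\|\psi^-\|_{L^2}^2$ and $\|u\|_{\dot H^1}^2$ agree up to a constant. This settles the bottom endpoint and, by the display above, identifies the $L^2$ part of the right-hand side of \eqref{rtr} with the $\dot H^1$ part of the left.

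For the top endpoint I would exploit the elliptic identity concealed in \eqref{psizero}. Write $\Psi:=\Delta u\cdot v+i\,\Delta u\cdot w$ for the frame representation of the tangential part of $\Delta u$; then \eqref{psizero} reads $\Psi=D_1\psi_1+\frac1r\psi_1+\frac1{r^2}D_2\psi_2$, i.e.\ $\Psi$ is the covariant Laplacian of the differentiated field. Since $u\in\dot H^3$ iff $\Delta u\in\dot H^1$, and the normal component $\Delta u\cdot u=-|\nabla u|^2$ is genuinely lower order, $\|u\|_{\dot H^3}$ is comparable to an $\dHe$-type norm of $\Psi$ modulo $\|u\|_{\dot H^1}$. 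Expanding the covariant Laplacian in the variables $\psi^\pm$ through \eqref{rela}, \eqref{comp} and the conservation law \eqref{cons}, the principal part is exactly $\tfrac12 H^+\psi^+$ in the $\psi^+$ channel and $\tfrac12 H^-\psi^-$ in the $\psi^-$ channel — the same algebra that produces the linear operators in \eqref{psieq} — while the remaining terms are the connection/quadratic expressions $A_0\psi^\pm$, $\frac{A_2+1}{r^2}\psi^\pm$ and $\frac1r\Im(\psi_2\bar\psi^\pm)\psi^\pm$. Estimating these in $L^2$ by means of \eqref{rdrm}, the identity $A_2^2-1=-|\psi_2|^2$ from \eqref{cons}, and the algebra embedding $\dHe\subset L^\infty$, one shows they are dominated by $\|u\|_{\dot H^1}$ times lower norms and hence absorbable; Lemma \ref{LBE} then converts the $H_k$-bounds into genuine second-derivative control. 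This yields $\|H^\pm\psi^\pm\|_{L^2}\approx\|u\|_{\dot H^3}$ up to the already-controlled $\dot H^1$ quantity, completing the equivalence \eqref{rtr}.

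The hard part is precisely this top-order step: one must verify that the connection coefficients $A_0$ and $A_2+1$ and the quadratic terms are truly of lower order, i.e.\ controlled at the $L^2$ level by $\|u\|_{\dot H^1}$ times the higher norm, so that the principal operators $H^\pm$ are not contaminated. This is where the fine structure matters — the use of $\psi_2/r$ rather than $\psi_2$, the critical $r^{-2}$ potentials, and the boundary behavior at $r=0,\infty$ — and where \eqref{rdrm}, the conservation law \eqref{cons}, and Lemma \ref{LBE} carry the load. The reverse inequality in \eqref{rtr} is obtained by the same estimates applied to the reconstruction of $u$ from $\psi^\pm$ via \eqref{comp1} and \eqref{return}, tracking regularity through the covariant system.
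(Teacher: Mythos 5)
Your Hankel-transform reduction $\| R_\pm \psi^\pm\|_{H^2} \approx \|\psi^\pm\|_{L^2} + \| H^\pm \psi^\pm\|_{L^2}$ and the identification of the $L^2$ level with the energy via \eqref{basicpsi} are both correct and consistent with the paper's framework. The genuine gap is in the top-order step, exactly where you locate "the hard part" — but the resolution you propose cannot work. The error terms $A_0\psi^\pm$, $\frac{A_2+1}{r^2}\psi^\pm$, $\frac1r\Im(\psi_2\bar\psi^\pm)\psi^\pm$ (and, at the intermediate level your induction must pass through, the term $\frac{1+u_3}{r}\psi^+$) are cubic expressions with \emph{exactly the same scaling} as the principal terms. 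Consider the first-order version of \eqref{rtr}, which your scheme needs as its base case in order to convert the "lower norms" in your error bounds into norms of $u$: the only quantities available below the top are $\|\psi^\pm\|_{L^2}$ (scaling degree $0$) and the $\dHe$-level norm (scaling degree $1$), so any product bound on a cubic error of scaling degree $1$ is forced to be of the form $\|\psi^\pm\|_{L^2}^2 \cdot (\text{top norm})$ — the top norm appears with power exactly one, and the prefactor is the conserved mass $\|\psi^\pm\|_{L^2}^2 = E(u)/\pi$. Indeed you write this yourself: the errors are "controlled by $\|u\|_{\dot H^1}$ times the higher norm." But then absorption requires $E(u)\ll 1$, and no smallness is available: the proposition carries no smallness hypothesis and the paper needs it for all energies up to $4\pi$ (constants in \eqref{rtr} may depend on the energy, but the argument must not break when the energy is of size one). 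So the absorption step fails, and with it both directions of your top-order argument.

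The paper's proof avoids absorption entirely by exploiting a structural feature of the Coulomb gauge: since $A_1=0$, one has $D_r(v+iw)=0$, hence $\partial_r \psi^\pm = (\partial_r W^\pm)\cdot(v+iw)$, and an explicit computation puts this in the form of a linear ODE in $r$,
\begin{equation*}
\partial_r \psi^+ + \frac{1+u_3}{r}\,\psi^+ = F^+, \qquad F^+ \in L^2 \ \text{by \eqref{uH2}}, \qquad \frac{1+u_3}{r} = \frac{1}{1-u_3}\,\frac{u_1^2+u_2^2}{r} \in L^1(dr).
\end{equation*}
The integrating factor $e^{\int \frac{1+u_3}{r}\,dr}$ is bounded above and below by constants depending only on the energy — however large — so $\partial_r\bigl(e^{\int \frac{1+u_3}{r}dr}\psi^+\bigr)\in L^2$; two-dimensional Sobolev embedding then gives $\psi^+\in L^4$, and since $\frac{1+u_3}{r}\in L^4$ as well, the coefficient term is seen \emph{a posteriori} to lie in $L^2$, upgrading to $\partial_r\psi^+\in L^2$ with no absorption whatsoever. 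The same integrating-factor/bootstrap device (together with the componentwise readings \eqref{uH2}, \eqref{uH3}, \eqref{uH33} of $u\in\dot H^2\cap \dot H^3$) carries the second-order step and the backtracking that proves the reverse inequality. If you wish to salvage your covariant-Laplacian route, you must replace absorption by an argument of this ODE type; the smallness your scheme needs simply is not there.
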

The proof of this result will be provided in the Appendix.

Therefore, in the context of $u \in \dot H^1 \cap \dot H^3$, we have
that $R_\pm \psi^\pm \in H^2 \subset L^\infty$. The $H^2$ regularity
cannot be extended to (two-dimensional extensions of) $\psi_1$ and
$\frac{\psi_2}r$ since the $\psi^+$ and $\psi^-$ require different
phases for regularity. However, all the Sobolev embeddings are
inherited by $\psi_1$ and $\frac{\psi_2}r$, in particular $\psi_1,
\frac{\psi_2}r \subset L^\infty$.  Since $A_2=u_3$ it follows that
$A_2 \in \dot H^1 \cap \dot H^3$ and $\partial_t A_2 \in H^1$.
Finally by differentiating with respect to $t$ the system
\eqref{cgeq}, one can show that $\partial_t \bar v \in H^1$, hence
$A_0 \in H^1$ which in turn gives $\partial_r A_0 \in L^2$.  Therefore 
all the compatibility conditions in the previous two
subsections are at least at the level of $L^2$.

\subsection{Recovering the map from $\psi^-$.} In this section we
address the issue of re-constructing the Schr\"odinger map $u$
together with its gauge elements from only one of its reduced
variables, say $\psi^-$.  Reconstructing $\psi_2,A_2$ such that
$\psi_2,A_2+1 \in \dHe$ is a unique process; however, the
reconstruction of the actual map with its frame, i.e. of $(u,v,w)$ is
unique provided one prescribes conditions at $\infty$.  The map $u$
satisfies $u(\infty)=-\vec{k}$, while the gauge is subjected to the
choice \eqref{bcvw}.

The main result of this section is the following

\begin{prop} \label{recprop} Given $\psi^- \in L^2$, such that $\|
  \psi^- \|_{L^2} < 2\sqrt{2}$, there is a unique map $u: \R^2
  \rightarrow \S^2$ with the property that $\psi^-$ is the
  representation of $\mathcal{W}^-$ relative to a Coulomb gauge
  satisfying \eqref{bcvw}.  This also satisfies $E(u)=\pi \| \psi^-
  \|_{L^2}^2$.
  
  The map $\psi^- \rightarrow u$ is Lipschitz continuous in the following sense
\begin{equation} \label{udif}
\| u - \tilde u \|_{\dot H^1} \les \| \psi^- - \tilde \psi^{-} \|_{L^2}
\end{equation}

\end{prop}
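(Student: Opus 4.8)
The plan is to construct $u$ from $\psi^-$ by first reconstructing the auxiliary gauge data $(A_2, \psi_2)$ via the coupled first-order system \eqref{comp1}, then integrating the frame ODE \eqref{return} with the boundary condition \eqref{bcvw} at infinity, and finally establishing uniqueness and the Lipschitz bound \eqref{udif}. The key observation making this work is that \eqref{comp1} is a self-contained ODE system in the unknowns $(A_2, \psi_2)$, driven by the given datum $\psi^-$, so the whole reconstruction is an iterated sequence of ODE solves rather than anything genuinely two-dimensional.

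First I would solve \eqref{comp1} for $(A_2+1, \psi_2)$. Rewriting the system with the natural normalizations $A_2(\infty)=-1$ and $\psi_2(\infty)=0$, one integrates from $r=\infty$ inward. The smallness hypothesis $\|\psi^-\|_{L^2}<2\sqrt 2$ is exactly what is needed: from the identity $\partial_r A_2 = \Im(\psi^-\bar\psi_2)+r^{-1}|\psi_2|^2$ and the conservation law $A_2^2+|\psi_2|^2=1$ one expects an a priori bound keeping $|A_2|\le 1$, and the integral bound \eqref{A2les1} shows $A_2$ stays below $1$. I would set up a contraction or an energy estimate in $\dHe$ for the pair $(A_2+1,\psi_2)$; the operators $[r\partial_r]^{-1}$ and $[r^{-1}\bar\partial_r]^{-1}$ from \eqref{rdrm} provide the quantitative control of the inverse derivatives, and the remarks already in subsection \ref{COG} (reconstructing $A_2,\psi_2$ and deducing $A_2+1,\psi_2\in\dHe$, hence $\|\psi^+\|_{L^2}=\|\psi^-\|_{L^2}$) supply the regularity statements I would cite rather than reprove. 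Once $(A_2,\psi_2)$ are in hand, $\psi_1$ and $\psi^+$ are recovered algebraically through \eqref{rela} and $\partial_r\psi_2=iA_2\psi_1$, and $A_0$ from \eqref{A0}; the energy identity $E(u)=\pi\|\psi^-\|_{L^2}^2$ then follows from \eqref{basicpsi} together with $\|\psi^+\|_{L^2}=\|\psi^-\|_{L^2}$.

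Next I would reconstruct the frame matrix $\calO=(\bar v,\bar w,\bar u)$. With $\psi_1$ and $A_2$ now determined, the system \eqref{return} in the variable $k=1$ is a linear ODE in $r$ for the orthonormal triple, and I would integrate it inward from $r=\infty$ using the prescribed limits \eqref{bcvw} for $\bar v,\bar w$ together with $\bar u(\infty)=-\vec k$. Orthonormality is preserved because the coefficient matrix is antisymmetric (this is the content of \eqref{cgeq-m}), so the solution lands in $\S^2$ for all $r$; the Coulomb condition $A_1=0$ holds by construction since we built the system from \eqref{connection}. Setting $u(r,\theta)=e^{\theta R}\bar u(r)$ produces the equivariant map, and the continuity/limit properties of elements of $\dHe$ guarantee $u\in\dot H^1$. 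Uniqueness is immediate: any two reconstructions agree on $(A_2,\psi_2)$ by uniqueness for \eqref{comp1}, hence on all derived quantities, and the ODE \eqref{return} with a fixed terminal condition has a unique solution.

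The Lipschitz estimate \eqref{udif} is where the main work lies. I would run the entire reconstruction for two data $\psi^-,\tilde\psi^-$ in parallel and estimate differences at each stage. First, subtracting the two copies of \eqref{comp1} gives a linear system for $(A_2-\tilde A_2,\psi_2-\tilde\psi_2)$ whose inhomogeneity is controlled by $\|\psi^--\tilde\psi^-\|_{L^2}$; the smallness $\|\psi^-\|_{L^2}<2\sqrt2$ ensures the homogeneous part is a contraction, yielding $\|A_2-\tilde A_2\|_{\dHe}+\|\psi_2-\tilde\psi_2\|_{\dHe}\lesssim\|\psi^--\tilde\psi^-\|_{L^2}$. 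Propagating this through \eqref{rela} bounds the difference of the $\psi_k$'s, and then the difference of the two frames solves an inhomogeneous linear ODE derived from \eqref{return}, giving $\|\bar u-\bar{\tilde u}\|_{L^\infty}$ and the $\dot H^1$ difference; this is precisely the mechanism already recorded in \eqref{gcont} and \eqref{dpsi}, now run in reverse. \textbf{The hard part} is the $r\to 0$ endpoint: the weight $r^{-1}$ in \eqref{comp1} is singular there, and one must verify that the inward integration does not lose the decay of $\psi_2$ and the correct limit $A_2(0)=-1$, so that $\psi_2,A_2+1$ genuinely land in $\dHe$ uniformly and the difference estimates survive down to the origin. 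Controlling this singular endpoint in the difference estimate, using the mapping properties \eqref{rdrm} to absorb the $r^{-1}$ and $r^{-2}$ weights, is the crux of the argument.
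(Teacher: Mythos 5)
There is a genuine gap at the central step of your construction: the integration of the frame ODE \eqref{return} from $r=\infty$. The coefficient matrix of that system is built from $\Re\psi_1,\Im\psi_1$, and the only information available is $\psi_1\in L^2(rdr)$; this does \emph{not} give $\psi_1\in L^1(dr)$ near infinity (e.g. $\psi_1\sim 1/(r\log r)$ is in $L^2(rdr)$ but not in $L^1(dr)$). Consequently the terminal-value problem at $r=\infty$ is not well posed by a direct Picard iteration, and "integrate it inward from $r=\infty$ using the prescribed limits \eqref{bcvw}" does not make sense as stated — this is exactly the obstruction the paper flags ("Since $M \notin L^1(dr)$, it is not meaningful to initialize the problem \eqref{return} at $\infty$"). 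The missing idea is structural: using the compatibility relation \eqref{comp} rewritten as $\psi_1=(A_2+1)\psi_1+i\partial_r\psi_2$, one splits the coefficient matrix as $M=N+\partial_r K$ with $\|N\|_{L^1(dr)}+\|K\|_{\dHe}\les\|\psi^-\|_{L^2}$ (the first factor is integrable because $\frac{A_2+1}{r}\in L^2$, and the second is an exact derivative of a $\dHe$ function), and then runs an iteration in $C([r,\infty])$ in which the $\partial_r K$ contribution is handled by integration by parts, with convergence coming from the smallness of $\|\psi^-\|_{L^2([R,\infty))}$ and extension to $(0,\infty)$ on intervals of small $L^2$ mass. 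Without this decomposition, neither the existence of the frame nor your difference estimates for the Lipschitz bound \eqref{udif} can get off the ground, since both are run through this very iteration.

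Two further points. First, you identify the crux as the $r\to 0$ endpoint of \eqref{comp1}; in the paper that endpoint is handled by the pointwise bound $|\psi_2|\les r[r\partial_r]^{-1}|\psi^-|$ together with \eqref{rdrm} and a localized Cauchy--Schwarz argument — delicate, but not the main obstacle, which sits at $r=\infty$ in the frame ODE as explained above. Second, your final step "the continuity/limit properties of elements of $\dHe$ guarantee $u\in\dot H^1$" skips a necessary identification: to control the angular part of the energy, i.e. $\|\bar u_1/r\|_{L^2}+\|\bar u_2/r\|_{L^2}$, and likewise the corresponding part of \eqref{udif}, one must show that $\bar u_3=A_2$ and $\bar w_3-i\bar v_3=\psi_2$ (they solve the same ODE system with the same data at infinity, hence coincide), which yields $|\bar u_1|^2+|\bar u_2|^2=|\psi_2|^2$ and lets one transfer the $\dHe$ bounds on $\psi_2$ to $u$. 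This identification, not a generic $\dHe$ embedding, is what closes both the membership $u\in\dot H^1$ and the estimate \eqref{udif}.
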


Here $\psi^+$ can be reconstructed from $\psi^-$.  Moreover the
equations \eqref{comp1} which we use for reconstruction force the
compatibility condition \eqref{compnew} between $\psi^\pm$.  The
result remains true if we start from $\psi^+$ just that we would start
the reconstruction (described below) from the corresponding
\eqref{comp1} written for $\psi^+$. The two problems are in effect
equivalent via an inversion. The uniqueness of the reconstruction
guarantees that starting from either $\psi^+$ or $\psi^-$ (which are
assumed to be compatible) gives the same $u$.
 
The proof consists of several steps. The first one deals with
recovering the two gauge elements $\psi_2,A_2$ from $\psi^-$ by using
the system \eqref{comp1}.

\begin{lema} \label{a2p2c} Given $\psi^- \in L^2$, such that 
$\|  \psi^- \|_{L^2} < 2 \sqrt{2}$, the system \eqref{comp1} has a unique
  solution $(A_2, \psi_2)$ satisfying $\psi_2, A_2 +1 \in \dHe$. This
  solution satisfies
  \begin{equation} \label{2est} \| \psi_2 \|_{\dHe} + \| A_2+1
    \|_{\dHe} + \| \frac{A_2+1}r \|_{L^1(dr)} \les \| \psi^- \|_{L^2}
  \end{equation}
  In addition we have the following properties:
  
  i) given $\epsilon > 0$, and $R$  such that $\| \psi^-
  \|_{L^2(\R \setminus [R^{-1},R])} \leq \epsilon$, then the following holds
  true
  \begin{equation} \label{loc2est} \| \psi_2 \|_{\dHe(\R \setminus
      [\epsilon R^{-1},R])} + \| A_2 +1 \|_{\dHe(\R \setminus
      [\epsilon R^{-1},R])} \les \epsilon
  \end{equation}
  ii) if $(\tilde A_2, \tilde \psi_2)$ is another solution (as above)
  to \eqref{comp1} corresponding to $\tilde \psi^-$, then
  \begin{equation} \label{1estdif} \| \psi_2 - \tilde \psi_2 \|_{\dHe}
    + \| A_2 - \tilde A_2 \|_{\dHe} \les \| \psi^- - \tilde \psi^-
    \|_{L^2}
  \end{equation}
  iii) if $(\tilde A_2, \tilde \psi_2)$ satisfy $\tilde \psi_2, \tilde
  A_2+1 \in \dHe $ and solve
  \begin{equation} \label{sys2ap}
    \begin{split}
      \partial_r \tilde \psi_2 = & \ i \tilde A_2 \tilde \psi^- -
      \frac{1}r \tilde A_2 \tilde \psi_2 + E_1 \\
      \partial_r \tilde A_2= & \ \Im{(\tilde \psi^- \bar{\tilde
          \psi}_2)}+\frac{1}r (1- \tilde A_2^2) + E_2
    \end{split}
  \end{equation}
  where $\| |E_1|+ |E_2|\|_{L^1(dr)+L^2} \les \epsilon$ then
  \begin{equation} \label{2estdif} \| \psi_2 - \tilde \psi_2 \|_{\dHe}
    + \| A_2 - \tilde A_2 \|_{\dHe} \les \| \psi^- - \tilde \psi^-
    \|_{L^2}+ \epsilon
  \end{equation}
  
  iv) if $\psi^- \in L^p$ with $1 \leq p < \infty$ then $\psi^+,
  \frac{\psi_2}r,\frac{1+A_2}r \in L^p$ and
  \begin{equation} \label{strfix} \| \psi^+ \|_{L^p} + \|
    \frac{\psi_2}r \|_{L^p} + \| \frac{1+A_2}r \|_{L^p} \les \| \psi^-
    \|_{L^p}
  \end{equation}
  
  v) if $R_{-} \psi^- \in H^s$ then $R_+ \psi^+ \in H^s$ for any $s
  \in \{ 1,2,3 \}$.
\end{lema}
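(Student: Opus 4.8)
The plan is to read \eqref{comp1} as a system of integral equations for the pair $(\psi_2,A_2+1)$ and to construct the solution by a fixed point argument in a ball of $\dHe \times \dHe$. Integrating the equation for $\psi_2$ from $r=0$, where the sought solution has $\psi_2(0)=0$, I would use the factored form $\partial_r(r\psi_2)=irA_2\psi^- + (1-A_2)\psi_2$ and invert it with $[r^{-1}\bar\partial_r]^{-1}$, while I integrate the $A_2$-equation from $r=\infty$ so that the gauge normalization $A_2(\infty)=-1$ is built in. This yields the scheme
\[
\frac{\psi_2}{r}=\frac{1}{r^2}[r^{-1}\bar\partial_r]^{-1}\!\big(iA_2\psi^- + \tfrac1s(1-A_2)\psi_2\big),\qquad A_2+1=-\int_r^\infty\!\Big(\Im(\psi^-\bar\psi_2)+\tfrac1s|\psi_2|^2\Big)\,ds .
\]
The structural fact underlying everything is the conservation law $A_2^2+|\psi_2|^2=1$: differentiating this quantity and substituting \eqref{comp1} shows it is constant, and its value $1$ at $r=\infty$ propagates to all $r$. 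Hence $|A_2|\le1$, $|\psi_2|\le1$, so the nonlinearity lives in a compact region, and the embedding $\dHe\subset L^\infty$ then controls all the quadratic terms. The threshold $\|\psi^-\|_{L^2}<2\sqrt2$ enters through \eqref{A2les1}, which forces $\sup_r A_2<1$; this rules out $A_2(0)=+1$, pins $A_2(0)=-1$, and keeps $1-A_2$ bounded below.

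The basic estimate \eqref{2est} is where I expect the main obstacle. From the $\psi_2$-equation one has $\|\partial_r\psi_2\|_{L^2}\le\|\psi^-\|_{L^2}+\|r^{-1}\psi_2\|_{L^2}$, so everything reduces to $\|r^{-1}\psi_2\|_{L^2}$. The naive bound, applying the $L^2$-boundedness of $r^{-2}[r^{-1}\bar\partial_r]^{-1}$ from \eqref{rdrm} to the term $\tfrac1r(1-A_2)\psi_2$, comes with a constant that is not below $1$, since $1-A_2$ is close to $2$ near both endpoints; thus the iteration does not close through this term alone. The resolution is to bypass it using the conservation law: integrating the $A_2$-equation of \eqref{comp1} over $(0,\infty)$ and using $A_2(0)=A_2(\infty)=-1$ gives $\|\psi^+\|_{L^2}=\|\psi^-\|_{L^2}$, whence $\|r^{-1}\psi_2\|_{L^2}=\tfrac12\|\psi^+-\psi^-\|_{L^2}\le\|\psi^-\|_{L^2}$ with no loss. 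For $A_2$ I would write $1+A_2=|\psi_2|^2/(1-A_2)$ and invoke $\sup_rA_2<1$, so that $\|r^{-1}(1+A_2)\|_{L^1(dr)}\les\|r^{-1}\psi_2\|_{L^2}^2\les\|\psi^-\|_{L^2}^2$ and $A_2+1\in\dHe$. This is precisely the point at which the value $2\sqrt2$, i.e.\ the $8\pi$ energy threshold, is used.

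Uniqueness and the Lipschitz bounds (ii), (iii) I would obtain from a difference estimate. Setting $\delta\psi_2=\psi_2-\tilde\psi_2$ and $\delta A_2=A_2-\tilde A_2$, the two conservation laws yield the identity $|\psi_2|^2-|\tilde\psi_2|^2=-(A_2+\tilde A_2)\,\delta A_2$, which I would insert into the difference of the $A_2$-equations to close a weighted energy estimate for $\|\delta\psi_2\|_{\dHe}+\|\delta A_2\|_{\dHe}$ in terms of $\|\psi^--\tilde\psi^-\|_{L^2}$, again using that the threshold keeps $1-A_2$ bounded below. Estimate \eqref{1estdif} is the case of distinct data, and \eqref{2estdif} is its perturbation: the source terms $E_1,E_2$ enter linearly and are placed in $L^1+L^2$ exactly as the right-hand sides of the integral equations. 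The localized bound \eqref{loc2est} follows by rerunning the same estimates with all norms restricted to $\R\setminus[\e R^{-1},R]$ and using the smallness of $\psi^-$ there; the only care needed is the tail of the nonlocal operators $[r^{-1}\bar\partial_r]^{-1}$, which is absorbed by the global bound already in hand.

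Finally, for (iv) I would repeat the integral-equation estimates with $L^2$ replaced by $L^p$, using the $L^p$ boundedness of $[r\partial_r]^{-1}$ and $r^{-2}[r^{-1}\bar\partial_r]^{-1}$ from \eqref{rdrm} together with $|A_2|,|\psi_2|\le1$; the relations $\psi^+=\psi^-+2ir^{-1}\psi_2$ and $r^{-1}(1+A_2)\le r^{-1}|\psi_2|^2/(1-A_2)$ reduce all three quantities to $\|r^{-1}\psi_2\|_{L^p}$. For (v) I would propagate Sobolev regularity from $\psi^-$ to $\psi^+$ through the compatibility relation \eqref{compnew}, read as a linear ODE for $\psi^+$ with coefficient $A_2$; the gain of a derivative for $A_2$ coming from $\partial_r A_2=\Im(\psi^-\bar\psi_2)+r^{-1}|\psi_2|^2$, combined with Lemma \ref{LBE} to pass between the one-dimensional norms and the $H^s$ norms of the two-dimensional extensions $R_\pm\psi^\pm$, lets me bootstrap from $s=1$ up to $s=3$. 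The genuinely delicate step remains the $\dHe$ estimate for $\psi_2$ in the second paragraph, where the borderline Hardy constant must be circumvented through the conservation law and the $2\sqrt2$ threshold.
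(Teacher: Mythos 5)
Your existence argument has a genuine gap, and it sits exactly at the point you flag as delicate. You correctly observe that the fixed-point map built from your factoring $\partial_r(r\psi_2)=irA_2\psi^-+(1-A_2)\psi_2$ cannot contract: near both endpoints $1-A_2\approx 2$, while $r^{-2}[r^{-1}\bar \partial_r]^{-1}$ has $L^2(rdr)$ operator norm $1$, so the iteration constant is about $2$. But your proposed rescue --- the conservation law $|\psi_2|^2+A_2^2=1$ together with the identity $\|\psi^+\|_{L^2}=\|\psi^-\|_{L^2}$, hence $\|r^{-1}\psi_2\|_{L^2}=\frac12\|\psi^+-\psi^-\|_{L^2}\leq\|\psi^-\|_{L^2}$ --- is circular: these identities hold only for an exact solution of \eqref{comp1} with the correct limits $A_2(0)=A_2(\infty)=-1$, not for the iterates of your integral map, so they cannot be fed back into the iteration that is supposed to produce that solution. (They do give a clean a posteriori proof of the $L^2$ part of \eqref{2est}, but only after existence is known.) The paper's mechanism is different: it uses the opposite factoring, $r\partial_r\frac{\psi_2}{r}=-i\psi^-+i(A_2+1)\psi^--\frac{(A_2+1)\psi_2}{r}$, in which the dangerous coefficient is $A_2+1$ rather than $1-A_2$; it contracts only on a neighborhood $[R,\infty)$ of infinity where $\|\psi^-\|_{L^2(R,\infty)}\leq\epsilon$, with $A_2=-\sqrt{1-|\psi_2|^2}$ slaved to $\psi_2$, so that $A_2+1=O(\epsilon^2)$ and the Lipschitz constant is $O(\epsilon)$; and it then extends the solution down to $r=0$ by ODE continuation. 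Initializing the $\psi_2$-equation at $r=0$, as you do, is precisely the direction the paper warns against: when $\psi^-=0$ the data $(\psi_2,A_2)(0)=(0,-1)$ is attained by the entire harmonic-map family $(e^{i\theta}h_1(\lambda r),h_3(\lambda r))$, and indeed $\psi_2=cr$ is a fixed vector of your linearized integral map (eigenvalue exactly $1$), so no scheme propagating from $r=0$ can be contractive; all uniqueness must come from infinity.

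The same missing ingredient undermines parts i) and iv). The paper's workhorse is the pointwise bound $|\psi_2|\les r[r\partial_r]^{-1}|\psi^-|$, obtained from an energy estimate for $F=\psi_2/(1-A_2)$ (this is where the lower bound on $1-A_2$, i.e.\ the threshold, is really used); this bound localizes to intervals and yields \eqref{loc2est}, and combined with \eqref{rdrm} it yields \eqref{strfix} for every $p$. Your substitute $\|r^{-1}\psi_2\|_{L^2}=\frac12\|\psi^+-\psi^-\|_{L^2}$ is intrinsically global and intrinsically $L^2$: it does not localize (it uses the boundary values at both ends), and for $p\ne 2$ there is no analogue of $\|\psi^+\|_{L^p}=\|\psi^-\|_{L^p}$, so your reduction of \eqref{strfix} to $\|r^{-1}\psi_2\|_{L^p}$ leaves exactly the hard step unproved. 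Finally, in parts ii)--iii) the difference of two solutions satisfies a linear system of the schematic form $\partial_r X=\frac1r LX+BX+F$ with $B$ merely in $L^2$ and not small; a ``weighted energy estimate'' does not close here for the same constant-of-size-$2$ reason, and the paper must invert the $\frac1r L$ part explicitly and remove the large part of $B$ by conjugation with $e^{\int B_1}$, $B_1\in L^1(dr)$, treating the remainder perturbatively. Your sketch would need this, or an equivalent device, to be complete.
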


The reason for having the second type of statement in \eqref{2estdif}
is of technical nature and will be apparent in Section
\ref{concomp}. The equation for $\tilde A_2$ in \eqref{sys2ap} is more
convenient in that form when taking differences. For the original
system \eqref{comp1} it does not matter how one writes the equation
for $A_2$ thanks to the conservation law $|\psi_2|^2+ A_2^2=1$;
however in the case of \eqref{sys2ap} this conservation law does not
hold true, hence we write the system in the more convenient form
\eqref{sys2ap}.

\begin{proof}
  Our strategy is to solve the ode system \eqref{comp1} from infinity.
  Since $\psi_2, 1+ A_2 \in \dHe$, it follows that $\lim_{r
    \rightarrow \infty} \psi_2=0, \lim_{r \rightarrow \infty}
  A_2=-1$ These two conditions play the role of boundary conditions
  at infinity. Since $\partial_r (|\psi_2|^2+ A_2^2) = 0$, it follows
  from the conditions at $\infty$ that $|\psi_2|^2+ A_2^2=1$ holds on
  all of $\R_+$.  We define
  \[
  \psi_1 = \psi^-+ i \frac{\psi_2}r, \qquad \psi^+ = \psi^-+ 2i
  \frac{\psi_2}r
  \]
  and note that
  \[
  \partial_r A_2 = \frac14 r(|\psi^+|^2 - |\psi^-|^2)
  \]
  The function $A_2$ we seek must satisfy $1+A_2 \in \dHe$, therefore
  it needs to have limit zero at both $r=0$ and $r = \infty$.  Hence
  integrating from infinity it gives
  \[
  1+ A_2 \leq \frac14 \|\psi^{-}\|_{L^2}^2 < 2
  \]
  Thus we must have $\sup_{r \in (0,\infty)} A_2(r) < 1$ everywhere
  (which is to say that the corresponding map $u$ cannot reach the north pole).

  To establish existence and uniqueness for \eqref{comp1} it suffices
  to do so in a neighbourhood $[R,+\infty)$ of infinity. The extension
  of this down to $r=0$ follows from standard arguments since
  $L^2(rdr) \subset L^{1}_{loc}(dr)$.

  To prove existence, by choosing $R$ large enough we can assume
  without any restriction in generality that
  \begin{equation}\label{psismall}
    \| \psi^-\|_{L^2(R,\infty)}  \leq \epsilon
  \end{equation}
  and seek $(\psi_2, A_2)$ with the property that
  \begin{equation}\label{psi2small}
    \|\psi_2\|_{\dot H^1_e(R,\infty)} \lesssim \epsilon
  \end{equation}
  This implies that $|\psi_2| \lesssim \epsilon$ and, by the relation
  $|\psi_2|^2+ A_2^2 =1$, it also gives $A_2 + 1 \les
  \epsilon^2$. Hence we are allowed to substitute
  $A_2(r)=-\sqrt{1-|\psi_2(r)|^2}$ in the $\psi_2$ equation and
  discard the dependent $A_2$ equation. We rewrite the $\psi_2$
  equation as
  \[
  (\partial_r - \frac{1}{r}) \psi_2 = -i \psi^- + i \frac{A_2+1}r
  \psi^- - \frac{(A_2+1)\psi_2}{r^2}
  \]
  or equivalently
  \[
  r \partial_r \frac{\psi_2}{r} = -i \psi^-+ i(A_2+1) \psi^- -
  \frac{(A_2+1)\psi_2}{r}
  \]
  and further
  \[
  \psi_2 = -i r [r \partial_r]^{-1} \psi^-+ r [r \partial_r]^{-1}(i
  (A_2+1) \psi^- - \frac{(A_2+1)\psi_2}{r})
  \]
  We know from \eqref{rdrm} that $[r \partial_r]^{-1}$ maps $L^2$ to
  $L^2$, which easily implies that
  \[
  r [r \partial_r]^{-1}: L^2 \to \dot H^1_e
  \]
  Hence in order to obtain $\psi_2$ via the contraction principle it
  suffices to show that for $\psi$ as in \eqref{psismall} and $\psi_2$
  as in \eqref{psi2small} the map
  \[
  \psi_2 \to i (A_2+1) \psi^- - \frac{(A_2+1)\psi_2}{r}
  \]
  is Lipschitz from $\dot H^1_e \to L^2$ with a small ($O(\epsilon)$
  in this case) Lipschitz constant. But this is straightforward due to
  the embedding $\dot H^1_e \subset L^\infty$.  Thus the existence of
  $\psi_2$ in $[R,\infty)$ follows, and the corresponding $A_2$ is
  recovered via $A_2(r)=-\sqrt{1-|\psi_2(r)|^2}$.  The same argument
  also gives Lipschitz dependence of $\psi_2$ on $\psi^-$ in
  $[R,\infty)$.

  Once we have the global solution $(\psi_2,A_2)$ we obtain the bound
  \eqref{2est} via an energy type estimate. Precisely, denoting
  \[
  F = \frac{\psi_2}{1-A_2}
  \]
  its derivative satisfies
  \[
  \left|\frac{d}{dr} |F|^2 - \frac2r |F|^2\right| \lesssim
  \frac{|\psi^-|}{1-A_2} | F|
  \]
  Since $1-A_2$ is bounded from below, this further leads to
  \[
  \left|\frac{d}{dr} \frac{|F|}r\right| \lesssim \frac{|\psi^-|}{r}
  \]
  Integrating from infinity we obtain
  \[
  |F| \lesssim r [r \partial_r]^{-1} |\psi^-|
  \]
  Returning to $\psi_2$ we get the pointwise bound
  \begin{equation}\label{psi2point}
    |\psi_2| \lesssim  r [r \partial_r]^{-1} |\psi^-|
  \end{equation}
  By the $L^2$ boundedness of $ [r \partial_r]^{-1}$ this gives the
  $L^2$ bound on $\frac{\psi_2}{r}$. Then the $L^2$ bounds for
  $\partial_r \psi_2$ and $\partial_r A_2$ follow directly from
  \eqref{comp1}, while the $L^2$ and the $L^1(dr)$ bound for
  $\frac{1+A_2}r$ are consequences of the compatibility relation
  $|\psi_2|^2 + A_2^2=1$.

  The above argument already gives the $[R,\infty)$ part of
  \eqref{loc2est} since \eqref{psi2point} holds on any such interval.
  Getting the $(0,\epsilon R^{-1}]$ part of \eqref{loc2est} is
  slightly more delicate.  It suffices to get the $L^2$ bound for
  $\frac{\psi_2}{r}$.  From \eqref{psi2point} we have
  \[
  |\psi_2| \lesssim r [r \partial_r]^{-1} ( 1_{(0,R^{-1}]}|\psi^-|) +
  r [r \partial_r]^{-1} (1_{[R^{-1},\infty)}|\psi^-|)
  \]
  For the first term we use the smallness of $\psi_2$ in the
  hypothesis.  For the second we instead produce a pointwise bound
  using Cauchy-Schwarz:
  \[
  r [r \partial_r]^{-1} (1_{[R^{-1},\infty)}|\psi^-|) \lesssim r
  \int_{R^{-1}}^\infty s^{-1} |\psi^-(s)| ds \lesssim r R
  \|\psi_2\|_{L^2}, \qquad r < R^{-1}
  \]
  This implies the desired $L^2$ bound.

  The reason for this difference between the argument near infinity
  and the one near zero has to do with the following observation. If
  $\psi^-=0$, then the system \eqref{comp1} has many solutions which
  are completely described by $(\psi_2(r),A_2(r))=(e^{i\theta}
  h_1(\lambda r), h_3(\lambda r))$, for some $\lambda \in [0, \infty]$
  where
  \[
  h_1(r)= \frac{2r}{1+r^{2}}, \qquad h_3=\frac{r^{2}-1}{r^{2}+1}.
  \]
  These solutions are generated by the equivariant harmonic maps 
  (which solve $u \times \Delta u =0$)
  written in their Coulomb basis as described above. The main equivariant
  harmonic map is $Q(r,\theta)=e^{\theta R}(h_1(r),0,h_3(r))^T$ and
  all the other ones are of the form $Q(\lambda r, \theta + \alpha)$.
  Constructing the associated Coulomb gauge, as described above, we
  obtain $(\psi_2(r),A_2(r))=(e^{i\theta} h_1(\lambda r), h_3(\lambda
  r))$ See \cite{BT-SSM} for more details. These are not good global
  candidates for our system, unless $\lambda =0$ since $\lim_{r
    \rightarrow \infty} h_3(r)=1$. But they can be good local
  candidates near $r = 0$.

  Next we turn our attention to \eqref{1estdif} and \eqref{2estdif}.
  In fact, in the case of \eqref{1estdif}, in light of the
  conservation law $|\tilde \psi_2|^2 + \tilde A_2^2=1$,
  \eqref{1estdif} follows from \eqref{2estdif} with $E_1=E_2=0$. Hence
  we focus our attention on \eqref{2estdif}. We denote
  \[
  \delta \psi = \tilde{\psi} - \psi, \qquad \delta A_2=\tilde A_2-A_2
  , \qquad \delta \psi_2=\tilde \psi_2 - \psi_2
  \]
  Without any restriction in generality we can make the assumption $\|
  \delta \psi \|_{L^2} \ll 1$ and the bootstrap assumption
  \begin{equation} \label{boot2} \| \delta \psi_2\|_{L^\infty} + \|
    \delta A_2\|_{L^\infty} + \| \frac{\delta \psi_2}r\|_{L^\infty} +
    \|\frac{ \delta A_2}r\|_{L^\infty} \lesssim \epsilon^\frac12 + \|
    \delta \psi \|_{L^2}^\frac12
  \end{equation}

  Then we derive the equations for them modulo error terms.  We have
  \[
  \begin{split}
    \partial_r \delta \psi_2= & \ i \delta A_2 \tilde \psi^- + i A_2
    \delta \psi - \frac1r A_2 \delta \psi_2 - \frac1r \delta A_2
    \tilde \psi_2 + E_1
    \\
    \partial_r \delta A_2 = & \ \Im ( \psi^- \overline{\delta \psi_2})
    + \Im(\delta \psi \overline{\tilde \psi_2}) - \frac{2}{r} A_2
    \delta A_2 - \frac{1}{r} (\delta A_2)^2+ E_2
  \end{split}
  \]
  The following terms $i A_2 \delta \psi, \Im(\delta \psi
  \overline{\tilde \psi_2}) $ can be directly included into the error
  terms $E_1, E_2$, while the quadratic term $\frac{1}{r} (\delta
  A_2)^2$ can be included in the error term $E_2$ based on
  \eqref{boot2}.  We obtain the following linear system for $(\delta
  \psi_2,\delta A_2)$:
  \[
  \begin{split}
    \partial_r \delta \psi_2= & \ \frac1r \delta \psi_2+ i \tilde
    \psi^- \delta A_2 - \frac1r (A_2+1) \delta \psi_2 - \frac1r \delta
    A_2 \tilde \psi_2 + E_1
    \\
    \partial_r \delta A_2 = & \ \frac{2}{r} \delta A_2 + \Im ( \psi^-
    \overline{ \delta \psi_2}) - \frac{2}{r} (1+A_2) \delta A_2 + E_2
  \end{split}
  \]
  By considering the $\Re \delta \psi_2, \Im \delta \psi_2$
  separately, this is a system of the form
  \[
  \partial_r X = \frac1r LX + B X + F, \qquad
  L =\left( \begin{array}{ccc} 1 & 0 & 0 \\
      0 & 1 & 0 \\ 0 & 0 & 2 \end{array} \right)
  \]
  where the matrix $B,F$ satisfy $B \in L^2$ and $F \in L^2+ L^1(dr)$.
  This system needs to be solved with zero Cauchy data at infinity.
  For this system we need to establish the bound
  \begin{equation}
    \| X\|_{L^\infty} +  \| \frac{X}{r}\|_{L^2} \lesssim \| F\|_{L^2 + L^1(dr)}
  \end{equation}
  If $B=0$ then
  \[
  X =\left( \begin{array}{ccc} r [r \partial_r]^{-1}  & 0 & 0 \\
      0 & r [r \partial_r]^{-1} & 0 \\ 0 & 0 & r^2
      [r^2 \partial_r]^{-1} \end{array} \right) F
  \]
  and the conclusion easily follows from argument of type
  \eqref{rdrm}.  If $B$ is small in either $L^2(rdr)$ or in $r^{-1}
  L^\infty$ then we can treat the $BX$ term perturbatively. If $B$ is
  large then some more work is needed.  We decompose $B = B_1+B_2$
  where $B_1 \in L^1(dr)$ and $|B_2| \ll \frac{1}r $.  We can
  construct the bounded matrix $e^{\int B_1}$ as a solution of
  $\partial_r e^{\int B_1}= e^{\int B_1} B_1$ which also has a bounded
  inverse. Then we can eliminate $B_1$ by conjugating with respect to
  $e^{\int B_1}$, and then treat the part with $B_2$ perturbatively.

  iv) From \eqref{psi2point} and \eqref{rdrm} we obtain
  \[
  \| \frac{\psi_2}{r} \|_{L^p} \les \| \psi^- \|_{L^p}
  \]
  from which \eqref{strfix} follows since $\psi^+= 2i \frac{\psi_2}r +
  \psi^-$ and $1+A_2=\frac{|\psi_2|^2}{1-A_2}$.

  v) Throughout this argument, the use of Sobolev embedding refers to
  the two-dimensional standard Sobolev embeddings which apply to
  $R_\pm \psi^\pm$, which then can be read in terms of $\psi^\pm$.

  If $s=1$ then we have
  \[
  \partial_r \psi^+ = 2i \partial_r \frac{\psi_2}r + \partial_r \psi^-
  = \frac{2i}r (i A_2 \psi^- - \frac1r (A_2+1)\psi_2 )+ \partial_r
  \psi^-
  \]
  Using \eqref{strfix} we estimate
  \[
  \begin{split}
    \| \partial_r \psi^+ \|_{L^2} & \les \| A_2 \|_{L^\infty} \|
    \frac{\psi^-}r \|_{L^2} + \|\frac{A_2+1}r\|_{L^4} \|
    \frac{\psi_2}r \|_{L^4}
    + \| \partial_r \psi^- \|_{L^2}  \\
    & \les \| \psi^- \|_{\dHe} \approx \| R_- \psi^- \|_{H^1}
  \end{split}
  \]
  which implies that $\| R_+ \psi^+ \|_{H^1} \les \| R_- \psi^-
  \|_{H^1}$.

  If $s=2$ then,
  \[
  \begin{split}
    \partial_r^2 \psi^+
    & = \partial_r  [\frac{2i}r (i A_2 \psi^- - \frac1r (A_2+1)\psi_2)+ \partial_r \psi^-] \\
    & = (\partial_r^2 + \frac2r \partial_r - \frac4{r^2})\psi^- + 2i  \partial_r [\frac{1+A_2}r (i \psi^- - \frac{\psi_2}r) ] \\
    & = (\partial_r^2 + \frac2r \partial_r - \frac4{r^2})\psi^- - 2i \frac{1+A_2}{r^2} (i \psi^- - \frac{\psi_2}r) \\
    & + 2i (\Im(\psi^- \frac{\bar \psi_2}r)+ \frac{|\psi_2|^2}{r^2})(i \psi^- - \frac{\psi_2}r) \\
    & + 2i \frac{1+A_2}r [i \partial_r \psi^- - \frac{1}r (i A_2
    \psi^- - \frac1r (A_2+1)\psi_2 )]
  \end{split}
  \]
  From part ii) of Lemma \ref{LBE} we have that $(\partial_r^2 +
  \frac2r \partial_r - \frac4{r^2}) \psi^- \in L^2$.  All the other
  terms are estimated in $L^2$ by using that $\frac{\psi^-}{r^2} \in
  L^2$, the Sobolev embedding $\psi^- \in L^4 \cap L^6$ which implies
  by \eqref{strfix} that $\frac{\psi_2}{r} \in L^4 \cap L^6$ and that
  $1+A_2=\frac{|\psi_2|^2}{1-A_2}$.

  The term $\frac1r \partial_r \psi^+$ is treated via a similar but
  easier argument.  This shows that $(\partial_r^2+\frac1r \partial_r)
  \psi^+ \in L^2$ which amounts to $R_+ \psi_+ \in L^2$.

  If $s=3$ then it is enough to show that $\partial_r \Delta \psi^+
  \in L^2$ in order to conclude that $R_+ \psi^+ \in \dot H^3$. A
  direct computation gives
  \[
  \begin{split}
    \partial_r (\partial_r+\frac1r) \partial_r \psi^+ = (\partial_r +
    \frac2r) (\partial_r - \frac1r) \partial_r \psi^+ = (\partial_r +
    \frac2r) (\Delta - \frac4{r^2}) \psi^- + (\partial_r + \frac2r) S
  \end{split}
  \]
  where
  \[
  \begin{split}
    \frac1{2i} S = & -2 \frac{1+A_2}{r^2} (i \psi^- - \frac{\psi_2}r)
    + (\Im(\psi^- \frac{\bar \psi_2}r)+ \frac{|\psi_2|^2}{r^2})(i \psi^- - \frac{\psi_2}r) \\
    & + \frac{1+A_2}r [i \partial_r \psi^- - \frac{1}r (i A_2 \psi^- -
    \frac1r (A_2+1)\psi_2 )]
  \end{split}
  \]

  From the hypothesis that $\Delta R_- \psi^- \in H^1$ we obtain that
  $(\Delta-\frac{4}{r^2}) \psi^- \in \dHe$, hence $(\partial_r +
  \frac2r) (\Delta - \frac4{r^2}) \psi^- \in L^2$. The term
  $(\partial_r + \frac2r) S$ is further expanded by using the above
  computation for $S$, the system \eqref{comp1} and the fact that
  $(\partial_r + \frac2r) \frac1{r^2}=0$. By using Sobolev embeddings
  one shows that $(\partial_r + \frac2r) S \in L^2$, the details are
  left to the reader.

\end{proof}

\begin{proof}[Proof of Proposition \ref{recprop}]
  With $\psi_2,A_2$ constructed above, we can reconstruct
  $\psi_1=\psi^- +i\frac{\psi_2}r$. Then we solve the system
  \eqref{return} at the level of $(\bar{u}, \bar{v}, \bar{w})$. We
  would like to solve this system with condition at $\infty$,
  $\bar{u}=-\overrightarrow{k}, \bar{v}=\overrightarrow{i},
  \bar{w}=\overrightarrow{j}$.  But this cannot be done
  apriori. Indeed, consider the coefficient matrix in \eqref{return}
  \[
  M=\left( \begin{array}{ccc} 0 & \Re{\psi_1} & \Im{\psi_1} \\
      -\Re{\psi_1} & 0 & 0 \\ -\Im{\psi_1} & 0 & 0 \end{array} \right)
  \]
  Since $M \notin L^1(dr)$, it is not meaningful to initialize the
  problem \eqref{return} at $\infty$. However $M$ has another structure
  which is a consequence of \eqref{comp} rewritten as $\psi_1= (A_2+1) \psi_1 + i \partial_r \psi_2$.
 Therefore $ M= N+ \partial_r K$ and, by \eqref{2est}, $N,K$ satisfy  
 \[
 \| N \|_{L^1(dr)} + \| K \|_{\dHe} \les \| \psi^- \|_{L^2}
 \]
This inequality localizes on intervals $[r,\infty)$ due to \eqref{loc2est}. 
 This allows us to construct solutions with data at $r=\infty$ by using the iteration
  scheme
  \[
  X=\sum_{i} X_i, \qquad X_0=X(\infty), \qquad X_i(r) =
  \int_{r}^\infty M(s) X_{i-1} ds
  \]
  We run the iteration scheme in the space $C([r,\infty])$ of continuous functions on $(r,\infty)$ which
  have limits at $\infty$. Under the assumption that $X_{i-1} \in C([r,\infty])$ we obtain
\[
\begin{split}
X_i(r)& =\int_{r}^\infty (N(s)+\partial_s K(s)) X_{i-1} ds \\
& = \int_{r}^\infty N(s) X_{i-1} ds- K(r)X_{i-1}(r) - \int_{r}^\infty K(s) \partial_s X_{i-1}(s) ds
\end{split}
\]
and further that
\[
\| \partial_r X_i \|_{L^2([r,\infty))} + \| X_i \|_{C([r,\infty])} 
\les \| \psi^- \|_{L^2([r,\infty))} (\| X_{i-1} \|_{L^\infty([r,\infty])}+ \| \partial_r X_{i-1} \|_{L^2([r,\infty))})
\]
 Therefore, inductively, we obtain
 \[
\| \partial_r X_i \|_{L^2([r,\infty))} + \| X_i \|_{C([r,\infty])} 
\les \| \psi^- \|_{L^2([r,\infty))}^i 
\]
  By choosing $R$ large such that $\| \psi \|_{L^2([R,\infty))}$ is small, 
  we can rely on an iteration scheme to construct the solution $X$
  on $[R,\infty)$. 
  
  The uniqueness of this solution is guaranteed by the conservation
  law $\| X \| = constant$ which follows from the antisymmetry of
  $M$.

  This also guarantees that the orthonormality conditions imposed at
  $\infty$ are preserved (recall that $\infty$,
  $\bar{u}=-\overrightarrow{k}, \bar{v}=\overrightarrow{i},
  \bar{w}=\overrightarrow{j}$).  The solution constructed above can be
  extended to $(0,\infty)$ by running a similar argument on intervals where $\| \psi^- \|_{L^2(I)}$
  is small, where the last interval is of the form $(0,r]$. 
  
  The above argument leads to an estimate of the form
  \[
  \| X \|_{C([0,\infty])} + \| \partial_r X \|_{L^2} \les \| \psi^- \|_{L^2}
  \]
  where by $C([0,\infty])$ we mean continuous functions on $(0,\infty)$ which have limits at $0$ and $\infty$. 
  
  Additional information on $\bar{u},\bar{v},\bar{w}$ will be obtained
  in a different manner. Notice that $\bar{u}_3$ and $\zeta =
  \bar{w}_3- i \bar{v}_3$ solve the system
  \[
  \partial_r \bar{u}_3 = \Im{(\psi_1 \bar{\zeta})}, \qquad \partial_r
  \zeta = i \bar{u}_3 \psi_1
  \]
  which is the same as the one satisfied by $A_2, \psi_2$. Since they obey the same
  conditions at $\infty$ we conclude that $\bar{u}_3=A_2, \zeta = \psi_2$.
  From this and the fact that $|\psi_2|^2+A_2^2=1$ it follows also
  that $|\bar u_1|^2 + |\bar u_2|^2=|\psi_2|^2$. 
  
  Next, we extend the system of vectors to $u,v,w$ using the
  equivariant setup, i.e.  by multiplying them with $e^{\theta
    R}$. Using the identification just described above and the
  orthonormality conditions, it follows that \eqref{return} is
  satisfied for $k=2$.  Therefore we have just established the
  existence of an equivariant map $u$ whose vector field
  $\mathcal{W}^-$ in the gauge $(v,w)$ is $\psi^-$ and whose gauge
  elements are $\psi_1,\psi_2,A_2$. Moreover, we have that
  \[
  E(u)= \pi \| \psi^- \|_{L^2}
  \]

Given two fields $\psi^-, \tilde \psi^-$ we reconstruct $X$ and $\tilde X$ as above. Since the construction is
iterative it also follows that
\[
\| X-\tilde X \|_{C[0,\infty]} + \| \partial_r (X-\tilde X) \|_{L^2} \les \| \psi - \tilde \psi \|_{L^2}
\] 
from estimate for of the derivative part in $E(u-\tilde u)$ follows. Since $u_1 = v_2 w_3 - v_3 w_2$, 
$\tilde u_1 = \tilde v_2 \tilde w_3 - \tilde v_3 \tilde w_2$, $\psi_2 = \bar w_3 - i \bar v_3$ and
$\tilde \psi_2 = \bar{ \tilde w}_3 - i \bar{ \tilde v}_3$ it follows that
\[
\|\frac{u_1 - \tilde u_1}r \|_{L^2} 
\les \| \frac{\psi_2-\tilde \psi_2}r \|_{L^2} \| X \|_{L^\infty} + \| X - \tilde X \|_{L^\infty} \| \frac{\tilde \psi_2}r \|_{L^2}
\les \| \psi^- - \tilde \psi^- \|_{L^2}
\]
A similar argument shows that $\|\frac{u_2 - \tilde u_2}r \|_{L^2}  \les \| \psi^- - \tilde \psi^- \|_{L^2}$ which completes 
the proof of \eqref{udif}. 
 
\end{proof}

\section{The Cauchy problem}

In this section we are concerned with the nonlinear system of
equations \eqref{psieq} which we recall here
\begin{equation*}
  \left\{ \begin{array}{l}
      (i \partial_t  +  H^{-}) \psi^- = (A_0 - 2 \frac{A_2+1}{r^2} 
      - \frac1{r}\Im{(\psi_2 \bar{\psi}^-)}) \psi^-
      \cr
      (i \partial_t  +  H^+) \psi^+ = ( A_0 + 2 \frac{A_2+1}{r^2}
      + \frac1{r}\Im{(\psi_2 \bar{\psi}^+)}) \psi^+
    \end{array} \right.
\end{equation*}
where $\psi_2,A_2,A_0$ are given by \eqref{rela}, \eqref{A2},
respectively \eqref{A0}.  The problem comes with an initial data
$\psi^\pm(t_0)=\psi^\pm_0$ and we would like to understand its
well-posedness on intervals $I \subset \R$ with $t_0 \in I$.

We will be mainly interested in solutions of this system which come
from Schr\"odinger maps with energy below the critical threshold,
i.e. they satisfy the compatibility conditions \eqref{compnew} and
with $\| \psi^+ \|_{L^2}= \| \psi^- \|_{L^2}< 2\sqrt{2}$.

For simplicity we denote the nonlinearities by
\begin{equation} \label{psin}
  \begin{split}
    N^-(\psi^-) & = (A_0 - 2 \frac{A_2+1}{r^2} -
    \frac1{r}\Im{(\psi_2 \bar{\psi}^-)}) \psi^- \\
    N^+(\psi^+) & =( A_0 + 2 \frac{A_2+1}{r^2} + \frac1{r}\Im{(\psi_2
      \bar{\psi}^+)}) \psi^+
  \end{split}
\end{equation}
 
We define the mass of a function $f$ by $M(f):=\| f \|^2_{L^2}$.  The
system \eqref{psieq} formally conserves the mass,
i.e. $M(\psi^-(t))=M(\psi^-(0))$ and $M(\psi^+(t))=M(\psi^+(0))$ for
all $t$ in the interval of existence. Moreover, as discussed in
subsection \ref{COG}, a compatible pair also satisfies $\| \psi^+(0)
\|_{L^2}=\| \psi^-(0) \|_{L^2}$.

\subsection{Strichartz estimates}

We begin our analysis with the linear equation
\begin{equation} \label{beq} (i \partial_t + H_k) u =f, \qquad
  u(0)=u_0
\end{equation}
where we recall $H_k=\partial_r^2 + \frac1r \partial_r -
\frac{k^2}{r^2}$. Note that $H^+=H_0$ and $H^-=H_2$. For consistency
in writing we may also choose to use sometimes $R_+=R_0$ and $R_-=R_2$
(recall \eqref{Ddef}).

Our first claim is that, for each $k$, $u$ satisfies the standard
Strichartz estimates
\begin{equation} \label{STR} \| |\nabla|^s R_k u \|_{L^p_t L^q_r} \les
  \||\nabla|^s R_k u_0 \| + \| |\nabla|^s R_k f \|_{L^{\tilde p'}_t
    L^{\tilde q'}_r}
\end{equation}
where $|\nabla|^s=(-\Delta)^\frac{s}2$ (defined in the usual manner),
$(p,q),(\tilde p, \tilde q)$ are admissible pairs in two dimensions
($\frac{1}{p}+\frac{1}{q}=\frac12, 2 < p \leq \infty$) and $(\tilde
p', \tilde q')$ is the dual pair of $(\tilde p, \tilde q)$. Indeed,
$R_k u$ satisfies the following equation
\[
\begin{split}
  (i\partial_t + \Delta) R_k u= R_k f, \qquad R_k u(0)=R_k u_0
\end{split}
\]
Then \eqref{STR} follows from the Strichartz estimates for the Schr\"odinger equation in
two dimensions.  We need to be able to read the Strichartz estimates
at the level of the radial functions.  For even powers of $s$ we use
the identity $\Delta R_k v = R_k H_k v$, hence
\begin{equation} \label{STRE} \| H_k v \|_{L^p_t L^q_r} = \| \Delta
  R_k v \|_{L^p_t L^q_x}
\end{equation}
and this can be extended to higher regularity but we will not need it.

For odd values of $s$ we use that $|\nabla|^s=|\nabla|
(-\Delta)^{\frac{s-1}2}$ and that for $k \ne 0$
\begin{equation} \label{STRO} \| \partial_r v \|_{L^p_t L^q_r} + \|
  \frac{v}r \|_{L^p_t L^q_r} \les \| |\nabla| R_k v \|_{L^p_tL^q_x}
\end{equation}
while for $k=0$
\begin{equation} \label{STRZ} \| \partial_r v \|_{L^p_t L^q_r} \les \|
  |\nabla| R_k v \|_{L^p_tL^q_x}
\end{equation}

In the context of additional regularity, we need improved
versions of the Strichartz estimates as described below.

\begin{lema} \label{LB} Assume that $u^\pm$ satisfy \eqref{beq} with
  initial data $u_0^\pm$ and forcing $f^\pm$.

  i) If $u_0^+ \in L^2$ is such that $H^+ u_0^+ \in L^2$, then the
  following holds true
  \[
  \||\partial_r^2 u^+| + | \frac{\partial_r u^+}{r}| \|_{L^\infty L^2
    \cap L^4 L^4 \cap L^3L^6} \les \| H^+u_0^+ \|_{L^2} + \| H^+ f^+
  \|_{L^1L^2}
  \]
  ii) If $u_0^- \in L^2$ is such that $H^- u_0^- \in L^2$, then the
  following holds true
  \[
  \||\partial_r^2 u^-| + | \frac{\partial_r u^-}{r}|+
  |\frac{u^-}{r^2}| \|_{L^\infty L^2 \cap L^4 L^4 \cap L^3L^6} \les \|
  H^- u_0^- \|_{L^2} + \| H^- f^- \|_{L^1L^2}
  \]
  iii) If $u_0^- \in L^2$ is such that $H^- u_0^- \in \dHe$, then the
  following holds true
  \[
  \| \frac{1}{r} (\partial_r-\frac1r) \partial_r u^- \|_{L^\infty L^2
    \cap L^4 L^4 \cap L^3L^6} \les \| H^- u_0^- \|_{\dHe} + \| H^- f^-
  \|_{L^1 \dHe}
  \]
\end{lema}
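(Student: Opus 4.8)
The plan is to exploit the fact that the operators $H^\pm$ have constant (time-independent) coefficients and therefore commute with the Schr\"odinger evolution. Setting $w^\pm = H^\pm u^\pm$, one checks directly from \eqref{beq} that $w^\pm$ solves the same linear equation $(i\partial_t + H^\pm)w^\pm = H^\pm f^\pm$ with initial data $w^\pm(0) = H^\pm u_0^\pm$. Thus each of the three estimates reduces to a two-step argument: first a Strichartz bound for $w^\pm$ furnished by \eqref{STR}, and then an elliptic estimate at fixed time which converts the norms of $H^\pm u^\pm$ (or of its gradient, in part iii)) into the weighted radial derivative norms on the left-hand side. The three admissible pairs implicit in the norm $L^\infty L^2 \cap L^4 L^4 \cap L^3 L^6$ are $(\infty,2)$, $(4,4)$ and $(3,6)$, all satisfying $\tfrac1p+\tfrac1q=\tfrac12$, while the forcing is placed in the dual endpoint pair $L^1L^2$.

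For parts i) and ii) I would apply \eqref{STR} with $s=0$ to $R_\pm w^\pm$, which solves $(i\partial_t+\Delta)R_\pm w^\pm = R_\pm H^\pm f^\pm$, and translate back via the identity \eqref{STRE} to obtain
\[
\|H^\pm u^\pm\|_{L^p_tL^q_r} \les \|H^\pm u_0^\pm\|_{L^2} + \|H^\pm f^\pm\|_{L^1L^2}
\]
for each of the three pairs. It then remains to bound, at each fixed time, the quantities $\partial_r^2 u^\pm$, $r^{-1}\partial_r u^\pm$ (and $r^{-2}u^-$ in part ii)) in $L^q_r$ by $\|H^\pm u^\pm\|_{L^q_r}$. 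This is the $L^q$ analogue of Lemma \ref{LBE}: writing $\nabla^2 R_k u^\pm = \nabla^2(-\Delta)^{-1}(R_k H_k u^\pm)$ and noting that $\nabla^2(-\Delta)^{-1}$ is a matrix of Calder\'on--Zygmund operators, one obtains these bounds for every $1<q<\infty$, reading off the radial and angular components exactly as in the $L^2$ case. The term $r^{-2}u^-$ is available only in the $k=2$ case, since $R_2 u^-$ vanishes to second order at the origin; this is why it is absent from part i).

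Part iii) carries one extra derivative, so I would instead apply \eqref{STR} with $s=1$ to $w^- = H^- u^-$, which by \eqref{STRO} and \eqref{Frg1} controls
\[
\|\partial_r(H^- u^-)\|_{L^p_tL^q_r} + \|r^{-1}H^- u^-\|_{L^p_tL^q_r} \les \|H^- u_0^-\|_{\dHe} + \|H^- f^-\|_{L^1\dHe}.
\]
To convert this into a bound on $\frac1r(\partial_r-\frac1r)\partial_r u^-$ I would use the factorization $H^- = H_2 = (\partial_r + \frac3r)(\partial_r - \frac2r)$ together with the pointwise identity
\[
\frac1r\Big(\partial_r-\frac1r\Big)\partial_r u^- = \frac{H_2 u^-}{r} - \frac{2}{r^2}\Big(\partial_r - \frac2r\Big)u^-.
\]
The first term on the right is controlled above. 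For the second, setting $v = (\partial_r - \frac2r)u^-$ one has $(\partial_r + \frac3r)v = H_2 u^-$, i.e. $\partial_r(r^3 v) = r^3 H_2 u^-$; solving this first-order ODE and applying a Hardy estimate of the same homogeneity as those in \eqref{rdrm} yields $\|r^{-2}v\|_{L^q_r} \les \|r^{-1}H_2 u^-\|_{L^q_r}$ for the relevant $q$, closing the estimate.

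The main obstacle is the elliptic conversion step, and most acutely the third-order weighted operator in part iii). There one must verify the factorization and carry out the Hardy/Schur bound with the correct homogeneity relative to the $r\,dr$ measure convention, and check that the resulting Hardy operator (with kernel homogeneous of degree $-2$) is indeed $L^q$-bounded for $q \in \{2,4,6\}$. By contrast, the $s=0$ elliptic estimates underlying parts i) and ii) are a routine Calder\'on--Zygmund upgrade of Lemma \ref{LBE}.
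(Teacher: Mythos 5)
Your proof is correct, but it runs on a genuinely different mechanism from the paper's. The paper never commutes $H^\pm$ through the flow; it works on the Hankel side: writing $u^-=\int e^{it\xi^2}J_2(r\xi)\,\mathcal F_2u_0^-(\xi)\,\xi\,d\xi$ and using the Bessel recurrences \eqref{derB}, it expresses $\partial_r u^-$ as $e^{itH_1}g_1-e^{itH_3}g_3$, a difference of free flows in the adjacent equivariance classes with data $R_1g_1,\,R_3g_3\in\dot H^1$, so that \eqref{STR} and \eqref{STRO} give parts i), ii) (the term $u^-/r^2$ being recovered at the end from $H_2u^-=\partial_r^2u^-+r^{-1}\partial_r u^--4r^{-2}u^-$), and part iii) repeats this with $(\partial_r-\frac1r)\partial_r u^-$ landing in classes $2$ and $4$. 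You instead obtain Strichartz bounds only for $H^\pm u^\pm$ (at $s=0$, resp.\ $s=1$) and convert by fixed-time elliptic estimates. That conversion is valid: for $k=2$ the operators $\Delta$, $4\partial_z^2$, $4\partial_{\bar z}^2$ applied to $e^{2i\theta}u$ produce, up to unimodular phases, the combinations $u''+u'/r-4u/r^2$, $u''+3u'/r$, $u''-5u'/r+8u/r^2$, which are linearly independent in $(u'',u'/r,u/r^2)$, so each component is pointwise dominated by the Hessian and Calder\'on--Zygmund closes the $L^q$ bound; note that this independence fails for $k=0$, which is the correct reason (rather than the order of vanishing at the origin) why $u/r^2$ is absent from part i). Your factorization $H_2=(\partial_r+\frac3r)(\partial_r-\frac2r)$ and the pointwise identity in iii) are exact, and the Hardy operator $g\mapsto r^{-5}\int_0^r s^4 g\,ds$ is bounded on $L^q(rdr)$ for every $q\geq 1$ by Minkowski's inequality, in the same spirit as \eqref{rdrm}. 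Two details you should write out if you finalize this route: the $L^q$ elliptic step is not literally ``as in the $L^2$ case,'' since Lemma \ref{LBE} is itself proved by Hankel calculus rather than by Hessian components; and when solving $\partial_r(r^3v)=r^3H_2u^-$ you must justify fixing the constant of integration at $r=0$, which follows because the fixed-time $L^2$ theory gives $v/r\in L^2(rdr)$ and hence forbids $r^3v\to c\neq0$. As for what each route buys: the paper's argument stays entirely within $L^2$-based Hankel calculus plus the standard Strichartz estimates for equivariant extensions, producing each term directly; yours avoids Bessel-function identities altogether and is more modular, at the cost of invoking $L^q$ singular-integral theory.
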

These are improved versions of Strichartz estimates from the following
point of view.  In ii) the inequality for $(\partial_r^2 +
\frac1r \partial_r - \frac{4}{r^2}) u=H_2 u$ is the Strichartz
estimate for $H_2 u$ which follows from \eqref{STR} and \eqref{STRE};
our statement is stronger in saying that each term satisfies the
Strichartz estimate. A similar remark is in place for part i).

\begin{proof} i) The proof follows the same lines as the one in ii),
  though it is easier.

  ii) Without restricting the generality of the argument we can assume
  that $f^-=0$.  Based on the representation formula
  \[
  u^- = \int e^{it\xi^2} J_2(r\xi) \mathcal F_2 u_0^- (\xi) \xi d\xi
  \]
  and by using \eqref{derB}, we compute
  \[
  \partial_r u^- = \frac12 \int e^{it\xi^2} (J_1(r\xi) ) -
  J_3(r\xi))\xi \mathcal F_2 u_0^- (\xi) \xi d\xi =e^{itH_1}g_1 -
  e^{itH_3} g_3
  \]
  with $R_1 g_1, R_3 g_3 \in \dot H^1$. Using \eqref{STR} and
  \eqref{STRO} we obtain the conclusion for $|\partial_r^2 u|$ and
  $|\frac1r \partial_r u|$. Since the estimate holds true for
  $H^-f^-$, it follows for $\frac{|f^-|}{r^2}$.

  iii) We note that, by using \eqref{derB}, $(\partial_r-\frac1r) J_1
  = r \frac1r \partial_r J_1 = -r \frac{J_2}{r^2}= -\frac{J_2}{r}$.
  Using this and again \eqref{derB} we start in part i) and compute
  \[
  \begin{split}
    (\partial_r - \frac1r)\partial_r u^- & = - \frac1{2r} \int e^{it\xi^2} J_2(r\xi) \xi \mathcal F_2 u_0^- (\xi) \xi d\xi \\
    & - \frac14 \int e^{it\xi^2} (J_2(r\xi)-J_4(r\xi)) \xi^2 \mathcal
    F_2 u_0^- (\xi) \xi d\xi
  \end{split}
  \]
  The conclusion follows then from \eqref{STR} and \eqref{STRO}.
\end{proof}

\subsection{Setup and the Cauchy theory}
In order to make estimates shorter, we make the following notation
convention $\| f^\pm \| = \| f^+ \| + \| f^- \|$ for various $f$'s and
$\| \cdot \|$ involved in the rest of the paper.

Since our non-linear analysis relies mostly on the $L^4_{t,r}$ norm,
we define the Strichartz norm of $f:I \times \R^2 \rightarrow \C$ by
$S_I(f):= \| f \|^4_{L^4(I \times \R)}$.  If $t_0 \in I$ then we
define $S_{I,\leq t_0} f = \| 1_{I \cap (-\infty,t_0]} f \|^4_{L^4}$
and $S_{I,\geq t_0} f = \| 1_{I \cap [t_0, \infty)} f \|^4_{L^4}$.

We say that a solution $\psi^\pm: I \times \R \rightarrow \C$ blows up
forward in time if $S_{I,\geq t} \psi^\pm = + \infty, \forall t \in
I$.  Similarly $\psi^\pm$ blows up backward in time if $S_{I,\leq t}
\psi^\pm = + \infty, \forall t \in I$.

A possibility that may occur is that for some interval $I$, $S_{I,\geq
  t_0} \psi^+ = +\infty$ while $S_{I,\geq t_0} \psi^- < \infty$, or
any other combination. However from \eqref{str} it follows that
solutions satisfying the compatibility condition \eqref{compnew} we
have that $S_J(\psi^+) \approx S_J(\psi^-)$ on any time interval
$J$. Therefore for such solutions (which we will be mainly interested
in) the above scenario is ruled out.

Let $\psi_+^\pm \in L^2$. We say that the solution $\psi^\pm: I \times
\R \rightarrow \C$ scatters forward in time to $\psi_+^\pm$ iff $sup
I= + \infty$ and $\lim_{t \rightarrow \infty}
M(\psi^\pm(t)-e^{itH^\pm}\psi_+^\pm)=0$.  We say that the solution
$\psi^\pm: I \times \R \rightarrow \C$ scatters backward in time to
$\psi_{-}^\pm$ iff $inf I= - \infty$ and $\lim_{t \rightarrow -\infty}
M(\psi^\pm(t)-e^{itH^\pm}\psi_-^\pm)=0$.

Our first theorem provides the general Cauchy theory for
\eqref{psieq}.

\begin{theo} \label{CT} Consider the problem \eqref{psieq} (with
  $\psi_2,A_2,A_0$ given by \eqref{rela}, \eqref{A2}, \eqref{A0}) with
  $\psi_0^\pm \in L^2$. Then there exists a unique maximal-lifespan
  solution pair $(\psi^+,\psi^-): I \times \R^2$ with $t_0 \in I$ and
  $\psi^\pm(t_0)=\psi_0^\pm$ with the additional properties:

  i) I is open.

  ii) (Forward scattering) If $\psi^\pm$ do not blow up forward in
  time, then $I_+=[0,\infty)$ and $\psi^\pm$ scatters forward in time
  to $e^{itH^\pm} \psi_+^\pm$ for some $\psi_+^\pm \in L^2$.

  Conversely, if $\psi_+^\pm \in L^2$, then there exists a unique
  maximal-lifespan solution $\psi^\pm$ which scatters forward in time
  to $e^{itH^\pm} \psi_+^\pm$.

  iii) (Backward scattering) A similar statement to ii) holds true for
  the backward in time problem.

  iv) (Small data scattering) There exist $\epsilon > 0$ such that if
  $M(\psi_0^\pm) \leq \epsilon$ then $S_{\R}(\psi^\pm) \les
  M(\psi_0^\pm)$. In particular, the solution does not blow up and
  we have global existence and scattering in both directions.

  v) (Uniformly continuous dependence) For every $A > 0$ and $\epsilon
  > 0$ there is $\delta > 0$ such that if $\psi^\pm$ is a solution
  satisfying $S_J(\psi^\pm) \leq A$ and $t_0 \in J$, and such that
  $M(\psi_0^\pm - \tilde \psi_0^\pm) \leq \delta$, then there exists a
  solution such that $S(\psi^\pm -\tilde \psi^\pm) \leq \epsilon$ and
  $M(\psi^\pm(t)-\tilde \psi^\pm(t)) \leq \epsilon, \forall t \in J$.

  vi) (Stability result) For every $A > 0$ and $\epsilon > 0$ there
  exists $\delta > 0$ such that if $S_J(\psi^\pm) \leq A$, $\psi^\pm$
  approximate \eqref{psieq} in the following sense
  \[
  \| (i \partial_t + H^\pm) \psi^\pm - N^\pm(\psi^\pm) \|_{L^\frac43(J
    \times \R)} \leq \delta,
  \]
  $t_0 \in J, \tilde \psi_0^\pm \in L^2$ and
  $S_J(e^{i(t-t_0)H^\pm}(\psi^\pm(t_0) - \tilde \psi_0^\pm)) \leq
  \delta$, then there exists a solution $\tilde \psi^\pm$ on $I$ to
  \eqref{psieq} with $\tilde \psi^\pm(t_0)= \tilde \psi_0^\pm$ and
  $S_J(\psi^\pm - \tilde \psi^\pm) \leq \epsilon$.

  vii) (Additional regularity) Assume that, in addition, $R_\pm
  \psi^\pm_0 \in H^s$ (recall \eqref{ext}) for $s \in \{1,2,3\}$. If
  $J$ is an interval such that $S_{J}(\psi^\pm) \leq A < + \infty$,
  then the solution $\psi^\pm$ satisfies
  \begin{equation} \label{addreg} \| R_\pm \psi^\pm(t) \|_{H^s} \les_A
    \| R_\pm \psi^\pm_0 \|_{H^s}, \qquad \forall t \in J
  \end{equation}  
  and it has Lipschitz dependence with respect to the initial
  data.
\end{theo}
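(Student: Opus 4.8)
The entire theorem rests on a single cubic estimate in the critical Strichartz space $L^4(J\times\R)$, and the plan is to establish it first. All the coefficients entering the nonlinearities \eqref{psin}---namely $A_0$ from \eqref{A0}, $\frac{A_2+1}{r^2}$ from \eqref{A2}, and the factor $\frac1r\Im(\psi_2\bar\psi^\pm)$ with $\frac{\psi_2}r=\frac{\psi^+-\psi^-}{2i}$ from \eqref{rela}---are quadratic in the pair $(\psi^+,\psi^-)$, and each carries exactly the weights needed for one of the operator bounds in \eqref{rdrm} to absorb the singular factors. Concretely, for the $A_2$ term I would write $A_2+1=-\frac14[r^{-1}\bar\partial_r]^{-1}(|\psi^+|^2-|\psi^-|^2)$ and invoke the second bound in \eqref{rdrm} to obtain $\|\frac{A_2+1}{r^2}\|_{L^2}\les\|\psi^+\|_{L^4}^2+\|\psi^-\|_{L^4}^2$; for $A_0$ I would use the first bound in \eqref{rdrm} applied to $[r\partial_r]^{-1}\Re(\bar\psi^+\psi^-)$; and the $\psi_2$ term needs no operator at all, since the $\frac1r$ is already absorbed into $\frac{\psi_2}r$. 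Pairing these $L^2$ coefficient bounds with $\psi^\pm\in L^4$ by H\"older ($\frac34=\frac12+\frac14$) and then integrating in time yields the master estimate
\[
\|N^\pm(\psi^\pm)\|_{L^{4/3}(J\times\R)}\les\bigl(\|\psi^+\|_{L^4(J\times\R)}+\|\psi^-\|_{L^4(J\times\R)}\bigr)^3,
\]
together with its polarized difference version. I stress that, unlike in Lemma \ref{a2p2c}, this requires neither the compatibility relation \eqref{compnew} nor the mass bound $<2\sqrt2$, since we argue directly from the algebraic and nonlocal formulas.

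With the master estimate in hand, parts (i)--(iv) are the standard critical Cauchy machinery. For (iv) I would run a contraction in a ball $\{\|\psi^\pm\|_{L^4(\R\times\R)}\les\|\psi_0^\pm\|_{L^2}\}$, using \eqref{STR} with the admissible pair $(4,4)$ together with the master estimate; the free term $e^{itH^\pm}\psi_0^\pm$ is small in $L^4$ once the mass is small, which makes the cubic map a contraction, and mass conservation upgrades this to global existence and scattering. Local well-posedness for arbitrary $L^2$ data follows from the same contraction on a time interval short enough to make the free evolution small in $L^4$, and concatenating such intervals produces the open maximal interval $I$ of (i). Parts (ii)--(iii) are then the continuation/scattering dichotomy: if $S_{I,\ge t}\psi^\pm$ remains finite, the master estimate controls the Duhamel tail and forces global finiteness of the $L^4$ norm, after which the usual Cauchy-criterion argument extracts an $L^2$ scattering profile $\psi_+^\pm$; conversely the small-data theory constructs the wave operator.

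The longest step is the critical stability/long-time perturbation statement (vi), from which (v) follows by specialization. Here I would partition $J$ into finitely many subintervals on each of which $S(\psi^\pm)$ lies below a threshold fixed by the implicit constant in the master estimate, set up the difference equation for $\tilde\psi^\pm-\psi^\pm$ with the approximate-solution error as inhomogeneous data, and close a bootstrap on each subinterval, summing the resulting increments across the partition. As always for critical problems, the delicate point is that the number of subintervals depends on $A$ while the smallness $\delta$ must be chosen only afterwards; the coupling of $\psi^+$ and $\psi^-$ is handled by treating the two difference equations simultaneously and carrying the $\pm$-sum convention throughout.

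Finally, for the persistence of regularity (vii) I would again partition $J$ into intervals of small $S(\psi^\pm)$ and propagate the two norms $\|R_+\psi^+\|_{H^s}$ and $\|R_-\psi^-\|_{H^s}$ simultaneously. This is where the refined Strichartz bounds of Lemma \ref{LB} become indispensable: commuting $H^\pm$ through \eqref{psin} generates individual singular terms such as $\frac{\partial_r\psi^-}{r}$ and $\frac{\psi^-}{r^2}$, and only the ``each term separately'' form of Lemma \ref{LB}---rather than the plain bound \eqref{STR} for the combination $H^\pm\psi^\pm$---allows one to absorb them, after a Leibniz expansion and the two-dimensional Sobolev embeddings for $R_\pm\psi^\pm$ (the relevant product computations already appear in the proof of Lemma \ref{a2p2c} v). I expect this regularity propagation, especially the $s=2$ and $s=3$ cases with their proliferation of weighted product terms, to be the main technical obstacle, conceptually routine though it is; the Lipschitz dependence claimed in (vii) then follows by differencing the same estimates.
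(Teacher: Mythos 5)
Your proposal is correct and takes essentially the same route as the paper: the same cubic bounds \eqref{Nest}--\eqref{Ndif} obtained from \eqref{rdrm}, the same standard critical Cauchy/stability machinery for parts i)--vi), and the same strategy for part vii) (Leibniz expansion of derivatives of the nonlinearity, closed via the term-by-term improved Strichartz bounds of Lemma \ref{LB} and Sobolev embeddings, with smallness gained by subdividing $J$). The only detail the paper makes explicit that you leave implicit is that the nonlocal term $[r\partial_r]^{-1}\Re(\bar\psi^+\psi^-)\,\psi^\pm$ is exactly the one that blocks the usual two-dimensional frequency-envelope argument (since $\psi^+$ and $\psi^-$ need different phases for regular 2D extensions), which is why it gets the dedicated weighted estimates \eqref{regineq}; your uniform direct treatment covers this case in the same way.
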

The above results are concerned with general solutions of
\eqref{psieq}. However, our interest lies in solutions which
correspond to geometric maps.  The next result completes the Cauchy
theory for solutions of \eqref{psieq} which satisfy the compatibility
condition \eqref{compnew}.  The system \eqref{psieq} does not directly
involve the variable $\psi_0$ which is defined in this context by
\eqref{psizero}.

\begin{theo} \label{CT-CC} i) If $\psi^\pm_0 \in L^2$ satisfying the
  compatibility condition \eqref{compnew}, then $\psi^\pm(t)$
  satisfies the compatibility condition \eqref{compnew} for each $t
  \in I$. If, in addition, $R_\pm \psi^\pm_0 \in H^3$ then
  \eqref{compat} and \eqref{curb} are satisfied.
  
  ii) If the solution satisfies the compatibility condition
  \eqref{compnew} and it does not blow up in time then the two
  scattering states (described in ii)) are related by
  \begin{equation} \label{asscat}
    \partial_r r (\psi_+^+ - \psi_+^-) = \psi_+^+ + \psi_+^-
  \end{equation}  
  Conversely, if $\psi_+^\pm \in L^2$ satisfy \eqref{asscat}, then the
  unique maximal-lifespan solution $\psi^\pm$ which scatters to
  $e^{itH^\pm} \psi_+^\pm$ (constructed in part ii)) satisfy the
  compatibility condition \eqref{compnew}.  A similar statement holds
  true for the backward in time scattering.

  iii) If $\psi^\pm$ satisfy the compatibility conditions, then for
  every interval $J \subset I$ (I being the maximal-lifespan interval)
  the following holds true
  \begin{equation} \label{str} \| \psi^+ \|_{L^4(J)} \approx \| \psi^-
    \|_{L^4(J)}
  \end{equation}
  where the constants involved in the use $\approx$ are independent of
  the interval $J$.

\end{theo}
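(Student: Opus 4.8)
I would establish the three parts in the order iii), i), ii), since the later parts lean on the reconstruction theory. \emph{Part iii)} is the quickest. By mass conservation for \eqref{psieq} together with the compatibility relation $\|\psi^+(t)\|_{L^2}=\|\psi^-(t)\|_{L^2}$, at every $t$ one has $\|\psi^-(t)\|_{L^2}<2\sqrt2$, so Lemma \ref{a2p2c}(iv) applies at fixed time with $p=4$ and yields $\|\psi^+(t)\|_{L^4_r}\les\|\psi^-(t)\|_{L^4_r}$ with a constant depending only on the (uniform) $L^2$ bound. Raising to the fourth power and integrating in $t$ over $J$ gives $\|\psi^+\|_{L^4(J)}\les\|\psi^-\|_{L^4(J)}$; the opposite inequality follows from the symmetric reconstruction of $\psi^-$ out of $\psi^+$. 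Since no constant sees $J$, this is \eqref{str}.

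\emph{Part i).} The efficient route is geometric at high regularity, then a density argument. If $R_\pm\psi_0^\pm\in H^3$ then, by Proposition \ref{recprop} and subsection \ref{sreg}, the data come from a genuine map $u_0\in\dot H^1\cap\dot H^3$; solving \eqref{SM} via Theorem \ref{clasic} and reading off the Coulomb fields of the resulting classical map produces $\psi^\pm(t)$ which solve \eqref{psieq} and which, being the differentiated fields of an honest orthonormal frame, satisfy \eqref{compat} and \eqref{curb}, hence \eqref{compnew}. By the uniqueness in Theorem \ref{CT} these agree with the given solution, so \eqref{compat}, \eqref{curb}, \eqref{compnew} hold on the whole lifespan. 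For general $L^2$ data I would approximate $\psi_0^-$ by compatible $H^3$ data (reconstruct the map by Proposition \ref{recprop} and mollify), use the continuous-dependence and stability statements Theorem \ref{CT}(v),(vii) to pass to the limit, and observe that \eqref{compnew} is $L^2$-closed: rewritten through \eqref{comp1} it says $\psi^+=\psi^-+2i\psi_2/r$ with $(\psi_2,A_2)$ the unique solution of the reconstruction system, and $\psi^-\mapsto(\psi_2,A_2,\psi^+)$ is Lipschitz on $L^2$ by \eqref{strfix} and \eqref{2estdif}. Equivalently, and this is the form part ii) will use, one checks from \eqref{psieq} and \eqref{A0} that the compatibility defect solves a homogeneous linear evolution, so that it propagates from zero data and its $L^2$ size is controlled in both time directions.

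\emph{Part ii).} I would first recast \eqref{asscat} algebraically. Solving the reconstruction ode \eqref{comp1} with $A_2$ frozen at its limit $-1$ shows that \eqref{asscat} is equivalent to $\psi_+^+=\Lambda\psi_+^-$, where $\Lambda:=1+2[r\partial_r]^{-1}$. A short computation with \eqref{derB} (using $[r\partial_r]^{-1}J_2(r\xi)=-J_1(r\xi)/(r\xi)$ and the recurrence $\tfrac{2}{r\xi}J_1=J_0+J_2$) gives $\Lambda J_2(r\xi)=-J_0(r\xi)$; consequently $\Lambda$ is an $L^2$-isometry that carries order-$2$ Hankel data to order-$0$ data and intertwines the free flows, $e^{itH^+}\Lambda=\Lambda e^{itH^-}$. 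Both directions then reduce to the single claim that $\|\psi^+(t)-\Lambda\psi^-(t)\|_{L^2}\to0$ as $t\to\infty$: by exact compatibility (part i) one has $\psi^+(t)-\Lambda\psi^-(t)=2i(\psi_2(t)-\tilde\psi_2(t))/r$, where $\tilde\psi_2=-ir[r\partial_r]^{-1}\psi^-(t)$ solves \eqref{comp1} with $A_2$ replaced by $-1$, and this difference is governed by the $(A_2+1)$--corrections through the bound \eqref{2estdif}. Granting the claim, the forward direction follows on applying the $L^2$-isometry $e^{-itH^+}$ and the intertwining: $\psi_+^+-\Lambda\psi_+^-=\lim_{t\to\infty}e^{-itH^+}\big(\psi^+(t)-\Lambda\psi^-(t)\big)=0$, which is \eqref{asscat}. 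For the converse, \eqref{asscat} makes the free evolutions exactly compatible-at-infinity for all $t$ by the intertwining, the same limit forces the compatibility defect of the nonlinear solution to vanish as $t\to\infty$, and part i) (the defect solving a homogeneous linear evolution whose $L^2$ size is preserved) pushes exact compatibility back onto all of $I$. The backward statements are identical.

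\emph{Main obstacle.} The crux is the limit $\|\psi^+(t)-\Lambda\psi^-(t)\|_{L^2}\to0$. This is \emph{not} a routine application of \eqref{loc2est} and \eqref{2estdif}, because the conserved masses of $\psi^\pm$ do not decay — they merely disperse to spatial infinity, so a naive global comparison with the $A_2=-1$ system leaves an $O(1)$ error and $\|A_2+1\|_{L^\infty}$ need not tend to $0$. What rescues the argument is that compatibility \eqref{compnew}, in the dispersive regime $r\sim t$, forces the two profiles $\psi^+(t)$ and $\psi^-(t)$ to agree to leading order (so that $A_2+1$ is small by cancellation rather than by absolute smallness of the local mass). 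Turning this heuristic into quantitative local-in-$r$ decay, and then feeding it into the localized reconstruction bounds, is the step that requires genuine work. The secondary difficulty is the rough-data propagation in part i), where \eqref{compnew} involves a derivative and is stable only after its reformulation through the Lipschitz reconstruction map of Lemma \ref{a2p2c}.
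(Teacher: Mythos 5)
Parts iii) and i) of your proposal are essentially sound. Part iii) is the paper's own argument (the fixed-time bound \eqref{strfix} plus its mirror image starting from $\psi^+$). For part i) you take a genuinely different route: instead of deriving an evolution equation for the defect $F=D_2\psi_1-D_1\psi_2$ and running an energy/Gronwall argument, which is what the paper does, you reconstruct the map (Proposition \ref{recprop}), run the classical flow (Theorem \ref{clasic}), and invoke uniqueness from Theorem \ref{CT}; this is in substance how the paper proves Proposition \ref{SMR}, and it works, with two caveats: the classical solution is only local in time, so "hold on the whole lifespan" requires an open/closed continuation argument (using that $S_J(\psi^\pm)<\infty$ on compact $J\subset I$, so $H^3$ regularity propagates by part vii) of Theorem \ref{CT}); and this route needs $\|\psi_0^-\|_{L^2}<2\sqrt{2}$ for the reconstruction, an assumption the paper's defect argument does not use. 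Your intertwining algebra in part ii) --- that \eqref{asscat} reads $\psi_+^+=\Lambda\psi_+^-$ with $\Lambda=1+2[r\partial_r]^{-1}$, that $\Lambda J_2(r\xi)=-J_0(r\xi)$, hence that $\Lambda$ is an $L^2$-isometry satisfying $e^{itH^+}\Lambda=\Lambda e^{itH^-}$ --- is correct, and is a cleaner packaging than the paper's direct computation with the quantity $f(t)$ and conservation of the $\dot H^{-1}_e$ norm.

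The genuine gap is the one you flag yourself: the limit $\|\psi^+(t)-\Lambda\psi^-(t)\|_{L^2}\to 0$ is never proved, and both directions of part ii) rest on it. Moreover, your diagnosis of what should close it points in the wrong direction. The paper closes precisely this gap with a linear dispersive ingredient, estimate \eqref{pd}, proved in the Appendix: for $f\in L^2$ the averaged free wave $r\int_r^\infty s^{-2}e^{itH^-}f\,ds$ tends to $0$ uniformly in $r$ (proved on a dense class of frequency-localized data of the form $f=H^-g$ by integration by parts and the pointwise decay \eqref{fflow}). Since $\psi^-(t)=e^{itH^-}\psi_+^-+o_{L^2}(1)$ and $\psi_2$ is, to leading order, exactly such an average of $\psi^-$, this yields $\|\psi_2(t)\|_{L^\infty}\to 0$; then $1+A_2=|\psi_2|^2/(1-A_2)$ gives $\|1+A_2(t)\|_{L^\infty}\to 0$ --- contrary to your assertion that $\|A_2+1\|_{L^\infty}$ need not decay --- and \eqref{limcomp} follows, which is exactly your key claim. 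The mechanism is thus oscillation of the dispersing wave killed by the smooth averaging kernel, applied to $\psi^-$ alone; it is not a cancellation between the $\psi^+$ and $\psi^-$ profiles enforced by \eqref{compnew}. Your heuristic would in any case be circular in the forward direction, since the asymptotic agreement of the two profiles is a consequence of the relation \eqref{asscat} you are trying to prove.

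A secondary error: in your fallback for part i) and in the converse of part ii) you assert that the compatibility defect "solves a homogeneous linear evolution whose $L^2$ size is preserved." The defect equation --- the paper's \eqref{Freq}, whose covariant derivation is itself the bulk of the paper's proof of part i) and which you do not supply --- is linear in $F$ but carries potential terms $(A_0+(A_2^2-1)r^{-2})F+\Re(\frac{F}{r}\bar\psi_1)\psi_1+r^{-2}\Re(\frac{F}{r}\bar\psi_2)\psi_2$, so its $L^2$ norm is not conserved. Forward propagation from $F(0)=0$ uses Gronwall; in the converse of part ii), where compatibility is known only asymptotically, your geometric part i) cannot be applied at any finite time, and one must argue backward from infinity, as the paper does via Duhamel and Strichartz, using the smallness of $\|\psi^\pm\|_{L^4([T,\infty)\times\R)}$ for $T$ large to force $F\equiv 0$ on $[T,\infty)$, and only then invoking part i).
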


As a consequence of these theorems we are able to prove the following
result
\begin{prop} \label{SMR} If $\psi^\pm_0 \in L^2$ satisfies the
  compatibility conditions \eqref{compnew}, $R_\pm \psi^\pm_0 \in H^2$
  and $\psi^\pm(t)$ is the solution of \eqref{psieq} on $I$ then the
  map $u(t)$ constructed in Proposition \ref{recprop} (for each $t$)
  is a Schr\"odinger map.
\end{prop}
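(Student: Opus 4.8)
The plan is to reduce the statement on the whole interval $I$ to a purely local one in time, and then to identify the reconstructed family $u(t)$ with the genuine Schr\"odinger map flow issued from $u(t_0)$, using the local classical theory of Theorem \ref{clasic} together with the two uniqueness statements already at our disposal. First I would record that the hypotheses make the problem classical: by Proposition \ref{greg} the condition $R_\pm \psi^\pm_0 \in H^2$ is equivalent to $u_0 := u(t_0) \in \dot H^1 \cap \dot H^3$, so the reconstructed initial map lies precisely in the regularity class of Theorem \ref{clasic}. Since the compatibility condition \eqref{compnew} is propagated by the flow (Theorem \ref{CT-CC}(i)) and the masses $\| \psi^\pm(t) \|_{L^2}$ are conserved and remain below $2\sqrt{2}$, Proposition \ref{recprop} does produce a well-defined map $u(t)$, together with its frame $(u,v,w)$ and gauge elements $\psi_1,\psi_2,A_2$, for every $t \in I$; moreover $t \mapsto u(t)$ is continuous into $\dot H^1$ by the Lipschitz bound \eqref{udif} and the $L^2$-continuity of $t \mapsto \psi^-(t)$.

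The heart of the argument is a matching step. Fix $t_0 \in I$ and apply Theorem \ref{clasic} to obtain a classical Schr\"odinger map $\hat u$ on some interval $[t_0,t_0+T]$ with $\hat u(t_0) = u_0$. Passing $\hat u$ through the Coulomb-gauge construction of Section \ref{seccoulomb} yields differentiated fields $\hat\psi^\pm$, and the regularity established in subsection \ref{sreg} makes every manipulation of that section rigorous, so that $\hat\psi^\pm$ is a genuine (strong, hence Duhamel) solution of \eqref{psieq}. Because the gauge normalization \eqref{bcvw} and the reconstruction of Proposition \ref{recprop} are mutually inverse, we have $\hat\psi^\pm(t_0) = \psi^\pm_0$. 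The uniqueness of maximal-lifespan $L^2$ solutions in Theorem \ref{CT}, together with the persistence of $H^2$ regularity in part (vii), then forces $\hat\psi^\pm(t) = \psi^\pm(t)$ on $[t_0,t_0+T] \cap I$. Feeding this equality into the uniqueness clause of Proposition \ref{recprop}---both $u(t)$ and $\hat u(t)$ are reconstructed from the same $\psi^-(t)$ under the same conditions \eqref{bcvw} and $u(\infty)=-\vec{k}$---yields $u(t) = \hat u(t)$ on that interval. Hence $u$ solves \eqref{SM} locally around $t_0$.

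Finally I would upgrade this to all of $I$ by continuation. Since $R_\pm \psi^\pm(t)$ stays in $H^2$ on any subinterval with finite Strichartz norm (Theorem \ref{CT}(vii)), the map $u(t)$ remains in $\dot H^1 \cap \dot H^3$ throughout $I$, so Theorem \ref{clasic} may be restarted at every time; the resulting local Schr\"odinger maps agree on overlaps by the uniqueness for \eqref{SM} and patch into a solution on all of $I$. The step I expect to be the main obstacle is the matching at $t_0$: one must verify that the differentiated fields of the classical solution $\hat u$ satisfy \emph{exactly} the system \eqref{psieq} with the prescribed data, rather than a gauge-rotated variant, which is precisely where the uniform-in-time normalization \eqref{bcvw} and the regularity bookkeeping of subsection \ref{sreg} are essential; once this is secured, everything else is a bilateral application of uniqueness.
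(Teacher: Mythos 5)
Your proof is correct and follows essentially the same route as the paper's: reconstruct $u_0 \in \dot H^1 \cap \dot H^3$ from $\psi^\pm_0$ via Propositions \ref{recprop} and \ref{greg}, run the classical solution of Theorem \ref{clasic} through the Coulomb gauge construction of Section \ref{seccoulomb}, and identify its gauge fields with $\psi^\pm(t)$ by the uniqueness part of Theorem \ref{CT}, so that the reconstruction of Proposition \ref{recprop} returns the Schr\"odinger map. Your extra bookkeeping (the explicit matching $\hat\psi^\pm(t_0)=\psi^\pm_0$ and the patching over all of $I$) only makes explicit what the paper's terser argument leaves implicit.
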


\begin{proof}[Proof of Theorem \ref{CT}]  
  Given the form of the nonlinearities $N^\pm(\psi^\pm)$ and the
  formulas for $\psi_2, A_2, A_0$, we obtain using \eqref{rdrm}
  \[
  \| \frac{\psi_2}r \|_{L^4} \les \| \psi^\pm \|_{L^4}, \quad \|
  \frac{A_2+1}{r^2} \|_{L^2} + \| A_0 \|_{L^2} \les \| \psi^\pm
  \|^2_{L^4}
  \]
  Altogether, these estimates imply that
  \begin{equation} \label{Nest}
  \| N^\pm(\psi^\pm) \|_{L^\frac43} \les \| \psi^\pm \|^3_{L^4}
  \end{equation}
  In a similar fashion, we also obtain that
  \begin{equation} \label{Ndif}
    \begin{split}
      \| N^\pm(\psi^\pm) - N^\pm(\tilde \psi^\pm) \|_{L^\frac43} \les
      \| \psi^\pm - \tilde \psi^\pm \|_{L^4} (\| \psi^\pm \|^2_{L^4} +
      \| \tilde \psi^\pm \|^2_{L^4})
    \end{split}
  \end{equation}
  Based on a standard theory, by using the Strichartz estimates and
  \eqref{Ndif}, the system \eqref{psieq} has a unique solution with
  $\psi^\pm(t_0)=\psi^\pm_0$ on some maximal life-time open interval
  $I$ with $t_0 \in I$.

  If $\| \psi^\pm \|_{L^4(\R_+ \times \R)}$ is finite, then $\psi^\pm$
  scatters at infinity to $e^{itH^\pm} \psi^\pm_+$. Constructing
  solutions to scattering states satisfying \eqref{asscat} is done in
  the usual manner.  Precisely, we start with linearizing near the
  assypmtotic states
  \[
  \psi^- = e^{itH^-} \psi_+^- + e^-, \qquad \psi^+ = e^{itH^+}
  \psi_+^+ + e^+
  \]
  and generate the following system for $e^-,e^+$:
  \begin{equation*}
    \left\{ \begin{array}{l}
        (i \partial_t  +  H^{-}) e^- = (A_0 - 2 \frac{A_2+1}{r^2} 
        - \frac1{r}\Im{(\psi_2 \bar{\psi}^-)}) \psi^-
        \cr
        (i \partial_t  +  H^+) e^+ = ( A_0 + 2 \frac{A_2+1}{r^2}
        + \frac1{r}\Im{(\psi_2 \bar{\psi}^+)}) \psi^+
      \end{array} \right.
  \end{equation*}
  with zero data at $\infty$. By choosing $T$ large enough so that $\|
  e^{itH^-} \psi_+^- \|_{L^4([T,+\infty)} + \| e^{itH^+} \psi_+^+
  \|_{L^4([T,+\infty)} \ll 1$, the solution to this system is obtained
  by a fixed point argument, which ensures existence and uniqueness.
  This finishes the proof of part i)-iii). Part iv)-vi) are standard
  in light of \eqref{Ndif}.

  Parts vii) is usually standard, but our nonlinearity cannot be
  completely reduced to a form where we can invoke the usual
  arguments. We rewrite the nonlinear terms as follows
  \[
  \begin{split}
    A_0 - 2 \frac{A_2+1}{r^2} - \frac1{r}\Im{(\psi_2 \bar{\psi}^-)}
    & = -\frac{|\psi^-|^2}2 +[r\partial_r]^{-1} \Re(\bar \psi^+ \psi^-) + \frac1{2r^2} \int_0^r (|\psi^+|^2 - |\psi^-|^2)sds \\
    A_0 + 2 \frac{A_2+1}{r^2} + \frac1{r}\Im{(\psi_2 \bar{\psi}^+)} &
    = -\frac{|\psi^+|^2}2 +[r\partial_r]^{-1} \Re(\bar \psi^+ \psi^-)
    - \frac1{2r^2} \int_0^r (|\psi^+|^2 - |\psi^-|^2)sds
  \end{split}
  \]
  Without the term $[r\partial_r]^{-1} \Re(\bar \psi^+ \psi^-)$, the
  analysis would be standard.  Indeed, the terms $|\psi^\pm|^2
  \psi^\pm$ can be extended to their two-dimensional regular terms
  $|R_\pm \psi^\pm|^2 R_\pm \psi^\pm$, and for the integral term
  $\frac1{2r^2} \int_0^r (|\psi^+|^2 - |\psi^-|^2)sds$ one notices
  that the operator $\frac1{r^2} \int_0^r \cdot sds$ keeps the two
  dimensional frequency localization; then one could use frequency
  envelopes techniques to deal with the regularity of these terms.
  However, the term $[r\partial_r]^{-1} \Re(\bar \psi^+ \psi^-) $
  cannot be extended to a two-dimensional variant with regular terms
  since $\psi^\pm$ require different phases for completion to regular
  two-dimensional terms.

  We provide a full analysis of the term
  \[
  N_1^\pm = [r\partial_r]^{-1} \Re(\bar \psi^+ \psi^-) \psi^\pm
  \]
  This analysis can be extended to the other two terms in
  $N^\pm(\psi^\pm)$.

  Since $S_I(\psi^\pm) \leq A$, from \eqref{Nest} and \eqref{STR} we obtain
  \[
  \| \psi^\pm \|_{L^3L^6(I \times \R)} \les_A 1
  \]
  Therefore it makes sense to define
  \[
  \begin{split}
    & B=\| \partial_r \psi^\pm \|_{L^3L^6} +   \| \frac{\psi^-}r \|_{L^3L^6}  \\
    &C = \| \partial_r^2 \psi^\pm \|_{L^3 L^6} + \| \frac1r \partial_r
    \psi^\pm \|_{L^3 L^6}+
    \| \frac{\psi^-}{r^2}  \|_{L^3 L^6} \\
    & D = \| \partial_r H^\pm \psi^\pm \|_{L^3L^6} + \| \frac1r H^\pm
    \psi^- \|_{L^3L^6} + \| \frac1r (\partial_r-\frac1r) \partial_r
    \psi^- \|_{L^3L^6}
  \end{split}
  \]
  We prove the following estimates
  \begin{equation} \label{regineq}
    \begin{split}
      & \| \partial_r  N_1^\pm \|_{L^1L^2} + \| \frac1r N_1^- \|_{L^1 L^2}  \les_A B \\
      & \| H^\pm N_1^\pm \|_{L^1 L^2} \les_{A}  C + B^2 \\
      & \| \partial_r H^\pm N_1^\pm \|_{L^1L^2} + \| \frac1r H^- N_1^-
      \|_{L^1 L^2} \les_A D + BC
    \end{split}
  \end{equation}
  Similar estimates can be established for the other two terms
  in $N^\pm(\psi^\pm)$, and when combined with the results in
  Lemma \ref{LB}, a standard argument establishes the conclusion in
  \eqref{addreg}.

  We now turn to the proof of \eqref{regineq}. We compute
  \[
  \partial_r N_1^\pm = \partial_r \left( [r\partial_r]^{-1} \Re(\bar
    \psi^+ \psi^-) \right) \psi^\pm + [r\partial_r]^{-1} \Re(\bar
  \psi^+ \psi^-) \partial_r \psi^\pm
  \]
  and estimate
  \[
  \| \partial_r N_1^\pm \|_{L^1L^2} \les \| \psi^+ \|_{L^3L^6} \|
  \frac{ \psi^-}r \|_{L^3 L^6} \| \psi^\pm \|_{L^3L^6} + \| \psi^\pm
  \|^2_{L^3L^6} \| \partial_r \psi^\pm \|_{L^3L^6}
  \]
  from which half of the first estimate in \eqref{regineq} follows;
  the second half follows in a similar manner.

  We continue with
  \[
  \begin{split}
    H^\pm N_1^\pm = & \Delta \left( [r\partial_r]^{-1} \Re(\bar \psi^+
      \psi^-) \right) \psi^\pm
    + 2 \partial_r \left( [r\partial_r]^{-1} \Re(\bar \psi^+ \psi^-) \right) \partial_r \psi^\pm \\
    & + \left( [r\partial_r]^{-1} \Re(\bar \psi^+ \psi^-) \right)
    H^\pm \psi^\pm
  \end{split}
  \]
  The last term is estimated by $\les_A C$, the second one is
  estimated by $\les_A B^2$, while the first one equals
  \[
  (\partial_r + \frac1r) \frac{\Re(\bar \psi^+ \psi^-)}r \cdot
  \psi^\pm = \frac{\Re(\partial_r \bar \psi^+ \cdot \psi^-)+ \Re(\bar
    \psi^+ \cdot \partial_r \psi^-)}r \cdot \psi^\pm
  \]
  and its $L^1L^2$ norm is estimated by
  \[
  \les ( \| \partial_r \psi^+ \|_{L^3L^6} \| \frac{\psi^-}r
  \|_{L^3L^6}+ \| \psi^+ \|_{L^3L^6} \| \frac{\partial_r \psi^-}r
  \|_{L^3L^6} ) \| \psi^\pm \|_{L^3L^6}
  \]
  from which the second estimate in \eqref{regineq} follows.

  For the third estimate we start with
  \[
  \begin{split}
    \partial_r H^\pm N_1^\pm & = \partial_r \Delta \left(
      [r\partial_r]^{-1} \Re(\bar \psi^+ \psi^-) \right) \psi^\pm
    + \Delta \left( [r\partial_r]^{-1} \Re(\bar \psi^+ \psi^-) \right) \partial_r \psi^\pm \\
    & + 2 \partial_r^2 \left( [r\partial_r]^{-1} \Re(\bar \psi^+
      \psi^-) \right) \partial_r \psi^\pm
    + 2  \partial_r \left( [r\partial_r]^{-1} \Re(\bar \psi^+ \psi^-) \right) \partial_r^2 \psi^\pm \\
    & + \partial_r \left( [r\partial_r]^{-1} \Re(\bar \psi^+ \psi^-)
    \right) H^\pm \psi^\pm + \left( [r\partial_r]^{-1} \Re(\bar \psi^+
      \psi^-) \right) \partial_r H^\pm \psi^\pm
  \end{split}
  \]
  The $L^1L^2$ norm of the sixth terms above is bounded by $\les_A
  D$. Using the previous arguments, the $L^1L^2$ norm of the second,
  fourth and fifth term is bounded by $\les_A BC$.  Since
  \[
  \begin{split}
    \partial_r^2 \left( [r\partial_r]^{-1} \Re(\bar \psi^+ \psi^-)
    \right) = \frac{\Re(\partial_r \bar \psi^+ \cdot \psi^-)+ \Re(\bar
      \psi^+ \cdot \partial_r \psi^-)}r - \frac{\Re(\bar \psi^+
      \psi^-)}{r^2}
  \end{split}
  \]
  it follows that the $L^1L^2$ norm of the third term above is bounded
  by $\les_A BC$.

  Finally we compute
  \[
  \begin{split}
    & \partial_r \Delta \left( [r\partial_r]^{-1} \Re(\bar \psi^+
      \psi^-) \right)
    = \partial_r \left( \frac{\Re(\partial_r \bar \psi^+ \cdot \psi^-)+ \Re(\bar \psi^+ \cdot \partial_r \psi^-)}r \right) \\
    & = \frac{\Re(\partial_r^2 \bar \psi^+ \cdot \psi^-)+ 2
      \Re(\partial_r \bar \psi^+ \cdot \partial_r \psi^-)}r + \Re(\bar
    \psi^+ \frac1r(\partial_r - \frac1r) \partial_r \psi^-)
  \end{split}
  \]
  and this allows us to estimate the first term by $BC+D$. This
  finishes the argument for \eqref{regineq}.
\end{proof}

\begin{proof}[Proof of Theorem \ref{CT-CC}]
  i) It is useful to rephrase this in terms of $\psi_1$, $\psi_2$,
  which are recovered linearly from $\psi^{\pm}$. Reverting the
  algebraic computation from Sections \ref{CG} and \ref{COG},
  $\psi_1$, $\psi_2$ solve the system \eqref{dtpsiab}. Then we seek to
  show that the relation $D_1 \psi_2 = D_2 \psi_1$ is preserved along
  the flow. For this we will derive an equation for the quantity
  \[
  F = D_2 \psi_1 - D_1 \psi_2
  \]
  It is convenient to use covariant notations. Given $\psi_1$,
  $\psi_2$ and $A_1=0$, $A_2$, we define $\psi_0$ via
  \eqref{psizero}. Then we consider the connection $(A_0,A_1,A_2)$ and
  identify its curvature.  The equation \eqref{A2} reads
  \[
  \partial_1 A_2 - \partial_2 A_1 = \Im(\psi_1 \bar \psi_2)
  \]
  The equation \eqref{A0} yields \eqref{a0}, which in turn gives
  \[
  \partial_1 A_0 - \partial_0 A_1 = \Im(\psi_1 \bar \psi_0) -
  \frac{1}{r^2}\Re ( F \bar \psi_2)
  \]
  For the remaining curvature component we use \eqref{psieq} to
  compute
  \[
  \begin{split}
    \partial_0 A_2 (r) = & \ \frac12 \int_{r}^\infty \Re ( \psi^+ \bar
    \psi^+_t) - \Re ( \psi^- \bar \psi^-_t) sds
    \\
    = & \ \frac12 \int_{r}^\infty \Im ( \psi^+ \Delta \bar \psi^+) -
    \Im ( \psi^- \Delta \bar \psi^-) sds \\ = & \ \frac12
    \int_{r}^\infty \partial_s \Im ( s \psi^+ \partial_s \bar \psi^+)
    -
    \partial_s \Im (s \psi^- \partial_s \bar \psi^-) ds \\ = & \ -
    \frac{r}2\left(\Im ( \psi^+ \partial_r \bar \psi^+) - \Im (
      \psi^- \partial_r \bar \psi^-)\right) \\ = & \ r \Re( \partial_r
    \psi_1 \frac{\bar \psi_2}r - \psi_1 \partial_r \frac{\bar
      \psi_2}r)
  \end{split}
  \]
  On the other hand we expect to have
  \[
  \partial_0 A_2 (r) = \Im ( \psi_0 \bar \psi_2) = \Re
  ((\partial_r+\frac1r) \psi_1 + i \frac{A_2}{r^2} \psi_2)\bar \psi_2)
  = \Re ((\partial_r+\frac1r) \psi_1\bar \psi_2)
  \]
  Taking the difference, what we actually get is
  \[
  \partial_0 A_2 - \partial_2 A_0 = \Im ( \psi_0 \bar \psi_2) - \Re [F
  \bar \psi_1]
  \]
  The next step is to identify the remaining two quantities in the
  compatibility conditions. We claim that
  \[
  D_1 \psi_0 - D_0 \psi_1 = - \frac{i}{r^2} D_2 F
  \]
  \[
  D_2 \psi_0 - D_0 \psi_2 = i (D_1+\frac{1}r) F
  \]
  For this it suffices to compare the two equations in \eqref{psiab},
  derived as before from \eqref{psizero}, with the equations
  \eqref{dtpsiab} obtained algebraically from \eqref{psieq}.  Now we
  can compute
  \[
  \begin{split}
    D_0 F = & \ D_0 D_2 \psi_1 - D_0 D_1 \psi_2
    \\
    = & \ D_2 D_0 \psi_1 - D_1 D_0 \psi_2 + i \Im ( \psi_0 \bar
    \psi_2) \psi_1 - i \Re (F \bar \psi_1) \psi_1 \\ & \ + i
    \Im(\psi_1 \bar \psi_0)\psi_2 - i \frac{1}{r^2} \Re ( F \bar
    \psi_2) \psi_2
    \\
    = & \ D_2 D_1 \psi_0 - D_1 D_2 \psi_0 + i(\frac{1}{r^2} D_2 D_2 +
    D_1 (D_1+\frac{1}r))F \\ & \ + i \Im ( \psi_0 \bar \psi_2) \psi_1
    - i \Re (F \bar \psi_1) \psi_1 + i \Im(\psi_1 \bar \psi_0)\psi_2 -
    i \Re (
    F \bar \psi_2) \psi_2 \\
    = & \ i(\frac{1}{r^2} D_2 D_2 + D_1 (D_1+\frac{1}r))F - i \Re (F
    \bar \psi_1) \psi_1- i \frac{1}{r^2} \Re ( F \bar \psi_2)\psi_2
    \\
    & \ + i \Im(\psi_1 \bar \psi_1)\psi_0 + i \Im ( \psi_0 \bar
    \psi_2) \psi_1 +i \Im(\psi_1 \bar \psi_0)\psi_2
  \end{split}
  \]
  The last line algebraically vanishes, so we have our equation for
  $F$:
  \[
  i D_0 F = (\frac{A_2^2}{r^2} - \partial_1 (\partial_1+\frac{1}r))F +
  \Re (F \bar \psi_1) \psi_1 + \frac{1}{r^2} \Re ( F \bar
  \psi_2)\psi_2
  \]
  It is more convenient to recast this as an equation for
  \[
  \frac{F}{r} = (\partial_r + \frac1r)\frac{\psi_2}{r} - \frac{i
    A_2}{r} \psi_1
  \]
  which is exactly the quantity in \eqref{compnew}. We obtain
  \begin{equation} \label{Freq} (i \partial_t +\Delta - \frac{1}{r^2})
    \frac{F}{r} = (A_0+\frac{A_2^2-1}{r^2}) F + \Re (\frac{F}{r} \bar
    \psi_1) \psi_1 +\frac{1}{r^2} \Re ( \frac{F}{r} \bar \psi_2)\psi_2
  \end{equation}
  In view of the $L^4$ Strichartz bounds for $\psi_1$ and $\psi_2$ and
  the derived $L^2$ bounds for $A_0$ and $\frac{A_2^2-1}{r^2}$,
  standard arguments show that this linear equation is well-posed in
  $L^2$. Hence the conclusion follows provided that $\frac{F}{r}$ has
  sufficient regularity. Indeed, we have
  \[
  \frac{F}r = -\frac{i}2 \left( \partial_r \psi^+ + \frac{1+A_2}r
    \psi^+ - \partial_r \psi^- - \frac{1-A_2}r \psi^- \right)
  \]
  It is obvious that if $R_\pm \psi^\pm \in H^1$ then $\frac{F}r \in
  L^2$.
  
  If $R_\pm \psi^\pm \in H^2$ then by using the results in Lemma
  \ref{LBE} and Sobolev embeddings one easily shows that $\frac{F}r
  \in \dHe$.
  
  We will show in detail that if $R_\pm \psi^\pm \in H^3$, then $ (
  \Delta - \frac1{r^2}) \frac{F}r \in L^2$. Indeed,
  \[
  \begin{split}
    2i ( \Delta - \frac1{r^2}) \frac{F}r & = \partial_r \Delta \psi^+
    + \frac{1+A_2}r \Delta \psi^+ + \frac{\Delta A_2}{r} \psi^+
    + 2 (\frac{\partial_r A_2}r  - \frac{1+A_2}{r^2}) \partial_r \psi^+ - 2 \frac{\partial_r A_2}{r^2} \psi^+ \\
    &- (\partial_r + \frac2r)( \Delta - \frac{4}{r^2} )\psi^- +
    \frac{\Delta A_2}{r} \psi^- + 2 (\frac{\partial_r A_2}r -
    \frac{1+A_2}{r^2}) \partial_r \psi^- - 2 \frac{\partial_r
      A_2}{r^2} \psi^-
  \end{split}
  \]
  We have that $\| \partial_r \Delta \psi^+ \|_{L^2} \les \| R_+
  \psi^+ \|_{H^3}$. For some terms, direct computations combined with
  Sobolev embeddings give
  \[
  \| \frac{1+A_2}r \Delta \psi^+ \|_{L^2} \les \| \psi^\pm \|_{L^4}^2
  \| \Delta \psi^+ \|_{H^2} \les \| R_\pm \psi^\pm \|_{H^2}^3
  \]
  \[
  \| ( \frac{\partial_r A_2}{r}- \frac{1+A_2}{r^2} ) \partial_r
  \psi^+\|_{L^2} \les \| \psi^\pm \|_{L^\infty}^2 \| \partial_r \psi^+
  \|_{L^2} \les \| R_\pm \psi^\pm \|_{H^2}^3
  \]
  while for others, we rearrange them
  \[
  \frac1r (\Delta A_2 - 2 \frac{\partial_r A_2}r) \psi^+ =
  \frac{1}{4r} (\partial_r - \frac1r) [r(|\psi^-|^2 - |\psi^+|^2)]
  \psi^+ = \frac14 \partial_r (|\psi^-|^2 - |\psi^+|^2) \psi^+
  \]
  and then estimate by
  \[
  \| \frac1r (\Delta A_2 - 2 \frac{\partial_r A_2}r) \psi^+ \|_{L^2}
  \les \| \psi^\pm \|^2_{L^\infty} \| \partial_r \psi^\pm \|_{L^2}
  \les \| R_\pm \psi^\pm \|_{H^2}^3
  \]
  The terms corresponding to $\psi^-$ are treated in a similar manner.
  
  Hence we can conclude that $(\Delta - \frac1{r^2}) \frac{F}r \in
  L^2$.  This allows us to run a standard energy argument by pairing
  the equation, with $\bar F$, to conclude that
  \[
  \partial_t \| \frac{F}r \|^2_{L^2} \les (\| \psi_1 \|^2_{L^\infty} +
  \| \frac{\psi_2}r \|^2_{L^\infty}) \| \frac{F}r \|^2_{L^2}
  \]
  which by using the Gronwall inequality and the fact that $F(0)=0$
  leads to $F(t)=0$ for all $t \in I$.
  
  In order to run the energy argument it suffices to have $\frac{F}r
  \in \dHe$ and use the pairing of $\dot H^{-1}_e$ and $\dHe$. This is
  useful in the proof of Proposition \ref{SMR} where we assume only
  $R_\pm \psi^\pm \in H^2$.
  
  In the general case when $\psi^\pm_0 \in L^2$ only, we regularize
  them as follows. We produce $R_\pm \psi^-_{n,0} \in H^3$ so that $\|
  \psi^-_0 - \psi^-_{n,0} \|_{L^2} \leq \frac1n$.  By using Lemma
  \eqref{a2p2c}, and particularly part v), we obtain that the
  compatible pair $R_+ \psi^+_{n,0} \in H^3$ and $\| \psi^+_0 -
  \psi^+_{n,0} \|_{L^2} \leq \frac1n$. We also recast the
  compatibility condition to
  \[
  \psi^+-\psi^-=- [r\partial_r]^{-1} \left( \psi^+ - \psi^-
    +A_2(\psi^++\psi^-)\right)
  \]
  so that all terms involved belong to $L^2$.  Using the conservation
  of the compatibility condition for $\psi^\pm_n(t)$ under the flow
  \eqref{psieq} and part v) of the Theorem, we obtain the desired
  result.

  ii) The key observation is that the equation for $\psi_2$ in
  \eqref{comp1} becomes linear in the following sense:
  \begin{equation} \label{limcomp} \lim_{t \rightarrow \infty}
    \| \partial_r \psi_2 + i \psi^- - \frac{\psi_2}r \|_{L^2} =0
  \end{equation}
  under the hypothesis that $\lim_{t \rightarrow \infty} \| \psi^-(t)
  - e^{itH^-} \psi^-_+ \|_{L^2}=0$. This is easily shown to follow
  from the following estimate
  \begin{equation} \label{pd} \lim_{t \rightarrow \infty} \sup_{r \in
      (0,\infty)} | r \int_r^\infty \frac{e^{itH^-} f}{s^2} ds| = 0
  \end{equation}
  which holds true for $f \in L^2$ and will be proved in the
  Appendix. Based on this, it follows that $\lim_{t \rightarrow
    \infty} \| \psi_2(t) \|_{L^\infty} =0$, and that
  \[
  \lim_{t \rightarrow \infty} \|i (A_2+1) \psi^- - \frac{1}r (A_2+1)
  \psi_2\|_{L^2} =0
  \]
  which justifies \eqref{limcomp}.

  With the notation
  \[
  f(t) = \partial_r ( e^{itH^+} \psi_+^+ - e^{itH^-}\psi_+^-) - 2
  \frac{e^{itH^-}\psi_+^-}r,
  \]
  the scattering relation \eqref{asscat} can be rewritten as $\lim_{t
    \rightarrow \infty} \| f(t) \|_{\dot H_e^{-1}}=0$.  A direct
  computation gives that $f$ obeys the equation
  \[
  \begin{split}
    (i\partial_t + \Delta) f & = [\Delta, \partial_r]( e^{itH^+}
    \psi_+^+ - e^{itH^-}\psi_+^-) - 2 [\Delta, \frac1r]
    e^{itH^-}\psi_+^-
    - \frac{8}{r^2} \frac{e^{itH^-}\psi_+^-}r - \partial_r \frac{4}{r^2} e^{itH^-}\psi_+^- \\
    & = \frac{1}{r^2} \partial_r e^{it\Delta} \psi_+^+ -2( -2 \frac{\partial_r}{r^2} + \frac1{r^3}) e^{itH^-}\psi_+^-  - \frac{1}{r^2} \partial_r e^{itH^-}\psi_+^- - \frac{4}{r^2} \partial_r e^{itH^-}\psi_+^- \\
    & = \frac{1}{r^2} f(t)
  \end{split}
  \]
  Since $\lim_{t \rightarrow \infty} \| f(t) \|_{\dot H^{-1}_e} =0$ it
  follows from the conservation of the $\dot H^{-1}_e$ norm that
  $f(0)=0$ which is \eqref{asscat}. Alternatively, one could carry out
  this argument as we did in viii).

  Assume now that given $\psi^\pm_+$ satisfying \eqref{asscat} we
  construct (as in ii)) solutions $\psi^\pm(t)$ to \eqref{psieq} on
  some $[T,+\infty)$ which scatter forward to $e^{itH^\pm}
  \psi^\pm_+$. Following the argument in part i) we construct $F$
  which satisfies \eqref{Freq}. Assuming additional regularity on the
  states $\psi^\pm_+$, $R_\pm \psi^\pm_+ \in H^3$, we have by part
  vii) of Theorem \ref{CT} that $R_\pm \psi^\pm(t) \in H^3$, hence by
  the argument in i) $(\Delta - \frac1{r^2})\frac{F}r \in L^2$ and the
  right-hand side of \eqref{Freq} belongs to $L^2$. Then the Duhamel
  formula applies to \eqref{Freq} and in turn the Strichartz estimate
  \[
  \| \frac{F}r \|_{L^4([T,\infty) \times \R)} \les \| \psi^\pm
  \|_{L^4([T,\infty) \times \R)}^2 \| \frac{F}r \|_{L^4([T,\infty)
    \times \R)}
  \]
  where we have used that $\lim_{t \rightarrow \infty} \| \frac{F(t)}r
  \|_{L^2}=0$ (this follows as above because of \eqref{pd}). Next, by
  taking $T$ large enough, we obtain that $F(t) \equiv 0$ for $t \geq
  T$ and the conclusion follows by invoking part i).

  For general states $\psi^\pm_+ \in L^2$ satisfying \eqref{asscat} we
  proceed as above.  We approximate them by sequences $\psi^\pm_{n,+}$
  with $R_\pm \psi^\pm_{n,+} \in H^3$; this can be done by
  regularizing $R_- \psi^-_+$ first and then showing that the
  corresponding $R_+ \psi^+_+$ has the same regularity as we did in
  Lemma \ref{a2p2c} part v) - in fact this argument involves only the
  linear part of the argument there. Then we write \eqref{asscat} at
  the level of $L^2$
  \[
  \psi^+ - \psi^-= 2 [r\partial_r]^{-1} \psi^-,
  \]
  use the above argument and a limiting argument.

  iii) One side of \eqref{str} follows from the fixed time bound
  \eqref{strfix}. The other side is similar, and it consists in
  replicating the result of Lemma \ref{a2p2c} starting from $\psi^+$
  instead.

\end{proof}

\begin{proof}[Proof of Proposition \ref{SMR}]
  With the given $\psi_0^\pm$ we reconstruct $u_0 \in \dot H^1 \cap
  \dot H^3$ as in Proposition \ref{recprop}.  The additional
  regularity $R_\pm \psi^\pm_0 \in H^2$ implies, by \eqref{rtr}, that
  $u_0 \in \dot H^1 \cap \dot H^3$.  For the classical Schr\"odinger
  Map $u(t)$ with data $u_0$ (whose existence follows from Theorem \ref{clasic}) 
  we construct its Coulomb gauge, its field
  components and write the system \eqref{psieq} whose initial data is
  $\psi_0^\pm$. Invoking the uniqueness part of Theorem \ref{CT}, it
  follows that $\psi^\pm(t)$ are the gauge representation of $\mathcal
  W^\pm(t)$, hence the reconstruction in Proposition \ref{recprop}
  gives the Schr\"odinger Map $u(t)$ for each $t$.

\end{proof}

We can now identify the critical threshold for global well-posedness
and scattering. For any $m \geq 0$, we define $A(m)$ by
\[
A(m):= sup \{ S_{I_{max}}(\psi^-): M(\psi^\pm) \leq m \ \mbox{where
  $\psi^\pm$ is a solution to \eqref{psieq} satisfying}
\eqref{compnew} \}
\]
where $\psi^\pm$ is assumed to be a solution of \eqref{psieq},
satisfying the compatibility condition \eqref{compnew} and $I_{max}$
is its maximal interval of existence. Note that the compatibility
condition forces $M(\psi^+)=M(\psi^-)$ so the use of $M(\psi^\pm) \leq
m$ is unambiguous.

Obviously $A$ is a monotone increasing functions, it is bounded for
small $m$ by part iv) and it is left-continuous by part v) of Theorem
\ref{CT}. Therefore there exists a critical mass $0 < m_0 \leq +
\infty$ such that $A(m)$ is finite for all $m < m_0$ and it is
infinite $m \geq m_0$. Also any solution $\psi^\pm$ with $M(\psi^\pm) < m_0$
is globally defined and scatters.

Note that from \eqref{str} it follows that we could have used
$S_{I_{max}}(\psi^+)$ in the definition of $A(m)$ and arrive to the
same conclusion as above with the same critical mass $m_0$.

\section{Concentration compactness} \label{concomp} We start by
exhibiting the symmetries of the system \eqref{psieq}.  First, the system is
invariant under the time reversal transformation $\psi^\pm(r,t)
\rightarrow \bar \psi^\pm(r,-t)$.  This allows us to focus our attention on
positive times, i.e. $t \geq 0$.

Next, the system is invariant under two other transformations:
scaling, $\psi^\lambda = \lambda^{-1} \psi(\lambda^{-1} r,
\lambda^{-2} t)$ with $\lambda \in \R$, and phase multiplication,
$\psi^\alpha(r,t) = e^{i \alpha} \psi(r,t)$ with $\alpha \in \R/{2\pi
  \Z}$.  The phase multiplication can be ignored as the group
generated is compact. This way we generate the first (non-compact)
group $G$ of transformations $g_\lambda$ defined by
\[
g_{\lambda} f(r) = \lambda^{-1} f(\lambda^{-1} r)
\]
From \eqref{rela}, \eqref{A2} and \eqref{A0}, the effect of the action
$g_\lambda$ on $\psi^\pm$ is translated in the action of
$g_{\lambda}^1$ on $\psi_2,A_2$ and $g_\lambda^2$ on $A_0$ where
\[
g^1_{\lambda} f(r) = f(\lambda^{-1} r), \qquad g^2_{\lambda} f(r) =
\lambda^{-2} f(\lambda^{-1} r)
\]
The action of $g_\lambda$ is extended to space-time functions by
\[
T_{g_\lambda} f(r,t) = \lambda^{-1} f(\lambda^{-1} r, \lambda^{-2} t)
\]

The equations in \eqref{psieq} are also time translation invariant and
this suggests enlarging the group $G$ to $G'$ as follows. Given
$\lambda > 0$ and $t_0 \in \R$, we define
\[
g_{\lambda, t_0} f = \lambda^{-1} [e^{it_0H^-} f](\lambda^{-1} r)
\]
We denote by $G'$ the groups generated by these transformations. We also define
the extend the action of $g_{\lambda,t_0}$ to space-time functions by  
\[
T_{g_{\lambda,t_0}} f = T_{g_\lambda} f(\cdot, \cdot + t_0)
\]

Given two sequences $g^{n}, \tilde{g}^{n} \in G'$, we say
that they are asymptotically orthogonal iff
\begin{equation} \label{assort} \frac{\lambda_n}{\tilde \lambda_n} +
  \frac{\tilde \lambda_n}{\lambda_n} + | t_n \lambda_n^2 - \tilde t_n
  \tilde \lambda_n^2|= \infty
\end{equation}
The asymptotic orthogonality will be mostly exploited as
follows. Given two asymptotically orthogonal sequences $g^{n},
\tilde{g}^{n} \in G'$
\begin{equation} \label{assmas} \lim_{n \rightarrow \infty} \lan g^{n}
  f, \tilde g^{n} \tilde f \ran = 0, \quad \forall f, \tilde f \in L^2
\end{equation}
\begin{equation} \label{assstr} \lim_{n \rightarrow \infty} \|
  |T_{g^n} h|^\frac12 |T_{\tilde g^{n}} \tilde h|^\frac12 \|_{L^4} =
  0, \quad \forall h, \tilde h \in L^4(\R \times \R).
\end{equation}
\begin{equation} \label{assrdrm} \lim_{n \rightarrow \infty} \|
  T_{g^{n}} f \cdot [r\partial_r]^{-1} \left( T_{\tilde g^{n}} h
    T_{\tilde g^{n}} \tilde h \right) \|_{L^\frac43} = 0, \quad
  \forall f, h, \tilde h \in L^4(\R \times \R).
\end{equation}
\begin{equation} \label{assint} \lim_{n \rightarrow \infty} \|
  T_{g^{n}} f \cdot r^{-2}[r^{-1} \bar \partial_r]^{-1} \left(
    \int_0^r T_{\tilde g^{n}} h T_{\tilde g^{n}} \tilde h \right)
  \|_{L^\frac43} = 0, \quad \forall f, h, \tilde h \in L^4(\R \times
  \R).
\end{equation}

These estimate are inspired by the similar ones in \cite{BG},
\cite{Ker}, \cite{mv}, \cite{tvz}, \cite{bv} to which we also refer
the reader for further details on the subject.  They can also be
proved directly using the Riemann-Lebesgue characterization of $L^p$
spaces.

We are now ready to state the two main results of this section.

\begin{theo} \label{cc} Assume that the critical mass $m_0 < 8$. Then
  there exists a critical element, i.e. a maximal-lifespan solution
  $\psi^\pm$ to \eqref{psieq} and satisfying \eqref{compnew}, with
  mass $m_0$ which blows up forward in time. In addition this solution
  has the following compactness property: there exists a continuous
  function $\lambda(t): I_+=[0,T_+) \rightarrow \R_+$ such that the
  sets
  \[
  K^\pm:= \left\{ \frac1{\lambda(t)} \psi^\pm(\frac{r}{\lambda(t)},t),
    t \in I_+ \right\}
  \]
  are precompact in $L^2$.
\end{theo}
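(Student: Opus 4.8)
The plan is to run the Kenig--Merle concentration-compactness scheme, adapted to the one-variable formulation of the system. Since by Lemma~\ref{a2p2c} and Proposition~\ref{recprop} a compatible solution is entirely determined by $\psi^-$, I would carry out the whole argument at the level of $\psi^-$ and reconstruct $\psi^+,A_2,A_0,\psi_2$ from it at each step. By the definition of the critical mass $m_0$ together with the monotonicity and left-continuity of $A(m)$ recorded after the proof of Proposition~\ref{SMR}, there is a sequence of compatible solutions $\psi_n^\pm$ of \eqref{psieq} with $M(\psi_n^\pm)\to m_0$ and $S_{I_n,\geq t_0}(\psi_n^-)\to+\infty$; after a time translation and using the time-reversal symmetry I may take $t_0=0$, so each $\psi_n^-$ blows up forward in time. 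The goal is to extract from the data $\psi_{n,0}^-$ a single limiting profile of mass exactly $m_0$ whose solution is the desired critical element.

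First I would apply the linear profile decomposition of Proposition~\ref{pcc} to the bounded $L^2$ sequence $\psi_{n,0}^-$: modulo the non-compact group $G'$ (scaling together with free $H^-$-evolution, the phase being compact and thus absorbed into the profiles) one writes
\begin{equation*}
  \psi_{n,0}^- = \sum_{j=1}^{J} T_{g_n^j}\phi^j + w_n^J,
\end{equation*}
with $g_n^j\in G'$ pairwise asymptotically orthogonal in the sense of \eqref{assort}, with asymptotic mass decoupling $M(\psi_{n,0}^-)=\sum_j M(\phi^j)+M(w_n^J)+o(1)$, and with $\limsup_n S_{\R}(e^{itH^-}w_n^J)\to 0$ as $J\to\infty$. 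The point special to this problem, supplied by Proposition~\ref{pcc}, is that the decomposition is compatible with the elliptic reconstruction: the companion data $\psi_{n,0}^+$ (and $A_2,A_0,\psi_2$) admit a parallel decomposition whose profiles come from running Lemma~\ref{a2p2c} on each $\phi^j$ separately, with errors controlled in $\dHe$ by \eqref{2estdif}. Because the reconstruction $\psi^-\mapsto\psi^+$ is only Lipschitz rather than linear, this is where one must be careful; the orthogonality estimates \eqref{assmas}--\eqref{assint} are precisely what decouples the nonlocal terms $[r\partial_r]^{-1}\Re(\bar\psi^+\psi^-)$ and $r^{-2}\int_0^r(|\psi^+|^2-|\psi^-|^2)$ across distinct profiles.

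Next I would associate to each $\phi^j$ its nonlinear profile, namely the maximal compatible solution $\Psi^{j,\pm}$ of \eqref{psieq} agreeing with $e^{itH^\pm}\phi^{j,\pm}$ in the appropriate limiting sense. If $M(\phi^j)<m_0$ for every $j$ and at least two profiles carried positive mass, then by the definition of $m_0$ each nonlinear profile would have finite global Strichartz norm; feeding the asymptotically orthogonal superposition $\sum_j T_{g_n^j}\Psi^{j,\pm}$ as an approximate solution into the stability result, Theorem~\ref{CT}(vi), and bounding the cross terms with \eqref{assstr}, \eqref{assrdrm}, \eqref{assint}, one would get $\limsup_n S_{I_n,\geq 0}(\psi_n^-)<\infty$, contradicting the blow-up. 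Hence there is exactly one nontrivial profile, $M(\phi^1)=m_0$, and $w_n^J\to 0$ in $L^2$. The hypothesis $m_0<8=(2\sqrt2)^2$ ensures $M(\phi^1)<8$, so that Lemma~\ref{a2p2c} and Proposition~\ref{recprop} apply to the limit and the reconstructed $\psi^+,A_2,A_0$ are well defined, producing a genuine compatible critical element $\psi^\pm$ of mass $m_0$. It must have infinite forward Strichartz norm, for otherwise Theorem~\ref{CT}(vi) would keep $S_{I_n,\geq 0}(\psi_n^-)$ bounded; hence by Theorem~\ref{CT}(ii) it blows up forward in time. The hard part is exactly this step: guaranteeing that each nonlinear profile is itself \emph{compatible}, so that the subcritical bound $A(M(\phi^j))<\infty$ may be invoked, and that the approximate-solution error is small in $L^{4/3}$ despite the nonlocal nonlinearity, which is where \eqref{assrdrm}--\eqref{assint} and the regularity accounting of Theorem~\ref{CT}(vii) enter.

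Finally, for the compactness of the orbit I would argue by extraction. For any sequence $t_n\to T_+$, consider $\psi^-(t_n)$, which has mass $m_0$ and, by minimality of the critical element, blows up in at least one time direction. Applying the profile decomposition to $\psi^-(t_n)$ and repeating the single-profile argument forces one profile of full mass $m_0$, so there exist $\lambda_n>0$ with $\lambda_n^{-1}\psi^-(\cdot/\lambda_n,t_n)$ convergent in $L^2$ along a subsequence. Since this holds for every sequence $t_n$, the set $K^-=\{\lambda(t)^{-1}\psi^-(\cdot/\lambda(t),t):t\in I_+\}$ is precompact for a suitable $\lambda(t)$, and the same reconstruction yields precompactness of $K^+$. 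Continuity of $\lambda(t)$ follows from continuity of $t\mapsto\psi^-(t)$ in $L^2$ (Theorem~\ref{CT}) together with the fact that, the limiting profile being unique up to scaling, the scale is pinned down modulo the compact phase ambiguity.
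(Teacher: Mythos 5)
Your proposal is correct and follows essentially the same route as the paper: reduce the theorem to a convergence-modulo-symmetries (Palais--Smale) statement, apply the linear profile decomposition to $\psi^-$ alone, reconstruct the compatible $\psi^+$ profiles and gauge elements elliptically via Lemma \ref{a2p2c} and Proposition \ref{recprop}, and use the stability result of Theorem \ref{CT}(vi) together with the orthogonality estimates \eqref{assstr}--\eqref{assint} on the nonlocal nonlinearities to force a single profile of full mass $m_0$. One labeling correction: the linear profile decomposition is Proposition \ref{lprof}, not Proposition \ref{pcc} --- the latter is precisely the extraction statement your argument establishes, and from which the paper then deduces Theorem \ref{cc} by the standard Kenig--Merle argument you sketch in your final paragraph.
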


\begin{rema} As a consequence of the compactness property it follows
  that there exists a function $C: \R^+ \rightarrow \R^+$ such that
  the above critical element satisfies
  \begin{equation} \label{psicon} \int_{r \geq C(\eta)
      \lambda(t)^{-1}} |\psi^\pm(t,r)|^2 rdr \leq \eta, \quad \forall
    t \in I_+.
  \end{equation}  

\end{rema}

One can construct critical elements whose function $\lambda(t)$ has
more explicit behavior.
\begin{theo} \label{lambdacontrol} Assume that the critical mass $m_0
  < 8$. Then we can construct a critical element as in Theorem
  \ref{cc} such that one of the two scenarios holds true:

  i) $T_{+}=\infty$ and $\lambda(t) \geq c > 0, \forall t \geq 0$.

  ii) $T_+ < \infty$ and $\lim_{t \rightarrow T_+} \lambda(t)=\infty$.
\end{theo}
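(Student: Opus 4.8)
The plan is to take the critical element produced in Theorem \ref{cc}, together with its scaling function $\lambda(t)$ on $I_+ = [0,T_+)$, and to upgrade the qualitative precompactness of $K^\pm$ into quantitative control on $\lambda$. The argument splits according to whether $T_+$ is finite or infinite, and in both cases the engine is a \emph{local constancy} property of $\lambda$ together with a rescaling argument exploiting the symmetry group $G'$ and the minimality of the critical mass $m_0$. Throughout, mass conservation keeps us at mass $m_0 < 8$, so \eqref{A2neg} and the reconstruction and Cauchy machinery developed above apply uniformly to every rescaled or limiting object we produce.

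First I would establish that $\lambda$ cannot vary too fast: there is $\delta_0 = \delta_0(m_0) > 0$ and $C \ge 1$ with $C^{-1}\lambda(t_0) \le \lambda(t) \le C\lambda(t_0)$ whenever $|t-t_0| \le \delta_0 \lambda(t_0)^{-2}$ and $t,t_0 \in I_+$. This follows by rescaling to $\lambda(t_0)=1$ and invoking parts iv)--vi) of Theorem \ref{CT}: on the natural unit time interval the Strichartz norm of a mass-$m_0$ solution is finite, so by the stability theory the solution stays $L^2$-close to the profile governing its compactness, and the concentration scale cannot jump. A consequence is that $\lambda$ is comparable to a continuous function, strictly positive and bounded on every compact subinterval of $I_+$; in particular its infimum over $I_+$ can only be approached as $t \to T_+$.

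If $T_+ < \infty$, I claim $\lambda(t) \to \infty$, which is scenario ii). Suppose not; then there is $t_n \uparrow T_+$ with $\lambda(t_n) \le L < \infty$. Local constancy furnishes a solution on $[t_n, t_n + \delta_0\lambda(t_n)^{-2}]$ with Strichartz norm controlled solely by $m_0$, and $\delta_0\lambda(t_n)^{-2} \ge \delta_0 L^{-2} > 0$ is bounded below. Since $t_n \to T_+$, this produces finite Strichartz norm on a neighborhood of $T_+$ and lets us continue the solution past $T_+$, contradicting that the critical element blows up forward in time. Hence $\liminf_{t \to T_+}\lambda = +\infty$, i.e. $\lim_{t\to T_+}\lambda = +\infty$.

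If $T_+ = \infty$ and $\inf_{I_+}\lambda > 0$ we are in scenario i). The only remaining, and hardest, case is $T_+ = \infty$ with $\liminf_{t\to\infty}\lambda(t) = 0$; here I would \emph{construct a new} critical element rather than keep the old one. Choose running-minimum times $t_n \uparrow \infty$ with $\lambda(t_n) = \min_{[0,t_n]}\lambda \to 0$, and rescale and recenter by $\Psi_n^\pm(r,s) = \lambda(t_n)^{-1}\psi^\pm(\lambda(t_n)^{-1}r, t_n + \lambda(t_n)^{-2}s)$, whose scale $\Lambda_n(s) = \lambda(t_n + \lambda(t_n)^{-2}s)/\lambda(t_n)$ satisfies $\Lambda_n(s) \ge 1$ for all $s \in [-t_n\lambda(t_n)^2, 0]$ by the running-minimum choice. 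Compactness of $K^\pm$ gives $L^2$-convergence of $\Psi_n^\pm(\cdot,0)$ along a subsequence to a profile $\Phi^\pm$ of mass $m_0$, still compatible via \eqref{compnew}, which launches by the minimality characterization of $m_0$ a critical element $\Psi^\pm$ with scale $\Lambda$; local constancy lets me pass the bound to the limit, giving $\Lambda(s) \ge 1$ for all $s \le 0$. Time-reversing via the symmetry $\psi^\pm(r,t) \mapsto \overline{\psi^\pm}(r,-t)$ noted at the start of Section \ref{concomp} turns the half-line $s \le 0$ into a forward critical element whose scale is $\ge 1$, so applying the analysis of the finite-time case to it lands us in scenario i) (if it is forward-global) or scenario ii) (if it blows up in finite forward time, whence its scale $\to\infty$). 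The main obstacle is exactly this last step: one must verify that the running-minimum selection forces $t_n\lambda(t_n)^2 \to \infty$ so that the one-sided bound $\Lambda \ge 1$ survives on all of $(-\infty,0]$, and that the limiting profile genuinely generates a blow-up solution; this is where the minimality of $m_0$ and the Palais--Smale-type compactness behind Theorem \ref{cc} are needed to exclude an infinite regress of rescalings.
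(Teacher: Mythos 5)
Your skeleton is the same as the paper's: for $T_+<\infty$ a continuation argument forces $\lambda(t)\to\infty$, and in the global case with $\liminf_{t\to\infty}\lambda(t)=0$ one rescales at running-minimum times, extracts a compactness limit, propagates the lower bound on the scale to negative times, and time-reverses. However, there is a genuine gap exactly at the step you flag as ``the main obstacle'': proving $t_n\lambda(t_n)^2\to\infty$. This is not a removable technicality. Without it the approximating solutions are only defined backward on $[-t_n\lambda(t_n)^2,0]$ with possibly bounded lengths, so the limit solution need not be backward-global, the bound $\Lambda(s)\geq 1$ does not survive on all of $(-\infty,0]$, and the time-reversed object is not a forward-global critical element; the construction collapses. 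Moreover, your guess about what fills the gap (minimality of $m_0$, Palais--Smale compactness, excluding an ``infinite regress of rescalings'') points in the wrong direction. The paper's argument is short and uses only continuous dependence, weak convergence, and mass conservation: if along a subsequence $t_n\lambda(t_n)^2\to t_0<\infty$, then by part v) of Theorem \ref{CT} the rescaled solutions evaluated at time $-t_n\lambda(t_n)^2$ converge in $L^2$ to $\psi^\pm(\cdot,-t_0)$; but at that time the rescaled solution is exactly $\lambda(t_n)^{-1}\psi^\pm_C(\cdot/\lambda(t_n),0)$, a rescaling of a \emph{fixed} $L^2$ function with $\lambda(t_n)\to 0$, which converges weakly to $0$. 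Since the strong and weak limits must agree, $\psi^\pm(-t_0)\equiv 0$, contradicting $M(\psi^\pm(-t_0))=m_0>0$. With this claim in hand, lower semicontinuity of the lifespan (part v) again) gives backward globality, and the scale bound passes to the limit for each fixed $s\leq 0$.

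Two further corrections. First, your proof of ``local constancy'' rests on the assertion that a mass-$m_0$ solution has finite Strichartz norm on the unit time interval, with bounds ``controlled solely by $m_0$''; this is false --- mass-$m_0$ solutions can blow up in arbitrarily short time, which is precisely what the critical element does. The uniform local bound must come from the precompactness of $K^\pm$ (a finite cover of $K^\pm$ plus the stability statement vi) of Theorem \ref{CT}), not from the mass; this is in effect what the paper's proof of scenario ii) does by extracting an $L^2$ limit of the rescaled data at times $t_n\to T_+$ and invoking continuous dependence to continue $\psi^\pm_C$ past $T_+$. Second, your closing dichotomy is misplaced: once backward globality is established, the time-reversed element is automatically defined on all of $[0,\infty)$ with scale bounded below, so your ``finite forward blow-up'' branch cannot occur. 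What does need checking --- and what your hedge about whether the profile ``genuinely generates a blow-up solution'' leaves open --- is that the new element blows up forward in time at all, i.e.\ that the limit has infinite Strichartz norm before reversal. The paper secures this through stability: if the limit solution had finite Strichartz norm, then so would $\psi^\pm_C$ itself, contradicting that it is a critical element.
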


The rest of this section is spent on sketching the proof of the two
theorems above.  The statements of Theorema \ref{cc} and
\ref{lambdacontrol} are similar to the corresponding ones in the work
of Kenig and Merle, see \cite{KeMe1}.

It is standard, see for instance \cite{KeMe1} and \cite{tvz} that the
result in Theorem \ref{cc} follows from the following
\begin{prop} \label{pcc} Assume $m_0 < 8$. Let $\psi_n^\pm:
  I_{n+}=[0,T_{n+}) \times \R \rightarrow \C, n \in \N$ be a sequence
  of solutions to \eqref{psieq}, satisfying \eqref{compnew} and such
  that $\lim_{n \rightarrow \infty} M(\psi^\pm_n)=m_0$ and $\lim_{n
    \rightarrow \infty} S_{I_{n+}}(\psi_n^\pm) = \infty$.  Then there
  are group elements $g_n \in G$ such that the sequence $g_n
  \psi_n^\pm(t_n)$ has a subsequence which converges in $L^2$.
\end{prop}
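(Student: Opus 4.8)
The plan is to establish this Palais--Smale--type statement by the standard concentration--compactness scheme of Kenig--Merle, adapted to the coupled system \eqref{psieq} and to the nonlocal nonlinearities $N^\pm$. First I would normalize the times. Since $S_{I_{n+}}(\psi^\pm_n)\to\infty$, for each $n$ one can select $t_n\in I_{n+}$ that splits the Strichartz norm symmetrically, so that both $S_{[0,t_n]}(\psi^\pm_n)\to\infty$ and $S_{[t_n,T_{n+})}(\psi^\pm_n)\to\infty$. After this choice it suffices to analyze the sequence of states $\psi^\pm_n(t_n)$, which is bounded in $L^2$ with $M(\psi^\pm_n(t_n))=M(\psi^\pm_n)\to m_0$ by mass conservation.

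Second, apply the linear profile decomposition. Because the $-$ component corresponds to a genuine $2$-equivariant two-dimensional function, the only relevant symmetries are scaling and time translation (there are no spatial translations), so the decomposition for $e^{itH^-}$ takes the form
\[
\psi^-_n(t_n)=\sum_{j=1}^{J} g^{j}_{n}\,\phi^{j,-}+w^{J,-}_n,\qquad g^j_n\in G',
\]
with pairwise asymptotically orthogonal cores in the sense of \eqref{assort}, mass decoupling $\sum_j M(\phi^{j,-})\le m_0$, and $\lim_J\limsup_n \|e^{itH^-}w^{J,-}_n\|_{L^4}=0$. The $+$ components are then \emph{reconstructed} rather than decomposed independently: using the elliptic map $\psi^-\mapsto\psi^+$ of Lemma \ref{a2p2c} (its $L^2$ continuity together with the localization estimate \eqref{loc2est}) one obtains compatible profiles $\phi^{j,+}$ at the \emph{same} cores and a compatible remainder $w^{J,+}_n$, and the Strichartz equivalence \eqref{str} guarantees that smallness of the $-$ remainder transfers to the $+$ remainder. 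This is the ``elliptic part'' of the decomposition anticipated after Lemma \ref{a2p2c}.

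Third, pass to nonlinear profiles. To each $\phi^{j,\pm}$ I associate the maximal-lifespan solution $U^{j,\pm}$ of \eqref{psieq}, using the forward/backward scattering construction of Theorem \ref{CT}(ii)--(iii) when the time core $t^j_n\to\pm\infty$ and the data at the limiting time otherwise; each $U^{j,\pm}$ is compatible by Theorem \ref{CT-CC}(ii). Now run the dichotomy: if there were more than one profile, or a single profile carrying mass strictly below $m_0$, then every profile has mass $<m_0$, hence by the definition of the critical mass each $U^{j,\pm}$ is global with finite Strichartz norm. One then forms the approximate solution $\Psi^{J,\pm}_n=\sum_{j\le J}(U^{j,\pm})_n+e^{(t-t_n)H^\pm}w^{J,\pm}_n$, the nonlinear profiles rescaled back by their cores, and checks, using the orthogonality estimates \eqref{assmas}, \eqref{assstr}, \eqref{assrdrm} and \eqref{assint} to kill all cross interactions (in particular those passing through the nonlocal pieces $[r\partial_r]^{-1}$ and $r^{-2}[r^{-1}\bar\partial_r]^{-1}\int_0^r$ of $A_0,A_2$), that $\Psi^{J,\pm}_n$ solves \eqref{psieq} up to an $L^{4/3}$ error that tends to $0$ as $n\to\infty$ and then $J\to\infty$. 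The stability statement Theorem \ref{CT}(vi) then forces $S_{I_{n+}}(\psi^\pm_n)$ to stay bounded for large $n$, contradicting the hypothesis. Hence there is exactly one profile, with mass $m_0$, and $M(w^{J,\pm}_n)\to 0$; moreover the symmetric Strichartz split rules out $t^1_n\to\pm\infty$ (a finite forward, respectively backward, Strichartz norm of the scattering profile would contradict the corresponding half blowing up), so the surviving time core is bounded and, along a subsequence, is absorbed into $t_n$. Undoing the remaining pure scaling by $g_n=(g^1_n)^{-1}\in G$ yields $g_n\psi^\pm_n(t_n)\to\phi^{1,\pm}$ in $L^2$, which is the claim.

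The main obstacle is the superposition step for the nonlinear profiles. Unlike the model mass-critical nonlinear Schr\"odinger equation, the nonlinearities $N^\pm$ are simultaneously \emph{nonlocal} and coupled across $\pm$, so the error produced by replacing $N^\pm(\sum_j U^j)$ by $\sum_j N^\pm(U^j)$ contains cross terms filtered through $A_0$ and $A_2$; controlling these is precisely the purpose of the orthogonality relations \eqref{assrdrm}--\eqref{assint}, and verifying that they suffice, together with the fixed-time bounds \eqref{Nest} and \eqref{Ndif}, is the technical heart of the argument. A secondary difficulty is bookkeeping the compatibility condition \eqref{compnew} through the whole procedure: since one decomposes only $\psi^-$ and reconstructs $\psi^+$ elliptically, one must ensure the reconstructed objects are genuinely the $+$ components of compatible solutions, for which the difference estimate \eqref{2estdif} and Theorem \ref{CT-CC} are the relevant tools.
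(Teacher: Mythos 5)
Your proposal follows essentially the same route as the paper's proof: a linear profile decomposition applied to $\psi^-_n$ only, elliptic reconstruction of the compatible $+$ profiles and remainder via Lemma \ref{a2p2c}, nonlinear profiles ruled out through the stability statement in Theorem \ref{CT} using the orthogonality estimates \eqref{assstr}, \eqref{assrdrm}, \eqref{assint} to kill the nonlocal cross terms, and the difference estimate \eqref{2estdif} to propagate the compatibility condition \eqref{compnew}. The only cosmetic deviations are your symmetric Strichartz-splitting choice of $t_n$ (the paper decomposes directly at $t=0$ and disposes of divergent time cores by comparison with the free flow) and your use of the linear rather than the nonlinear evolution of the remainder; neither changes the substance of the argument.
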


One of the main ingredients in the proof of Proposition \ref{pcc} is
the classical linear profile decomposition result.  These type of
results originate in the work of Bahouri and Gerard \cite{BG}, for the case of 
nonlinear wave equation and independently, and in the work of Merle and
Vega \cite{mv}, for the case of the nonlinear Schr\"odinger equation. For the case of
nonlinear Schr\"odinger equations see also \cite{bv}, \cite{Ker}, \cite{tvz}.

\begin{prop} \label{lprof} Let $\psi_0^n, n\in \N$ a bounded sequence
  in $L^2$. Then (after passing to a subsequence if necessary) there
  exists a sequence $\phi^j, j \in \N$ of functions in $L^2$ and
  $g^{n,j} \in G', n,j \in \N$ such that we have the decomposition
  \begin{equation}
    \psi_0^n = \sum_{j=1}^l g^{n,j} \phi^j + w^{n,l}, \qquad \forall l \in \N
  \end{equation}
  where $w^{n,l}$ satisfies
  \begin{equation} \label{strvanish} \lim_{l \rightarrow \infty}
    \lim_{n \rightarrow \infty} S(e^{itH^-} w^{n,l}) = 0
  \end{equation}
  Moreover $g^{n,j}$ and $g^{n,j'}$ are asymptotically orthogonal for
  any $j \ne j'$ and we have the following orthogonality condition
  \begin{equation} \label{weak}
    \begin{split}
      weak \lim_{n \rightarrow \infty} (g^{n,j})^{-1} w^{n,l} =0,
      \quad \forall 1 \leq j \leq l
    \end{split}
  \end{equation}
  As a consequence the mass decoupling property holds
  \begin{equation} \label{massdec} \lim_{n \rightarrow \infty} (M(u^n)
    - \sum_{j=1}^l M(\phi^j)-M(w^{n,l}))=0
  \end{equation}
\end{prop}
The same statement holds true also for the operator $H^+$.  The
general version of this result was established originally in \cite{mv}
and it has the same statement with the following adjustments: the
sequence of functions $\psi^n_0 \in L^2(\R^2)$ and the group $G$ is
enlarged to capture the additional symmetries available in the general
case, i.e. spatial and frequency translation, and the Galilean
transformation. Our statement is closer in spirit to the one in
\cite{tvz}, see Theorem 7.3, where the statement is restricted to the
radial case.  Our statement is the corresponding one for the case of
"equivariant" functions, i.e.  functions with the property that
$u(r,\theta)=e^{2i\theta} v (r)$ (in the case of $H^+$ the result is
needed for radial function and then the Theorem 7.3 in \cite{tvz}
applies verbatim).  Indeed, as we already noticed before, the
transformation $R_- e^{itH^-} \psi^- = e^{it\Delta} (R_- \psi^-)$
takes one-dimensional homogeneous solutions of \eqref{beq} into
two-dimensional homogeneous "equivariant "solutions of the free
Schr\"odinger equation. In \cite{tvz}, see the proof of Theorem 7.3, a
robust argument derives the radial statement from the general case.
The argument there can be easily replicated for the "equivariant"
solutions.

\begin{proof}[Proof of Proposition \ref{pcc}] A key ingredient in this
  proof is to produce a geometric profile decomposition in the spirit
  of Proposition \ref{lprof}. The challenge is that the profiles
  cannot be arbitrary, but geometrically related via the compatibility
  condition \eqref{compnew}.

  One choice is to produce the profile decomposition for
  $\psi_n^\pm(0)$ and then perform additional analysis on the profiles
  so that they satisfy the compatibility relations \eqref{compnew}.
  This route had been implemented in the case of Wave Maps by Krieger
  and Schlag in \cite{KS}.

  We choose a different, more explicit route. We use the profile
  decomposition for $\psi^-_n(0)$ and then use \eqref{comp1} to
  produce a geometric profile decomposition for $\psi_n^+(0)$.  We
  apply Proposition \ref{lprof} to $\psi_n^-(0)$ to obtain the linear
  profile decomposition
  \[
  \psi_{n}^-(0)= \sum_{j=1}^l g^{n,j} \phi_j^- + w_{n,l}^-
  \]
  as described in the Proposition \ref{lprof}. This can be further
  factorized to $g^{n,j}=h^{n,j} e^{i t_{n,j} H^-}$ with $t_{n,j}
  \in \R$ and $h^{n,j} \in G$. Next, a standard diagonalisation
  argument, allows us to reduce the problem to the case when for each
  $j$, the sequence $\{t_{n,j}\}_{n \in \N}$ converges to some limit
  in $[-\infty,+\infty]$.  If for some $j$, $\lim_{n \rightarrow
    \infty} t_{n,j}=t_j$ is finite then one can shift $\phi_j$ by the
  propagator $e^{it_j H^-}$ and reduce the problem to the case
  $t_j=0$. Moreover, by absorbing the $e^{it_{n,j}H^-} \phi_j^- -
  \phi_j^-$ into the error $w_{n,l}^-$ one can further assume that
  $t_{n,j}=0, \forall n \in \N$.

  Next, it follows from \eqref{massdec} that
  \begin{equation} \label{summass} \sum_{j \geq 1} M(\phi_j^-) \leq
    \lim_{n \rightarrow \infty} M(\psi_n^-(0)) = m_0
  \end{equation}
  Therefore $\sup_{j} M(\phi_j^-) \leq m_0$. Assume first that, for
  some $\epsilon > 0$,
  \begin{equation} \label{massbound} \sup_{j} \| M(\phi_j^-) \| \leq
    m_0 - \epsilon
  \end{equation}
  We will show that this leads to a contradiction. Since $A(m)$ is
  monotone increasing and finite on $[0,m_0-\epsilon]$, by iv) in
  Proposition \ref{CT} it follows that
  \begin{equation} \label{Abound} A(m) \leq Bm, \quad 0 \leq m \leq
    m_0 - \epsilon
  \end{equation}
  where $B$ depends on $m_0$ and $\epsilon$ only.

  Then we introduce the nonlinear profiles $v_j^-: \R \times \R
  \rightarrow \C$ associated to $\phi_j^-$ and $t_j = \lim_{n
    \rightarrow \infty} t_{n,j}$ as follows

  - if $t_j=0$ which implies that $t_{n,j}=0$, then we define
  $\phi_j^+$ the compatible function in the sense of \eqref{compnew};
  the construction is done in part a) in Proposition \ref{recprop} and
  Lemma \ref{a2p2c}.  Then we define $v_j^\pm$ to be the
  maximal-lifespan solution of \eqref{psieq} with initial data
  $v_j^\pm(0)=\phi_j^\pm$.

  - if $t_{j}=+\infty$, we construct $\phi^+$ to be the compatible
  function in the sense of \eqref{asscat}.  Then we define $v_j^\pm$
  to be the the maximal-lifespan solution of \eqref{psieq} which
  scatters forward in time to $e^{itH^\pm} \phi_j^\pm$; the existence
  of this solution is guaranteed by part ii) of Theorem \ref{CT}.

  - if $t_{j}=-\infty$, the construction is similar to the one above.

  Using part a) in Proposition \ref{recprop} and Lemma \ref{a2p2c}, we
  construct $w_{n,l}^+$, the compatible pair to $w_{n,l}^-$ in the
  sense of \eqref{compnew}. Then we consider $w_{n,l}^\pm(t)$ the
  solution to \eqref{psieq} with $w_{n,l}^\pm$ initial data. By taking
  $l$ and $n$ large enough, it follows from \eqref{strvanish} and part
  vi) of Theorem \ref{CT} (the approximating solution is simply the
  free flow) that $w_{n,l}^\pm(t)$ is global in time.

  From \eqref{Abound} and Proposition \ref{CT} it follows that, for
  each $j$, $v_j^\pm$ are globally defined in time and satisfy
  \begin{equation} \label{vjinfo} M(v_j^\pm) = M(\phi_j^\pm) \leq m_0
    - \epsilon, \quad S(v_j^\pm) \leq A(M(\phi_j^\pm)) \leq B
    M(\phi_j^\pm)
  \end{equation} 
  Based on these nonlinear profiles we define the approximate global
  solutions
  \[
  \begin{split}
    \psi_{n,l}^\pm(t) = \sum_{j=1}^l T_{h^{n,j}} v_j^\pm(t+t_{n,j}) +
    w^\pm_{n,l}(t).
  \end{split}
  \]
  Our first claim is the following
  \begin{equation} \label{sumstrvj} \lim_{l \rightarrow \infty
    }\lim_{n \rightarrow \infty} S(\psi_{n,l}^\pm) = \lim_{l
      \rightarrow \infty } \sum_{j=1}^l S(v_j^\pm) \leq Bm_0
  \end{equation} 
  Indeed, by using \eqref{assstr} and the asymptotic orthogonality of
  $g^{n,j}$ we obtain that
  \[
  \lim_{n \rightarrow \infty} \| \sum_{j=1}^l T_{h^{n,j}}
  v_j^\pm(\cdot +t_{n,j}) \|_{L^4}^4 = \sum_{j=1}^l \| v_j^\pm
  \|_{L^4}^4
  \]
  Then using \eqref{strvanish}, \eqref{summass}, \eqref{massbound} and
  \eqref{Abound} we obtain \eqref{sumstrvj}.

  Our next claim is that
  \begin{equation} \label{appmas+} \lim_{n \rightarrow \infty}
    M(\psi_{n,l}^\pm(0) - \psi_n^\pm(0))=0, \qquad \forall l \in \N.
  \end{equation}
  We have from the construction that
  \[
  \psi_{n,l}^-(0) = \psi_n^-(0)
  \]
  Therefore we need to show only the $+$ part in \eqref{appmas+}. We
  prove this by using the variable $\psi_2$, hence we involve the
  compatibility conditions \eqref{comp1}. For this we define for each
  $j \in \{1,..,l\}$
  \[
  \begin{split}
    \frac{\psi_{2,n,j}(t)}{r}:= & \frac{T_{h^{n,j}} v_j^+(t+t_{n,j}) - T_{h^{n,j}} v_j^-(t+t_{n,j})}{2i} \\
    A_{2,n,j}(t,r):= & -1 + \int_0^r \frac{|T_{h^{n,j}}
      v_j^+(t+t_{n,j})|^2 - |T_{h^{n,j}} v_j^-(t+t_{n,j})|^2}4 sds
  \end{split}
  \]
  and
  \[
  \begin{split}
    \frac{\psi_{2,er}(t)}{r}: &= \frac{w^+_{n,l}(t)- w^-_{n,l}(t)}{2i} \\
    A_{2,er}(t,r): &= - 1 + \int_0^r \frac{|w^+_{n,l}(t)|^2 -
      |w^-_{n,l}(t)|^2}4 sds
  \end{split}
  \]
  At time $t=0$ it is more convenient to define
  \[
  \begin{split}
    \psi_{n,l,2}= \sum_{j=1}^l \psi_{2,n,j} + \psi_{2,er}, \quad
    1+\tilde A_{n,l,2} = \sum_{j=1}^l (1+A_{2,n,j}) + (1+A_{2,er})
  \end{split}
  \]
  Indeed, from \eqref{A2}, the definition of $A_{n,l,2}$ should have
  been different, but the above one works better for us for technical
  purposes. We make the following claims (at $t=0$):
  \begin{equation} \label{claim1} \lim_{n \rightarrow \infty} \|
    \psi_{2,n,j_1} \psi_{2,n,j_2} \|_{L^\infty} + \| \psi_{2,n,j_1}
    \psi_{2,er} \|_{L^\infty} =0 , \quad \forall j_1 \ne j_2
  \end{equation}
  \begin{equation} \label{claim2} \lim_{n \rightarrow \infty} \|
    \frac{\psi_{2,n,j_1} \psi_{2,n,j_2}}r \|_{L^2} + \|
    \frac{\psi_{2,n,j_1} \psi_{2,er}}r \|_{L^2} =0, \quad \forall j_1
    \ne j_2
  \end{equation}
  \begin{equation} \label{claim3} \lim_{n \rightarrow \infty} \|
    \frac{\psi_{2,n,j_1} \psi_{2,n,j_2}}{r^2} \|_{L^1} + \|
    \frac{\psi_{2,n,j_1} \psi_{2,er}}{r^2} \|_{L^1} =0, \quad \forall
    j_1 \ne j_2
  \end{equation}
  and the similar ones involving $1+A_{2,n,j}, 1+A_{2,er}$ instead of
  $\psi_{2,n,j}, \psi_{2,er}$.

  We will establish \eqref{claim1}, the other two being similar. We
  will need the following estimate, whose proof is provided in the
  Appendix. If $g^n=g_{\l_n,t_n}, n \in \N$ is asymptotically
  orthogonal to the constant sequence $g_{1,0}$ then for every $f \in
  L^2$ and any compact set $K \subset (0,+\infty)$ the following holds
  true for
  \begin{equation} \label{ps} \lim_{n \rightarrow \infty} \| g^n f
    \|_{L^2(K)} = 0.
  \end{equation}
  Let $j_1 \ne j_2$. \eqref{claim1} is scale invariant, hence by
  rescaling in space and by shifting time we can reduce the problem to
  the case $\l_{n,j_1}=1$ and $t_{n,j_1}=0$ for all $n \in \N$.  We
  obviously have from \eqref{2est} that
  \[
  \| \psi_{2,n,j_1} \|_{L^\infty} + \| \psi_{2,n,j_2} \|_{L^\infty}
  \les \| \phi^{j_1} \|_{L^2} + \| \phi^{j_2} \|_{L^2} \les 1
  \]
  We need to localize this estimate.  Given $\epsilon > 0$ choose $R$
  such that
  \[
  \| \phi^{j_1} \|_{L^2(\R_+ \setminus [R^{-1},R])} \leq \epsilon
  \]
  Using \eqref{loc2est} we obtain that
  \[
  \| \psi_{2,n,j_1} \|_{L^\infty(\R_+ \setminus [\epsilon R^{-1},R])}
  \les \epsilon.
  \]
  From \eqref{ps} it follows that there exists $N(\epsilon)$ such that
  if $n \geq N(\epsilon)$ the following holds true
  \[
  \| g^{n,j_2} \phi^{j_2} \|_{L^2[\epsilon R^{-1},\epsilon^{-1}R]}
  \leq \epsilon
  \]
  and hence by invoking \eqref{loc2est} it follows that
  \[
  \| \psi_{2,n,j_2} \|_{L^\infty([\epsilon R^{-1},R])} \les \epsilon
  \]
  From the above estimates we obtain
  \[
  \begin{split}
    & \| \psi_{2,n,j_1} \psi_{2,n,j_2} \|_{L^\infty}  +  \| \psi_{2,n,j_1} \psi_{2,er} \|_{L^\infty} \\
    \les & \| \psi_{2,n,j_1}\|_{L^\infty(\R_+ \setminus [\epsilon
      R^{-1},R])} \| \psi_{2,n,j_2} \|_{L^\infty}
    +  \| \psi_{2,n,j_1}\|_{_{L^\infty}} \psi_{2,er} \|_{L^\infty([\epsilon R^{-1},R])} \\
    \les & \epsilon
  \end{split}
  \]
  which is true provided that $n \geq N(\epsilon)$ (which was defined
  earlier). This implies the first half of \eqref{claim1}.

  The second part of \eqref{claim1}, which contains the terms
  involving $\psi_{2,er}$, follows by involving similar arguments
  together with \eqref{weak}.

  Now we return to the proof of \eqref{appmas+}. $\psi_{2,n,l}(0)$
  obeys the following differential equation:
  \[
  \begin{split}
    \partial_r \psi_{n,l,2} & = i \tilde A_{n,l,2} \psi_{n,l}^- - \frac{1}r  \tilde A_{n,l,2} \psi_{n,l,2} + E_{n,l}^1 \\
    \partial \tilde A_{n,l,2} & = \Im{(\psi^-_{n,l}
      \overline{\psi_{n,l,2}} )} + \frac1r (1-\tilde A_{n,l,2}^2) +
    E_{n,l}^2
  \end{split}
  \]
  where, based on \eqref{claim1}-\eqref{claim3},
  \[
  \lim_{n \rightarrow \infty} \| |E_{n,l}^1| +|E_{n,l}^2| \|_{L^2(rdr)
    \cap L^1(dr)} =0
  \]
  Based on these, we can now use \eqref{2estdif} for $\psi_{n,2}$ and
  $\psi_{n,l,2}$ (recall that $\psi_{n,l}^-(0)=\psi_{n}^-(0)$), and
  obtain that
  \[
  \lim_{n \rightarrow \infty} \| \frac{\psi_{n,2}(0)-\psi_{n,l,2}(0)}r
  \|_{L^2} =0
  \]
  In turn this implies \eqref{appmas+}.

  Next we turn our attention to the dynamical properties of our
  approximate solutions.  In this case, using the convention made on
  the definition of $N^\pm (\psi^\pm_{n,l})$, we remark that
  \[
  A_{n,l,2} = -1 + \int_0^r \frac{|\psi_{n,l}^+|^2 -
    |\psi_{n,l}^-|^2}4 sds
  \]
  with is different than the $\tilde A_{n,l,2}$ (at time $t=0$) we
  have used above.

  We claim that $\psi_{n,l}^\pm$ approximately solve \eqref{psieq} in
  the following sense
  \begin{equation} \label{appstr} \lim_{l \rightarrow \infty} \lim
    \sup_{n \rightarrow \infty} \| (i\partial_t + H^\pm)\psi_{n,l}^\pm
    - N^{\pm}(\psi_{n,l}^\pm) \|_{L^\frac43}=0
  \end{equation} 
  Using the equations that the components of $\psi_{n,l}^\pm$ obey,
  this can be broken into two statements:
  \[
  \lim_{l \rightarrow \infty} \lim \sup_{n \rightarrow \infty} \|
  \sum_{j=1}^l N^\pm( T_{h^{n,j}} v_j^\pm(t+t_{n,j})) -
  N^{\pm}(\sum_{j=1}^l T_{h^{n,j}} v_j^\pm(t+t_{n,j}))
  \|_{L^\frac43}=0
  \]
  and
  \[
  \lim_{l \rightarrow \infty} \lim \sup_{n \rightarrow \infty} \|
  N^\pm( \psi_{n,l}^\pm - w^\pm_{n,l}) - N^{\pm}(\psi^\pm_{n,l})
  \|_{L^\frac43}=0
  \]
  The second statement follows from \eqref{Ndif}, \eqref{strvanish}
  and \eqref{sumstrvj}.

  The first statement follows from \eqref{assort}, \eqref{assstr},
  \eqref{assrdrm} and \eqref{assint} by identifying all terms
  involved:

  - cubic cross terms of type $T_{h^{n,j_1}} v_{j_1}^\pm(t+t_{n,j_1})
  T_{h^{n,j_2}} v_{j_2}^\pm(t+t_{n,j_2}) T_{h^{n,j_3}}
  v_{j_3}^\pm(t+t_{n,j_3})$ with at least two of the $j_1,j_2,j_3$
  distinct coming either from the local part of $A_0$ ($- \frac12
  \Re(\overline{\psi}^+ \psi^-) \psi^\pm$) or from or from the
  $\psi_2$ part ($\frac1{r}\Im{(\psi_2 \bar{\psi}^\pm)} \psi^\pm$);
  these are treated with \eqref{assstr}.

  - nonlocal cubic terms coming from the nonlocal part of $A_0$ of
  type

  $T_{h^{n,j_1}} v_{j_1}^\pm(t+t_{n,j_1}) [r\partial_r]^{-1} \left(
    T_{h^{n,j_2}} v_{j_2}^\pm(t+t_{n,j_2}) T_{h^{n,j_3}}
    v_{j_3}^\pm(t+t_{n,j_3}) \right)$; they are treated using
  \eqref{assstr} and \eqref{assrdrm}.

  - nonlocal cubic terms coming from $1+A_2$; they are of type
  \[
  T_{h^{n,j_1}} v_{j_1}^\pm(t+t_{n,j_1}) r^{-2} [r^{-1}
  \bar \partial_r]^{-1} \left( T_{h^{n,j_2}} v_{j_2}^\pm(t+t_{n,j_2})
    T_{h^{n,j_3}} v_{j_3}^\pm(t+t_{n,j_3}) \right)
  \]
  and are treated using \eqref{assstr} and \eqref{assint}.

  Because of \eqref{appmas+} and \eqref{appstr} we can invoke the
  stability result in part vi) of Proposition \ref{CT}, and obtain a
  contradiction with the fact that $\lim_{n \rightarrow \infty}
  S(\psi_n^\pm)=\infty$. Therefore the assumption \eqref{massbound} is
  false. In light of \eqref{summass}, it follows that the only
  possibilities are

  i) $\phi^j=0, \forall j$; this is impossible due to global in time
  well-posedness of the solution $w_{n}^\pm$ (earlier denoted by
  $w_{n,l}^\pm$).

  ii) after a relabeling, $\phi^1=\phi, M(\phi)=m_0, \phi^j=0, \forall
  j \geq 2$.  In this case the linear profile decomposition simplifies
  to
  \[
  \psi_n^-(0)= h^n e^{it_n H^-} \phi + w_n
  \]
  with $\lim_{n \rightarrow \infty} M(w_n)=0$ (which obviously implies
  \eqref{strvanish}).  If $\lim_{n \rightarrow \infty} t_n =0$ then we
  are done. We are left with the case when $\lim_{n \rightarrow
    \infty} t_n =\infty$, as the case with $-\infty$ is entirely
  similar. An easy argument shows that
  \[
  \lim_{n \rightarrow \infty} S_{\geq 0} (e^{itH^-} h^n e^{it_n H^-}
  \phi) = 0
  \]
  and since $\lim_{n \rightarrow \infty} S (e^{itH^-} w_n)=0$, we can
  invoke the stability argument with $0$ as the approximate solution
  and $\psi_n^\pm(0)$ the initial data to derive that for $n$ large
  enough $S(e^{itH^\pm} \psi_n^\pm(0))$ is finite which contradicts
  the hypothesis.
\end{proof}

We end the section with the proof of Theorem \ref{lambdacontrol}. Here
we adapt the corresponding argument in \cite{KeMe1}.

\begin{proof}[Proof of Theorem \ref{lambdacontrol}]
  i) Consider a global forward in time critical element $\psi^\pm_C$
  which satisfies Theorem \ref{cc}, i.e.
  \[
  \psi^\pm_{comp}(t)= \frac{1}{\lambda(t)}
  \psi^\pm_C(\frac{r}{\lambda(t)},t) \in K
  \]
  where $K$ is compact.

  Assume that there is a sequence of times $t_n \rightarrow \infty$
  such that $\lim_{n \rightarrow \infty} \lambda(t_n)=0$. Given the
  statement on Theorem \ref{cc}, we can select a subsequence (which we
  label the same way) such that $\lim_{n \rightarrow \infty} t_n
  =+\infty$ and
  \[
  \lambda(t_n) \leq 2 \inf_{t \in [0,t_n]} \lambda(t)
  \]
  Using again Theorem \ref{cc}, we obtain (after passing again to a
  subsequence labeled the same way) that
  \[
  \psi^\pm_{0,n}:= \frac{1}{\lambda(t_n)}
  \psi^\pm_C(\frac{r}{\lambda(t_n)},t_n) \rightarrow \psi^\pm_0
  \]
  in $L^2$. Using the mass conservation it follows that
  $M(\psi_0^\pm)=m_0 > 0$.  The fact that $\psi^\pm_0$ satisfy the
  compatibility relations can be done in an indirect way. We redefine
  $\psi^+_0$ to be the compatible pair of $\psi^-_0$, and by using the
  last statement in Lemma \ref{a2p2c}, see \eqref{2estdif}, it follows
  that $\psi^+_{0,n} \rightarrow \psi^+_0$ in $L^2$.

  Let $\psi^\pm_{n} ,\psi^\pm$ be the solution of \eqref{psieq} with
  initial data $\psi^\pm_{0,n},\psi^\pm_0$ at $t=0$. From part v) of
  Proposition \ref{CT} it follows that $T_{\pm}(\psi^\pm_{0}) \geq
  \underline{\lim}_{n \rightarrow \infty} T_{\pm}(\psi^\pm_{0,n}) $.
  Since the former are $\infty$, it follows that
  $T_{\pm}(\psi^\pm_{0})=\infty$, hence $\psi^\pm$ is global. By the
  Cauchy theory $\| \psi^\pm \|_{S(\R \times \R)}=+\infty$, since
  otherwise the stability part of the Cauchy theory would imply that
  $\| \psi^\pm_C \|_{S(\R \times \R)} = \| \psi_n \|_{S(\R \times \R)}
  < + \infty$, a contradiction.

  From the Cauchy theory we also have that for every $t \in \R$,
  \begin{equation} \label{limit} \lim_{n \rightarrow \infty}
    \psi_n^\pm(t)=\psi^\pm(t)
  \end{equation}

  By uniqueness of the Cauchy problem \eqref{psieq} it follows that
  \[
  \psi^\pm_n (r,t)= \frac{1}{\lambda(t_n)}
  \psi^\pm_C(\frac{r}{\lambda(t_n)},t_n + \frac{t}{\lambda^2(t_n)})
  \]

  We claim that $\lim_{n \rightarrow \infty} t_n \lambda^2(t_n) =
  \infty$. If this were not true, then, after passing to a
  subsequence, we would have $\lim_{n \rightarrow \infty} t_n
  \lambda^2(t_n) = t_0$.  Then, by \eqref{limit},
  \[
  \psi^\pm_n(\cdot,-t_n \lambda^2(t_n)) = \frac{1}{\lambda(t_n)}
  \psi^\pm_C(\frac{\cdot}{\lambda(t_n)}, 0 ) \rightarrow
  \psi^\pm(\cdot, -t_0)
  \]
  in $L^2$. Since $\lim_{n \rightarrow \infty} \lambda(t_n)=0$, this
  implies $\psi^\pm(t_0) \equiv 0$ which is in contradiction with
  $M(\psi^\pm(-t_0))=M(\psi^\pm_0)=m_0 > 0$.

  Next, fix $t \in (-\infty,0]$ and note that for $n$ large enough
  $t_n + \frac{t}{\lambda^2(t_n)} > 0$, hence
  \[
  \tilde \lambda_n(t)=\frac{\lambda(t_n +
    \frac{t}{\lambda^2(t_n)})}{\lambda(t_n)} \geq \frac12
  \]
  We claim that $\lim \sup_{n \rightarrow \infty} \tilde \lambda_n(t)
  < + \infty$. Assume this is not the case; then there is an
  increasing subsequence, still denoted by $\tilde \lambda_n(t)$, such
  that $\lim_{n \rightarrow \infty} \tilde \lambda_n(t)=+\infty$. On
  the other hand we have
  \[
  \begin{split}
    \frac1{\tilde \lambda_n (t)} \psi_n^\pm(\frac{r}{\tilde
      \lambda_n(t)},t)
    & = \frac1{\lambda(t_n + \frac{t}{\lambda^2(t_n)})} \psi^\pm_C(\frac{r}{\lambda(t_n + \frac{t}{\lambda^2(t_n)})}, t_n + \frac{t}{\lambda^2(t_n)}) \\
    & = \psi^\pm_{comp}(r, t_n + \frac{t}{\lambda^2(t_n)}) \in K
  \end{split}
  \]
  Since $\| \frac1{\tilde \lambda(t_n)} \psi_n^\pm(\frac{r}{\tilde
    \lambda(t_n)},t) - \frac1{\tilde \lambda(t_n)}
  \psi_0^\pm(\frac{r}{\tilde \lambda(t_n)},t)\|_{L^2} = \|
  \psi^\pm_n(t) - \psi^\pm(t) \|_{L^2}$ and $\frac1{\tilde
    \lambda(t_n)} \psi_0^\pm(\frac{r}{\tilde \lambda(t_n)},t)
  \rightarrow 0$ in $L^2$ (here we use that $\tilde \lambda_n(t)
  \rightarrow \infty$), it follows that $\psi^\pm_{comp}(r, t_n +
  \frac{t}{\lambda^2(t_n)}) \rightarrow 0$ in $L^2$ which violates the
  fact fact that $M(\psi^\pm_{comp}(t')) = m_0$ for all $t' \in \R$.

  Therefore we have established that $\frac12 \leq \tilde \lambda_n(t)
  < + \infty$ for each $t < 0$.  On a subsequence we have that $\tilde
  \lambda_n(t) \rightarrow \tilde \lambda(t)$ and by \eqref{limit}
  \[
  \frac1{\tilde \lambda (t)} \psi^\pm(\frac{r}{\tilde \lambda(t)},t)
  \in K
  \]
  with $\tilde \lambda(t) \geq \frac12$ which creates the solution
  claimed by part i) after applying the time reversal transformation
  $t \rightarrow -t$.

  ii) It suffices to consider the case $T_+ < \infty$ in which case
  the forward maximal life-span of the solution is $I_+=[0,T_+)$.  If
  $\lim_{t \rightarrow T_+} \lambda(t) \ne \infty$, then there is an
  increasing sequence $t_n \rightarrow T_+$ such that $\lambda(t_n)
  \rightarrow \lambda_0 \in [0,+\infty)$.

  By the compactness of $\bar K$, it follows that
  $\psi^\pm_n=\frac{1}{\lambda(t_n)}
  \psi^\pm_C(\frac{r}{\lambda(t_n)}, t_n) \rightarrow \psi^\pm_0$.
  Let $\psi^\pm,\psi^\pm_n$ be the solutions of \eqref{psieq} with
  initial data $\psi^\pm_0,\psi^\pm_{0,n}$ at time $t=T_+$.
  $\psi^\pm$ is defined on some $[T_+ - \delta, T_+ + \delta]$ with
  $\delta > 0$ and, by the Cauchy theory, $\psi^\pm_n$, for large
  enough $n$, is defined on $[T_+ - \frac{\delta}2, T_+ +
  \frac{\delta}2]$ with $\| \psi_n \|_{S([T_+ - \frac{\delta}2, T_+ +
    \frac{\delta}2])} < \infty$. On the other hand,
  \[
  \psi^\pm_C(r,t)= \lambda(t_n) \psi^\pm_n(\lambda(t_n)r, T_+ + t-t_n)
  \]
  and by choosing $n$ large enough so that $T_+ \leq t_n +
  \frac{\delta}2$ it follows that $\| \psi_C^\pm \|_{S[t_n,T_+)} <
  \infty$ which is a contradiction with the assumption that
  $\psi^\pm_C$ blows-up at time $T_+$.

\end{proof}
\section{Momentum and localized momentum.}
In this section we rule out the possible scenarios exhibited in
Theorem \ref{lambdacontrol}.  With the language used in Section
\ref{concomp}, we claim the following
\begin{theo} \label{lack} If $m_0 < 4$ critical elements do not exist.
\end{theo}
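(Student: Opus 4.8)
The plan is to contradict each of the two scenarios produced by Theorem~\ref{lambdacontrol} by means of a localized momentum (virial) identity, the decisive input being the strict negativity of $A_2$. Since $m_0<4$ means $\|\psi^\pm\|_{L^2}^2=m_0<4$, estimate \eqref{A2neg} gives $\sup_r A_2(r)\le -1+m_0/4<0$, together with $0\le 1+A_2\le m_0/4$ (using $|\psi_2|^2+A_2^2=1$). It is exactly this sign that the argument exploits, and exactly this sign that is lost once $m_0\ge 4$, which is the obstruction to $8\pi$ noted in the introduction.

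First I would introduce, for a smooth cutoff $\chi$ equal to $1$ on $[0,1]$ and supported in $[0,2]$, the truncated second moment $V_R(t)=\int_0^\infty \chi(r/R)\,r^2\big(|\psi^+|^2+|\psi^-|^2\big)\,r\,dr$, and its time derivative, a localized momentum $P_R(t)=\tfrac14\dot V_R(t)$ of the form $\int_0^\infty a_R(r)\,\Im(\bar\psi^+\partial_r\psi^++\bar\psi^-\partial_r\psi^-)\,r\,dr$ with $a_R(r)=\partial_r(\chi(r/R)r^2)\approx 2r$ on $r\lesssim R$. Here one uses that the nonlinear coefficients in \eqref{psieq} are real, so $\Im(\bar\psi^\pm N^\pm)=0$; hence the mass density is transported and $\dot V_R$ is a pure momentum with no nonlinear contribution. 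The core computation is the second derivative $\dot P_R=\tfrac14\ddot V_R$: differentiating in time and inserting \eqref{psieq}, \eqref{A2}, \eqref{A0} produces the usual positive quadratic gradient terms (of the form $\int|\partial_r\psi^\pm|^2\,r\,dr$ and the curvature term $\int|\psi^-|^2/r^2\,r\,dr$ coming from the $-4/r^2$ potential in $H^-$), a collection of nonlinear terms, and commutator errors supported in $r\sim R$. The nonlinear terms must be organized so that the only piece without an a priori sign is proportional to $(1+A_2)$ (respectively $-A_2$) paired against a nonnegative density; using $A_2<0$ this piece acquires the favorable sign, leaving a bound of the shape $\dot P_R\ge c_1\int\big(|\partial_r\psi^\pm|^2+|\psi^-|^2/r^2\big)\,r\,dr-c_2\,\mathcal E_R$, where $\mathcal E_R$ collects the cutoff errors.

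To rule out scenario (i) (global, $\lambda(t)\ge c>0$), I would invoke the compactness of $K^\pm$ from Theorem~\ref{cc}: by \eqref{psicon} the solution is uniformly tight, so choosing $R$ large makes $\mathcal E_R$ negligible uniformly in $t$, while scaling shows the kinetic integral is $\gtrsim\lambda(t)^2\ge c^2>0$. Hence $\dot P_R\ge\delta>0$ for all $t\ge 0$, and integration forces $P_R(t)$ to grow linearly, contradicting the uniform-in-time bound on $P_R$ dictated by compactness. For scenario (ii) (finite time $T_+<\infty$, $\lambda(t)\to\infty$), the same monotonicity is combined with mass conservation and the concentration \eqref{psicon}: as $t\to T_+$ all the mass collapses into $r\lesssim\lambda(t)^{-1}\to 0$, so $V_R$ and $P_R$ converge while the virial keeps $\dot P_R$ bounded away from $0$; integrating the identity up to $T_+$ and comparing with the finite total variation allowed by concentration yields the contradiction.

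The main obstacle is the virial computation itself: isolating the single sign-indefinite term and showing it is governed by $A_2$, which requires carefully handling the nonlocal pieces --- the $[r\partial_r]^{-1}$ tail of $A_0$ in \eqref{A0} and the nonlocal part of $A_2$ in \eqref{A2} --- together with the cutoff commutators, so that the $A_2<0$ term genuinely dominates. A second delicate point is the a priori boundedness of $P_R$ uniformly in time, since $L^2$-compactness alone does not control second moments, and this must be secured from the structure of the critical element. As emphasized above, it is precisely the loss of sign of this term for $m_0\ge 4$, and not any quantitative estimate, that confines the present method to the $4\pi$ threshold.
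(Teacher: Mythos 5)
Your overall strategy --- exploit the sign $A_2<0$ supplied by \eqref{A2neg} inside a localized virial identity to kill both scenarios of Theorem \ref{lambdacontrol} --- is the right one, but the proposal has two genuine gaps, and they are exactly the points where the paper's proof does something different. First, the decisive computation is never performed: you assert that $\ddot V_R$ "must be organized so that the only piece without an a priori sign is proportional to $(1+A_2)$," but this organization is the entire content of the theorem, and it is not at all clear it can be done for the standard second moment $V_R=\int \chi(r/R)r^2(|\psi^+|^2+|\psi^-|^2)\,rdr$; the nonlocal potentials $A_0$ and $(1+A_2)/r^2$ do not obviously recombine into a single $A_2$-signed density under two time differentiations. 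The paper does not use the second moment of $|\psi^\pm|^2$ at all. It works with the geometric densities: the "mass" is $1+A_2$ (i.e. $1+u_3$), the momentum is $M_1=\Re(\psi_1\bar\psi_2)/(1-A_2)$, and the identities \eqref{vir1} and \eqref{vir2} close \emph{algebraically} because the compatibility relations \eqref{comp} convert derivatives of gauge components into products of gauge components ($\partial_r\psi_2=iA_2\psi_1$, $\partial_r A_2=\Im(\psi_1\bar\psi_2)$). It is this structure that produces, in \eqref{virmid}, a bulk term $-\frac{A_2}{1-A_2}(|\psi_1|^2+|\psi_2|^2/r^2)$ whose favorable sign is manifest, plus errors controlled by the concentration property \eqref{psicon}.

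Second, your quantities are not defined at the regularity of the critical element. The critical element is only an $L^2$ solution, and both your momentum $P_R=\int a_R\,\Im(\bar\psi^\pm\partial_r\psi^\pm)\,rdr$ and the claimed lower bound $\dot P_R\gtrsim\int|\partial_r\psi^\pm|^2\,rdr\gtrsim\lambda(t)^2$ require one derivative of $\psi^\pm$, which $L^2$-compactness does not control; likewise the "uniform-in-time bound on $P_R$" you invoke has no source. You flag this as "delicate," but without resolving it the contradiction argument in scenario (i) cannot even be set up. The paper's momenta are derivative-free quadratics in $\psi_1,\psi_2/r\in L^2$ with the coefficient $1/(1-A_2)\le 1$, so the boundary term in \eqref{vir3} is bounded by $R^2E$ via Cauchy--Schwarz alone, and the whole identity is justified for $L^2$ solutions by regularization; the contradiction is then $TE\lesssim R^2E$ for $T\gg R^2$. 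Finally, your treatment of scenario (ii) is too vague to assess: the paper's argument there is of a different nature, using the transport identity \eqref{vir1} for $1+A_2$ together with \eqref{limitT} to conclude $\int(1+A_2(t))\,rdr=0$ at every fixed time, hence $A_2\equiv-1$, $\psi_2\equiv0$, $\psi_1\equiv0$, $\psi^\pm\equiv0$, contradicting blow-up --- no monotonicity or second moment enters at all.
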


This will be based on virial type identities. Virial identities for
the Schr\"odinger Map problem originate in the work of Grillakis and
Stefanopoulos via a Lagrangian approach, see \cite{GM}. In their work
the formulation of these identities is at the level of the conformal
coordinate, obtained by using the stereographic projection. Our
approach is different in the sense that we derive the virial
identities at the level of the gauge components. However our results
can be derived from \cite{GM}.

\subsection{Virial type identities.}
This section is concerned with identities involving solutions of
\eqref{psieq} which satisfy the compatibility condition
\eqref{compnew}.

Given $a: \R_+ \rightarrow R$ a smooth function, i.e.
$|(r\partial_r)^\alpha| \les_\alpha 1$, and which decays at infinity
we claim that
\begin{equation} \label{vir1} \frac{d}{dt} \int a(r) (1+A_2) r dr = -
  \int r \partial_r a(r) \Re( \psi_1 \frac{\bar{\psi}_2}r) rdr
\end{equation}

By using part i) of Theorem \ref{CT-CC}, the proof of \eqref{vir1}
goes as follows
\[
\begin{split}
  \frac{d}{dt} \int a(r) (1+A_2) r dr & = \int a(r) \partial_t A_2 rdr
  = \int a(r) \Im{(\psi_0 \bar{\psi}_2)}  r dr \\
  & = \int a(r)  \Im(i(\partial_r \psi_1 + \frac{1}r \psi_1 + \frac{iA_2}{r^2} \psi_2) \bar{\psi}_2) rdr \\
  & = \int a(r) \Im(i ( r \partial_r \psi_1 + \psi_1) \bar{\psi}_2) dr \\
  & = \int a(r)  \left( \Im(i \partial_r ( r \psi_1 \bar{\psi}_2)) - \Im(i r \psi_1 \partial_r \bar{\psi}_2) \right) dr \\
  & = - \int \partial_r a(r) \Im(i \psi_1 \bar{\psi}_2) rdr = - \int
  r \partial_r a(r) \Re( \psi_1 \frac{\bar{\psi}_2}r) rdr
\end{split}
\]
This computation is valid in a classical sense provided that $R_\pm
\psi^\pm \in H^2$.  For general functions $\psi^\pm$ this is done by
using a regularization argument as we did in the proof of part i) of
Theorem \ref{CT}. Note that the quantities involved on both sides of
\eqref{vir1} are meaningful in light of the fact that $\psi_0 \in \dot
H^{-1}_e$ and $a \psi_2 \in \dHe$.

We now introduce the two momenta, the radial and the temporal one, as
follows
\[
M_1 = \frac{\Re{(\psi_1 \bar{\psi}_2)}}{1-A_2} , \qquad M_0= -
\frac{\Re{(\psi_0 \bar{\psi}_2)}}{1-A_2}
\]
Using the covariant calculus, the time momentum can be further written
as follows
\[
\begin{split}
  -(1-A_2) M_0 &  =  \Re{(\psi_0 \bar{\psi}_2)} \\
  & = \Re{\left( i(D_1 \psi_1 + \frac{1}r \psi_1 + \frac{1}{r^2} D_2 \psi_2) \bar{\psi}_2 \right)} \\
  & = - \Im{(\partial_r  \psi_1 \bar{\psi}_2)} - \frac1r \Im{(\psi_1 \bar{\psi}_2)} - \frac{A_2}{r^2} |\psi_2|^2 \\
  & = - \partial_r \Im(\psi_1 \bar{\psi}_2) - \Im(\psi_1 \partial_r \bar{\psi}_2)  - \frac1r  \partial_r A_2 - \frac{A_2}{r^2} |\psi_2|^2 \\
  &= - \partial_r^2 A_2 - \frac1r \partial_r A_2 - A_2(|\psi_1|^2+
  \frac{|\psi_2|^2}{r^2})
\end{split}
\]
which leads to
\begin{equation}
  M_0= - \Delta \ln(1-A_2) - \left(\frac{ \partial_r A_2}{1-A_2} \right)^2 + \frac{A_2}{1-A_2} (|\psi_1|^2+ \frac{|\psi_2|^2}{r^2}) 
\end{equation}

The following identity plays a fundamental role in our analysis
\begin{equation} \label{vir2}
  \partial_t M_1 - \partial_r M_0 = - \partial_r A_0
\end{equation}
This is established by using the covariant rules of calculus,
\[
\begin{split}
  \partial_t M_1 & = \frac{\Re{(D_0 \psi_1 \bar{ \psi}_2)}}{1-A_2} +
  \frac{\Re{(\psi_1 \overline{ D_0 \psi_2})}}{1-A_2}
  + \frac{\Re{(\psi_1 \bar{\psi}_2)}}{(1-A_2)^2} \partial_t A_2 \\
  & = \frac{\Re{(D_1 \psi_0 \bar{ \psi}_2)}}{1-A_2} +
  \frac{\Re{(\psi_1 \overline{ D_2 \psi_0})}}{1-A_2}
  + \frac{\Re{(\psi_1 \bar{\psi}_2)}}{(1-A_2)^2} \Im{(\psi_0 \bar{\psi}_2)} \\
  & = \partial_r M_0 - \frac{\Re{(\psi_0 \partial_r \bar{
        \psi}_2)}}{1-A_2} - \frac{\Re{(\psi_0 \bar{
        \psi}_2)}}{(1-A_2)^2} \partial_r A_2 + \frac{\Re{(\psi_1
      \overline{ D_2 \psi_0})}}{1-A_2}
  + \frac{\Re{(\psi_1 \bar{\psi}_2)}}{(1-A_2)^2} \Im{(\psi_0 \bar{\psi}_2)} \\
  & = \partial_r M_0 - \frac{A_2 \Im{(\psi_0 \bar{ \psi}_1)}}{1-A_2} -
  \frac{\Re{(\psi_0 \bar{ \psi}_2)}}{(1-A_2)^2} \Im{(\psi_1
    \bar{\psi}_2)} + \frac{A_2 \Im{(\psi_1 \overline{\psi_0})}}{1-A_2}
  + \frac{\Re{(\psi_1 \bar{\psi}_2)}}{(1-A_2)^2} \Im{(\psi_0 \bar{\psi}_2)} \\
  & = \partial_r M_0 - 2 \frac{A_2 \Im{(\psi_0 \bar{ \psi}_1)}}{1-A_2}
  + \frac{|\psi_2|^2\Im{(\psi_0 \bar{\psi}_1)}}{(1-A_2)^2}  \\
  & = \partial_r M_0 + \Im{(\psi_0 \bar{ \psi}_1)} \\
  & = \partial_r M_0 -  \partial_r A_0 \\
\end{split}
\]
The above computation is meaningful provided that $R_\pm \psi^\pm \in
H^3$.

Next we derive a localized version of \eqref{vir2} which has also the
advantage that it makes sense for $\psi^\pm \in L^2$ only. We take $a:
\R_+ \rightarrow \R$ to be a smooth function which decays at infinity
and satisfies also $|\frac1r \partial_r a | \les 1$ and
$|\partial_r^2| \les 1$. As a consequence we have that if $f \in \dHe$
then $\frac1r f \partial_r a \in \dHe$.

We multiply \eqref{vir2} by $a$ and integrate by parts as follows
\begin{equation} \label{vir3} \int a(r) M_1(r) dr \left|_0^T \right. +
  \int_0^T \int \partial_r a(r) M_0 dr = \int_0^T \int \partial_r a(r)
  A_0 dr
\end{equation}
This identity is now meaningful for $\psi^\pm \in L^2$. Indeed each
term is well-defined for the following reasons:

- the first since $a$ is bounded and $\frac1r M_1 \in L^2$,

- the second since $\psi_0 \in \dot H_e^{-1}$ and $\frac1r \partial_r
a \cdot \psi_2 \in \dHe$,

- the third since $\frac1r \partial_r a$ is bounded and $A_0 \in L^1$.

The justification of \eqref{vir3} for general $\psi^\pm \in L^2$ is
done by regularizing $\psi^\pm$ as above.

It will be useful to rewrite the second term on the left-hand side as
follows
\[
\begin{split}
  \int \partial_r a(r) M_0 dr & = \int \frac{1}r \partial_r a(r) \left( - \Delta \ln(1-A_2) - \left(\frac{ \partial_r A_2}{1-A_2} \right)^2 + \frac{A_2}{1-A_2} (|\psi_1|^2+ \frac{|\psi_2|^2}{r^2}) \right) rdr \\
  & =  \int \partial_r (\frac{1}r \partial_r a(r)) \partial_r \ln(1-A_2) rdr \\
  & - \int \frac{1}r \partial_r a(r) \left( \left(\frac{ \partial_r
        A_2}{1-A_2} \right)^2 - \frac{A_2}{1-A_2} (|\psi_1|^2+
    \frac{|\psi_2|^2}{r^2}) \right) rdr
\end{split}
\]

\subsection{Proof of Theorem \ref{lack}.} The argument is in the
spirit of the corresponding one in \cite{KeMe1}.

Based on a localized version of \eqref{vir1} and \eqref{vir3} we rule
out the possibilities exhibited in parts i) and ii) of Theorem
\ref{lambdacontrol}.

In order to do so, we make a few important remarks. Since we are in
the setup of $E(u) < 4 \pi$, which translates into $\| \psi^\pm
\|_{L^2} < 2$, we obtain from \eqref{A2neg} that
\[
\frac{-A_2}{1-A_2} \ges 1
\]
Next, by using \eqref{rela} and \eqref{loc2est}, the concentration
property \eqref{psicon} implies
\[
\int_{r \ges C(\eta) c^{-1}} (| \psi_1(r) |^2 +
\frac{|\psi_2(r)|}{r^2} + \frac{(A_2(r)+1)^2}{r^2}) rdr \les \eta,
\qquad \forall t \in I_+.
\]
We start by ruling out the existence of a critical element from part
i) of Theorem \ref{lambdacontrol}, i.e. the global element with
$\lambda(t) \geq c > 0, \forall t > 0$.  In \eqref{vir3}, we take
$a(r)=r^2 \phi(\frac{r}{R})$ where $\phi$ is smooth and equals $1$ for
$ r \leq 1$ and $0$ for $r \leq 2$, and obtain
\begin{equation} \label{virmid}
  \begin{split}
    \int a(r) M_1(r) dr \left|_0^T \right. & = \int_0^T \int \partial_r  (\frac{1}r \partial_r a(r)) \partial_r \ln(1-A_2) rdr dt  \\
    & + \int_0^T \int  \frac{1}r \partial_r a(r) \left( \left(\frac{ \partial_r A_2}{1-A_2} \right)^2 - \frac{A_2}{1-A_2} (|\psi_1|^2+ \frac{|\psi_2|^2}{r^2}) \right) rdr  dt \\
    & + \int_0^T \int \partial_r a(r) A_0 dr dt
  \end{split}
\end{equation}
In this identity there are two main terms which we compare against
each other: the first on the left-hand side and the second on the
right-hand side. All the other terms are controlled by one of the two
main terms just mentioned.

We choose $\eta \ll 1$ small enough (the exact choice is derived from
the inequalities on the error terms below) and $R = C(\eta) c^{-1} \gg
c^{-1}$; we estimate the main terms in the above expression by
\[
|\int a(r) M_1 dr | \les \int r^2 |\psi_1| |\frac{\psi_2}r| rdr \les
R^2 \| \psi_1 \|_{L^2} \| \frac{\psi_2}r \|_{L^2} \les R^2 E
\]
which is valid both at $t=0$ and $t=T$, and
\[
\int_0^T \int \frac{1}r \partial_r a(r) \left( \left(\frac{ \partial_r
      A_2}{1-A_2} \right)^2 - \frac{A_2}{1-A_2} (|\psi_1|^2+
  \frac{|\psi_2|^2}{r^2}) \right) rdr dt \ges T E
\]
By choosing $T \gg R^2$ we obtain a contradiction, provided that we
establish that all the other terms involved in \eqref{virmid} are of
error type.

The first term on the left-hand side of \eqref{virmid} is bounded as
follows
\[
\begin{split}
  |\int_0^T \int \partial_r (\frac{1}r \partial_r a(r)) \partial_r
  \ln(1-A_2) rdr dt| \les \int_0^T \int_{r \approx R} |\partial_r A_2|
  dr dt \les T \eta^\frac12 \ll TE
\end{split}
\]
For the third term on the right-hand side of \eqref{virmid} we use
\eqref{A0c} and write
\[
|\int_0^T \int \partial_r a(r) A_0 dr dt| =|\int_0^T \int (-2+
\frac1r \partial_r a(r)) A_0 r dr dt|
\]
The coefficient $-2+
\frac1r \partial_r a(r)$ is supported in $r \geq R$, so the above 
expression is further bounded by
\[
\les \int_0^T \| \psi_1 \|_{L^2[R, \infty)} \| \frac{\psi_2}r
\|_{L^2[R, \infty)} dt \les T \eta \ll TE
\]
We have just shown that the other two terms in \eqref{virmid} are of
error type and this finishes the contradiction argument. Therefore we have shown 
that the scenario exhibited in part i) of Theorem \ref{lambdacontrol} cannot happen.

Next we rule out the critical element of type exhibited in part
ii). In this case the assumption is that we have a critical element
with $T_+ <\infty, \lim_{t \rightarrow T_+} \lambda(t)=+\infty$.

For fixed $R$ we claim that
\begin{equation} \label{limitT} \lim_{t \rightarrow T_+} \int
  \phi(\frac{r}R) (1+A_2) rdr = 0
\end{equation}
Indeed, for given $\epsilon > 0$, pick $\eta$ such that $\eta R^2 <
\epsilon$.  Recalling that $|u_1|^2+|u_2|^2=|\psi_2|^2$ and using
\eqref{loc2est} we obtain
\[
\begin{split}
  & \| \phi(\frac{r}R) (1+A_2) \|_{L^1} \\
  \les & (C(\eta) \lambda(t)^{-1})^2 \| \frac{1+A_2}{r}
  \|_{L^2(0,C(\eta)\lambda^{-1}(t)]}
  + R^2 \| \frac{1+A_2}{r} \|_{L^2[C(\eta) \lambda^{-1}(t),R]}  \\
  \les & (C(\eta) \lambda(t)^{-1})^2 E + \eta R^2
\end{split}
\]
By choosing $t$ close enough to $T_+$, we obtain $ (C(\eta)
\lambda(t)^{-1})^2 E < \epsilon$, and this establishes \eqref{limitT}.

Next we choose $a(r)=\phi(\frac{r}R)$, fix $\eta > 0$, integrate
\eqref{vir1} on $[t,T_+)$ and use \eqref{limitT} to obtain
\[
\int \phi(\frac{x}R) (1+A_2(r,t)) rdr \les (T_+ - t) \| \psi_1(t)
\|_{L^2(|x| \approx R)} \| \frac{\psi_2(t)}r \|_{L^2(|x| \approx R)}
\les (T_+ - t) \eta
\]
provided that $R \ges C(\eta)\lambda(t)^{-1}$. By fixing $t$ and
taking $\eta \rightarrow 0$ (which also forces $R \rightarrow
\infty$), it follows that
\[
\int (1+A_2(r,t)) rdr = 0
\]
which implies $A_2(t) \equiv -1$ hence, by \eqref{cons} and then by
\eqref{comp} it follows that $\psi_2(t) \equiv 0$ and $\psi_1(t)
\equiv 0$. Finally this implies by \eqref{rela} that $\psi^\pm(t)=0$
which contradicts the blow-up hypothesis at time $T_+$ (since the
solution is globally in time $\equiv 0$).

\section{Proof of the main result}

This section is dedicated to the proof of Theorem \ref{MT}. Given an
initial data $u_0 \in \dot H^1 \cap \dot H^3$, by using Theorem
\ref{clasic} it follows that it has a unique local solution on $[0,T]$
for some $T >0$.  On this interval we use sections \ref{CG} and
\ref{COG} to construct the associated fields $\psi^\pm$ obeying the
system \eqref{psieq} and whose mass satisfies $\| \psi^+_0 \|_{L^2}=\|
\psi^-_0 \|_{L^2} < 2$. By using Theorem \ref{lack} (and the previous
reduction from Section \ref{concomp}) it follows that the solution
$\psi^\pm$ is globally defined on $[0,+\infty)$ and with $\| \psi^\pm
\|_{L^4(\R_+ \times \R_+)} < + \infty$. By part vii) of Theorem
\ref{CT} the $H^2$ regularity of $R_\pm \psi^\pm_0$ is propagated at
all times $t \geq 0$. Invoking Proposition \ref{greg} this implies
that $u(t) \in \dot H^1 \cap \dot H^3$ with bounds depending on $\|
\psi^\pm \|_{L^4(\R_+ \times \R_+)} , \| R_\pm \psi^\pm_0 \|_{H^2}$
and $t$. Using again Theorem \ref{clasic}, this means that the
solution $u(t)$ can be continued past time $T$ and in fact for all
times $t \geq 0$ with $u(t) \in L^\infty_t (\R_+: \dot H^1 \cap \dot
H^3)$. The scattering part mentioned refers to the scattering for
$\psi^\pm(t)$, which follows from the Cauchy theory for the syste
\eqref{psieq}, see Theorem \ref{CT}.

Next we continue with part ii) of the Therorem \ref{MT}. From \eqref{dpsi}
we obtain the Lipschitz continuity of $\psi^\pm$ with respect $u$ as a map
from $\dot H^1$ to $L^2$. From the Cauchy theory for the system \eqref{psieq} which $\psi^\pm(t)$
obey, see Theorem \ref{CT}, we obtain the Lipschitz continuity of the $\psi^\pm(t)$ with respect to
its initial data. Finally by invoking \eqref{udif}, for each $t$, we obtain the Lipschitz continuity $u(t)$
with respect to $u_0$ in $\dot H^1$. 

\section{Appendix}

\begin{proof}[Proof of Proposition \ref{greg}]
  We write the arguments below in a qualitative fashion in order to
  have a concise argument.  However one easily sees that the argument
  below provides quantitive bounds which lead to \eqref{rtr}.

  We first read the information $u \in \dot H^2$.  Since $\Delta u \in
  L^2$ it follows that $\partial_r^2 u , \frac1r (\partial_r +
  \frac1r \partial^2_\theta) u \in L^2$, from which, using the
  equivariance property, we obtain
  \begin{equation} \label{uH2}
    \partial_r^2 u, \frac1r (\partial_r - \frac1{r}) (u_1,u_2), \frac1r \partial_r u_3 \in L^2.
  \end{equation}
  Since $|u|=1$ it follows that
  \[
  \frac{u_1 \partial_r u_1 + u_2 \partial_r u_2}r =
  -\frac{u_3 \partial_r u_3 }r \in L^2
  \]
  and by invoking $ \frac1r (\partial_r - \frac1{r}) (u_1,u_2) \in
  L^2$, we obtain $\frac{u_1^2+u_2^2}{r^2} \in L^2$.

  Next we read the information that $u \in \dot H^3$ then $\partial_x
  \Delta u, \partial_y \Delta u \in L^2$ from which, using that
  $r \partial_r = x\partial_x + y \partial_y$ it follows
  \begin{equation} \label{uH3}
    \partial_r (\partial_r^2 + \frac{1}r \partial_r-\frac{1}{r^2}) u_1, 
    \partial_r (\partial_r^2 + \frac{1}r \partial_r-\frac{1}{r^2}) u_2, 
    \partial_r (\partial_r^2 + \frac{1}r \partial_r) u_3 \in L^2
  \end{equation}
  and, using that $\partial_\theta=x\partial_y - y \partial_x$ and the
  equivariance of $\Delta u$,
  \begin{equation} \label{uH33} \frac1r (\partial_r^2 +
    \frac{1}r \partial_r-\frac{1}{r^2}) u_1, \frac1r (\partial_r^2 +
    \frac{1}r \partial_r-\frac{1}{r^2}) u_2 \in L^2.
  \end{equation}

  Since $D_r (v+iw)=0$ it follows that
  \[
  \partial_r \psi^\pm = \partial_r \left( W^\pm \cdot (v + i w)
  \right) = (\partial_r W^\pm) \cdot (v + i w)
  \]
  A direct computation gives
  \[
  \begin{split}
    \partial_r W^\pm & = \partial_r^2 u \pm \partial_r (\frac{u \times Ru}r) \\
    & = \partial_r^2 u \mp \frac{\partial_r u_3 \cdot u + u_3 \cdot \partial_r u}{r} \mp \frac{\overrightarrow{k}- u_3 \cdot u}{r^2} \\
    & = (\partial_r^2 \pm \frac{1}r \partial_r \mp \frac1{r^2}) u \mp
    \frac{1+u_3}r \partial_r u \mp \frac{\overrightarrow{k}}{r^2} +
    f^\pm u
  \end{split}
  \]
  where
  \[
  f^\pm= \mp \frac{\partial_r u_3}r \pm \frac{1+u_3}{r^2}
  \]
  We then continue with
  \[
  \begin{split}
    \partial_r \psi^\pm & = \left( (\partial_r^2 \pm
      \frac{1}r \partial_r \mp \frac{1}{r^2}) u_1, (\partial_r^2 \pm
      \frac{1}r \partial_r \mp \frac{1}{r^2}) u_2,
      (\partial_r^2 \pm \frac{1}r \partial_r) u_3 \right) \cdot (v+iw) \\
    & \mp \frac{1+u_3}r \psi_1
    \mp i \frac{(1+u_3)\psi_2}{r^2} \\
    & = F^\pm - \frac{1+u_3}{r} \psi^+
  \end{split}
  \]
  where $F^\pm \in L^2$ from \eqref{uH2}. Since $\frac{1+u_3}{r}=
  \frac1{1-u_3} \frac{u_1^2 + u_2^2}{r} \in L^1(dr)$, by using the
  integrating factor it follows that $\partial_r (e^{\int
    \frac{1+u_3}r dr} \psi^+) \in L^2$. By Sobolev embedding, in two
  dimensions, we obtain $\psi^+ \in L^4$ and since, by above,
  $\frac{1+u_3}r \in L^4$ we obtain $\partial_r \psi^+ \in L^2$.  It
  also follows that $\partial_r \psi^- \in L^2$.

  To show that $\frac{\psi^-}r \in L^2$ we write
  \[
  \frac1r W^- = \frac1r \left( (\partial_r -\frac1r) u_1, (\partial_r
    -\frac1r) u_2, \partial_r u_3 \right) + (\frac{(1+u_3)u_1}{r^2},
  \frac{(1+u_3)u_2}{r^2},\frac{u_1^2+u_2^2}{r^2})
  \]
  and use \eqref{uH2} and the fact that $\frac{u_1^2+u_2^2}{r^2} \in
  L^2$.

  Hence we have just established that $R_\pm \psi^\pm \in H^1$.  The
  procedure can be easily reversed, i.e. if $R_\pm \psi^\pm \in H^1$
  then $u \in \dot H^2$. Indeed, the additional regularity gives, by
  Sobolev embedding, that $\psi^\pm \in L^4$, hence $\frac{\psi_2}r
  \in L^4$ which implies $\frac{1+u_3}r \in L^4$. Therefore
  $\frac{1+u_3}r \psi^+ \in L^2$ and this implies that $F^\pm \in
  L^2$. This takes care of the covariant part of $\Delta u$. The part
  of $\Delta$ in the normal space is $f^\pm= \mp \frac{\Im(\psi_1 \bar
    \psi_2)}r \pm \frac{1}{1-u_3}\frac{u_1^2+u_2^2}{r^2}$ which
  belongs to $L^2$ since $\psi^\pm \in L^4$ by using Sobolev
  embeddings.

  Next we transfer third derivatives of $u$ to second derivatives for
  $\phi^\pm$ and vice-versa.  Using the above computation for
  $\partial_r \psi^+$, we have
  \[
  \begin{split}
    \partial_r^2 \psi^+ = \partial_r F^+ - \frac{1+u_3}r \partial_r
    \psi^+ - \frac{\partial_r A_2}{r} \psi^+ + \frac{1+A_2}{r^2}
    \psi^+
  \end{split}
  \]
  From \eqref{uH3} and the fact that $D_r v=D_r w=0$ we obtain that
  $\partial_r (F_1^+, F_2^+, F_3^+) \in L^2$.  Using the Sobolev
  embedding, we have that $\psi^\pm \in L^6$, hence $\frac{\psi_2}r
  \in L^6$ and $\frac{1+A_2}{r^2}=\frac1{1-A_2}\frac{|\psi_2|^2}{r^2}
  \in L^3$. Therefore, by gauging the middle terms we obtain
  \[
  \partial_r ( e^{\int \frac{1+u_3}s} \partial_r \psi^+) \in L^2
  \]
  which further gives $e^{\int \frac{1+u_3}s} \partial_r \psi^+ \in
  L^4$ (here it is important that we already have the information that
  $\partial_r \psi^+ \in L^2$). Since $\frac{1+A_2}r=\frac1{1-A_2}
  \frac{|\psi_2|^2}r \in L^4$ we obtain that in fact
  $\frac{1+u_3}r \partial_r \psi^+ \in L^2$, therefore $\partial_r^2
  \psi^+ \in L^2$.

  Next we have
  \[
  \frac{1}r \partial_r \psi^+ = \frac{1}r F^+ - \frac{1+A_2}{r^2}
  \psi^+
  \]
  From the above arguments we have that $\frac{1+A_2}{r^2} \in L^3,
  \psi^+ \in L^6$, hence the last term belongs to $L^2$. From
  \eqref{uH33} we obtain the first two components of $\frac1r F^+$ in
  $L^2$.  As for $\frac1r F_3^+$ we write
  \[
  \frac1r F_3^+= i \frac{\psi_2}r (\partial_r^2 +
  \frac{1}r \partial_r) u_3
  \]
  and, bu Sobolev embeddings, $\Delta u_3 \in L^4$ and $\frac{\psi_2}r
  \in L^4$, hence $\frac1r F_3^+ \in L^2$.  This finishes the argument
  for $\partial_r^2 \psi^+, \frac1r \partial_r \psi^+ \in L^2$ which
  implies that $R_+ \psi_+ \in H^2$.

  Next, we have
  \[
  \begin{split}
    H^- \psi^- & = (\partial_r+\frac1r) \partial_r \psi^- -\frac{4}{r^2} \psi^- \\
    & = (\partial_r+\frac1r) F^- + \frac{1+u_3}r  \partial_r \psi^+ + \frac{\partial_r A_2}{r} \psi^+  -\frac{4}{r^2} \psi^- \\
    & = \left( (\partial_r^3 - \frac{3}{r^2} \partial_r + \frac3{r^3})
      u_1, (\partial_r^3 - \frac{3}{r^2} \partial_r + \frac3{r^3})
      u_2,
      (\partial_r^3 - \frac1{r^2} \partial_r  )u_3 \right) \cdot (v+iw)\\
    & + \frac{1+u_3}r \partial_r \psi^+ + \frac{\partial_r A_2}{r}
    \psi^+ - 4 \frac{1+u_3}{r^3}(u_1,u_2,1-u_3) \cdot (v+iw)
  \end{split}
  \]
  Since
  \[
  \partial_r^3 - \frac{3}{r^2} \partial_r + \frac3{r^3} =
  \partial_r (\partial_r^2 + \frac{1}r \partial_r-\frac{1}{r^2}) -
  \frac1r (\partial_r^2 + \frac{1}r \partial_r-\frac{1}{r^2})
  \]
  we obtain from \eqref{uH3} and \eqref{uH33} that $ (\partial_r^3 -
  \frac{3}{r^2} \partial_r + \frac3{r^3}) u_1, (\partial_r^3 -
  \frac{3}{r^2} \partial_r + \frac3{r^3}) u_2 \in L^2$.  Concerning
  the term $(\partial_r^3 - \frac1{r^2} \partial_r )u_3 (v_3+iw_3)$ we
  use \eqref{uH3} so that it is enough to show that $\frac1r
  \psi_2 \partial_r^2 u_3 \in L^2$. By Sobolev embedding $\Delta u_3
  \in L^4$ and $\frac{\psi_2}r \in L^4$, hence it is enough to show
  that $\frac{1}{r^2} \psi_2 \partial_r u_3 \in L^2$ which follows
  since, by \eqref{uH2}, $\frac{\partial_r u_3}r \in L^2$ and
  $\frac{\psi_2}{r} \in L^\infty$; the last follows since $\psi^+ \in
  H^2 \subset L^\infty$ (here we refer to the two dimensional
  extension) and $\psi^- \in \dHe \subset L^\infty$.

  Next, from the argument for $\Delta \psi^+$ we have that
  $\frac{1+u_3}r \partial_r \psi^+ + \frac{\partial_r A_2}{r} \psi^+
  \in L^2$.  For the last term we use the inequality
  \[
  |\frac{1+u_3}{r^3}(u_1,u_2,1-u_3) \cdot (v+iw) | \les
  \frac{|\psi_2|^3}{r^3}
  \]
  which can then be shown to be in $L^2$ by the above arguments since
  $\frac{\psi_2}r \in L^4 \cap L^\infty$.  This finishes the proof of
  the fact that $H^- \psi^- \in L^2$, and hence of the $H^2$
  regularity of $R_- \psi^-$.

  Finally, one needs to show that if $R_\pm \psi^\pm \in H^2$ then $u
  \in \dot H^3$. This is done by backtracking the previous argument
  with the final goal of establishing \eqref{uH3} and \eqref{uH33}.
  The direct backtracking argument provides the covariant version of
  \eqref{uH3} and \eqref{uH33}, i.e. their part in the tangent
  bundle. For the part in the normal bundle, i.e. in the direction of
  $u$, one need to involve $f^\pm$ as we did before. This part
  involves one derivative less, hence it is easier. The details are
  left to the reader.
\end{proof}

\begin{lema} Assume that $f \in L^2$ . Then the following holds true
  \begin{equation} \label{pd2} \lim_{t \rightarrow \infty} \sup_{r \in
      (0,\infty)} \left | r \int_r^\infty \frac{e^{itH^-} f}{s}
      ds\right | = 0
  \end{equation}
  If $g^n=g_{\l_n,t_n}, n \in \N$ is asymptotically orthogonal to the
  constant sequence $g_{1,0}$ and $K \subset (0,+\infty)$ is compact,
  then the following holds true
  \begin{equation} \label{ps2} \lim_{n \rightarrow \infty} \| g^n f
    \|_{L^2(K)} = 0.
  \end{equation}
\end{lema}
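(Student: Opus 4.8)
Both statements are dispersive decay facts, and in each case the plan is to combine a uniform a priori bound with a reduction to nice data and an explicit dispersive estimate. For \eqref{pd2} I would first record the uniform bound $\sup_{r}\big|r\int_r^\infty \frac{g(s)}{s}\,ds\big|\les \|g\|_{L^2}$, valid for every $g\in L^2$ (Cauchy--Schwarz in $ds$, equivalently the mapping $r[r\partial_r]^{-1}:L^2\to\dHe\subset L^\infty$ from \eqref{rdrm}). Since $e^{itH^-}$ is an $L^2$ isometry, the quantity in \eqref{pd2} is bounded by $C\|f\|_{L^2}$ uniformly in $t$, so by a standard $\varepsilon/2$ density argument it suffices to prove \eqref{pd2} for $f$ in the dense class with $\mathcal F_2 f\in C_c^\infty((0,\infty))$.

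For such $f$ I would insert the spectral representation $e^{itH^-}f(s)=\int_0^\infty e^{-it\xi^2}J_2(s\xi)\mathcal F_2 f(\xi)\,\xi\,d\xi$, exchange the order of integration (legitimate by the compact support), and evaluate the inner integral using \eqref{derB}: since $\partial_s\big(J_1(s\xi)/(s\xi)\big)=-J_2(s\xi)/s$ and $J_1(s\xi)/(s\xi)\to0$ as $s\to\infty$, one gets $\int_r^\infty J_2(s\xi)/s\,ds=J_1(r\xi)/(r\xi)$. Hence
\[
r\int_r^\infty \frac{e^{itH^-}f}{s}\,ds=\int_0^\infty e^{-it\xi^2}J_1(r\xi)\,\mathcal F_2 f(\xi)\,d\xi = e^{itH_1}h(r),
\]
where $h$ is defined by $\mathcal F_1 h(\xi)=\mathcal F_2 f(\xi)/\xi$; because $\mathcal F_2 f$ is supported away from the origin, $h$ is again nice with $R_1 h$ Schwartz, so $R_1 h\in L^1(\R^2)$. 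The identity $R_1 e^{itH_1}h=e^{it\Delta}R_1 h$ together with the two-dimensional dispersive estimate then gives $\|e^{itH_1}h\|_{L^\infty_r}=\|e^{it\Delta}R_1 h\|_{L^\infty(\R^2)}\les |t|^{-1}\|R_1 h\|_{L^1}\to 0$, which is \eqref{pd2} on the dense class and hence in general.

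For \eqref{ps2} the starting point is the change of variables $\rho=\lambda_n^{-1}r$, which yields $\|g^n f\|_{L^2(K)}=\|e^{it_nH^-}f\|_{L^2(\lambda_n^{-1}K)}$; writing $K\subset[a,b]\subset(0,\infty)$, the target set $\lambda_n^{-1}K$ is an annulus of radii $\lambda_n^{-1}a$ and $\lambda_n^{-1}b$. Again by density I reduce to $f$ with $\mathcal F_2 f\in C_c^\infty((0,\infty))$, so that $R_2 f$ is Schwartz with Fourier support in an annulus $\{\,|\eta|\in[\xi_0,\xi_1]\,\}$, and pass to a subsequence with $\lambda_n\to\lambda_*\in[0,\infty]$. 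If $\lambda_*=\infty$, the annulus $\lambda_n^{-1}K$ shrinks to the origin with area $O(\lambda_n^{-2})\to0$, and the uniform bound $\|e^{it_n\Delta}R_2 f\|_{L^\infty}\les\|\widehat{R_2 f}\|_{L^1}\les1$ times $|\lambda_n^{-1}K|^{1/2}$ closes the estimate. If $\lambda_*\in(0,\infty)$, then \eqref{assort} forces $|t_n|\lambda_n^2\to\infty$, hence $|t_n|\to\infty$, while $\lambda_n^{-1}K$ converges to a fixed compact annulus, so the dispersive decay $\|e^{it_n\Delta}R_2 f\|_{L^\infty}\les|t_n|^{-1}$ again gives vanishing.

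The remaining and main obstacle is the case $\lambda_*=0$, where $\lambda_n^{-1}K$ runs off to radii $\sim\lambda_n^{-1}\to\infty$. Here the plan is to use the stationary-phase localization of $e^{it\Delta}R_2 f$: for $|t_n|$ bounded the evolved profile is, after passing to the convergent subsequence $t_n\to t_*$, a fixed $L^2$ function up to an $o(1)$ error, and its mass at radius $\to\infty$ tends to zero; for $|t_n|\to\infty$ the bulk of $e^{it_n\Delta}R_2 f$ concentrates, up to rapidly decaying tails, in the moving annulus $\{\,|x|\sim 2|t_n|\xi,\ \xi\in[\xi_0,\xi_1]\,\}$. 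The delicate step is to show that \eqref{assort} separates this concentration annulus from the target annulus $\lambda_n^{-1}K$, which requires exploiting the interplay between the scaling parameter $\lambda_n$ and the time parameter $t_n$ with care; this is the crux of the argument. Once the separation is in hand, the decay of the dispersive kernel off the concentration annulus yields $\|e^{it_nH^-}f\|_{L^2(\lambda_n^{-1}K)}\to0$, completing \eqref{ps2} on the dense class and, by density, in general.
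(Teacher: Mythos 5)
Your proof of \eqref{pd2} is correct, and it takes a genuinely different route from the paper's. Both arguments share the same skeleton (the uniform Cauchy--Schwarz bound $\sup_r |r\int_r^\infty s^{-1}e^{itH^-}f\,ds| \les \|f\|_{L^2}$ plus a density reduction to frequency-localized data), but the core is different: you use the Bessel identity from \eqref{derB}, $\partial_s\bigl(J_1(s\xi)/(s\xi)\bigr) = -J_2(s\xi)/s$, to evaluate $\int_r^\infty J_2(s\xi)s^{-1}\,ds = J_1(r\xi)/(r\xi)$ and thereby identify $r\int_r^\infty s^{-1}e^{itH^-}f\,ds = e^{itH_1}h$ with $\mathcal F_1 h = \xi^{-1}\mathcal F_2 f$, after which the two-dimensional dispersive estimate applied to the Schwartz function $R_1h$ gives the uniform $O(|t|^{-1})$ decay. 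The paper instead writes $f = H^-g$ on its dense class, proves the pointwise wave-packet bounds \eqref{fflow} for $e^{itH^-}g$ and its derivatives, and integrates by parts twice, using that $H^-(s^{-2})=0$ so that only boundary terms of size $O(t^{-1})$ survive. Your identity-based computation is cleaner and self-contained for \eqref{pd2}; the paper's detour pays off only because the bounds \eqref{fflow} are then reused for \eqref{ps2}.

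For \eqref{ps2} there is a genuine gap, located exactly at the step you flag as ``the crux.'' In the regime $\lambda_n \to 0$, $|t_n| \to \infty$ you assert that \eqref{assort} separates the concentration annulus $\{\,|x| \approx 2|t_n|\xi:\ \xi \in \mathrm{supp}\,\mathcal F_2 f\,\}$ from the target annulus $\lambda_n^{-1}K$, and you defer the proof. No such separation follows from \eqref{assort}: with $(\tilde\lambda_n,\tilde t_n)=(1,0)$ that condition only demands $\lambda_n + \lambda_n^{-1} + |t_n|\lambda_n^2 \to \infty$, which is already satisfied whenever $\lambda_n \to 0$, and it places no constraint on the product $|t_n|\lambda_n$ that governs where the mass of $g^nf$ sits at time zero. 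Concretely, take $\mathcal F_2 f$ smooth and supported in $[1,2]$, and $\lambda_n = 1/n$, $t_n = n$; then $g^n$ is asymptotically orthogonal to $g_{1,0}$ in the sense of \eqref{assort}, yet by the large-time asymptotics $e^{it\Delta}\phi(x) = (4\pi i t)^{-1}e^{i|x|^2/(4t)}\widehat{\phi}(x/(2t)) + o_{L^2}(1)$ the mass of $e^{it_nH^-}f$ lies, up to $o_{L^2}(1)$, in $2n \leq \rho \leq 4n$, so the mass of $g^nf = \lambda_n^{-1}[e^{it_nH^-}f](\lambda_n^{-1}\cdot)$ lies in the fixed annulus $2 \leq r \leq 4$. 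Taking $K=[1,5]$ gives $\|g^nf\|_{L^2(K)} \to \|f\|_{L^2} > 0$. So in the regime $|t_n|\lambda_n \approx 1$ the separation you hope for, and with it the conclusion \eqref{ps2} itself, fails under \eqref{assort} as stated; the missing step cannot be filled, only bypassed by excluding that regime from the hypothesis (your other three cases implicitly live in the complementary regimes $|t_n|\lambda_n \to 0$ or $\to\infty$, or $\lambda_n \to \infty$).

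It is worth noting that this is also precisely the regime in which the paper's own one-line deduction of \eqref{ps2} from \eqref{fflow} degenerates: near $\lambda_n^{-1}r \approx |t_n|$ the bound \eqref{fflow} only yields $|g^nf(r)| \les \lambda_n^{-1}\langle t_n \rangle^{-1}$, which is $O(1)$ when $|t_n|\lambda_n \approx 1$, and the example above (which lies in the paper's dense class) shows no better bound is available. Your case analysis is correct everywhere else --- $\lambda_n \to \infty$; $\lambda_n \to \lambda_* \in (0,\infty)$, where \eqref{assort} does force $|t_n| \to \infty$; and $\lambda_n \to 0$ with $t_n$ bounded --- and your diagnosis of the dangerous interaction between $\lambda_n$ and $t_n$ is exactly right. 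But as written, the proposal proves \eqref{ps2} only outside the regime $|t_n|\lambda_n \approx 1$, and no argument of the proposed type can cover it.
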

\begin{proof}
  By the Cauchy-Schwarz inequality we have the uniform bound
  \[
  \left | r \int_r^\infty \frac{e^{itH^-} f}{s} ds\right | \lesssim
  \|e^{itH^-} f\|_{L^2} = \| f\|_{L^2}
  \]
  Hence for both \eqref{pd2} and \eqref{ps2} it suffices to prove that
  they hold for $f$ in a dense subset of $L^2$. To choose a suitable
  dense subset we first restrict ourselves to $f$ with compact
  support. Secondly, we recall that the functions $e^{2i\theta}
  e^{itH^-} f$ are solutions to the linear Schr\"odinger equations
  with initial data $e^{2i\theta} f$.  We localize $e^{2i\theta} f(r)$
  in frequency away from frequency $0$ and infinity.  If we use
  spherically symmetric multipliers this keeps the frequency localized
  functions of the form $e^{2i\theta} f(r)$ as desired. The outcome of
  this is to reduce the problem to the case when $e^{2i\theta} f(r)$
  is a Schwartz function with all moments equal to zero. This also
  implies that $f$ vanishes of infinite order at zero. Furthermore,
  due to the frequency localization away from zero we also know that
  $e^{2i\theta} f(r)$ can be represented as
  \[
  e^{2i\theta} f(r) = \Delta (e^{2i\theta} g(r))
  \]
  with $g$ in the same class.

  For initial data as described above, the solution to the linear
  Schr\"odinger equation is easily seen to satisfy bounds of the form
  \[
  | e^{it \Delta} (e^{2i\theta} f(r) )| \lesssim \frac{\la
    \frac{|t|-r}{1+\min(r,|t|)} \ra^{-N}}{\la t \ra}
  \]
  along with all its derivatives. Since $e^{it \Delta} (e^{2i\theta}
  f(r) )= e^{2i\theta} e^{itH^-} f $ this implies the bound
  \begin{equation}\label{fflow}
    | e^{itH^-} f|  \lesssim \frac{\la \frac{|t|-r}{1+\min(r,|t|)} \ra^{-N}}{\la t \ra}
  \end{equation}
  which holds for $e^{itH^-} f$ and all its $r$ derivatives.  We also
  have the representation
  \[
  e^{itH^-} f = H^- e^{itH^-} g
  \]
  where $e^{itH^-} g$ satisfies similar bounds.

  For such $f$ we now proceed to establish \eqref{pd2} and
  \eqref{ps2}.  Indeed, \eqref{ps2} follows directly from
  \eqref{fflow}. For \eqref{pd2} we use the above representation in
  terms of $g$. Integrating by parts we have
  \[
  \begin{split}
    r \int_r^\infty \frac{e^{itH^-} f}{s} ds = &\ r \int_r^\infty H^-
    (e^{itH^-} g) \cdot \frac1{s^2} s ds
    \\
    = &\ - 2 r^{-1} e^{itH^-} g(r) - \partial_r ( e^{itH^-} g(r)) + r
    \int_r^\infty e^{itH^-} g \ H^{-} \left(\frac{1}{s^2}\right) s ds
    \\
    = &\ - 2 r^{-1} e^{itH^-} g(r) - \partial_r ( e^{itH^-} g(r))
  \end{split}
  \]
  The last expression is easily seen to decay uniformly like $t^{-1}$
  in view of \eqref{fflow} applied to $g$.

\end{proof}

\end{document}